\date{}
\newtheorem{theorem}{Theorem}[section]
\newtheorem{lemma}[theorem]{Lemma}
\newtheorem{proposition}[theorem]{Proposition}
\theoremstyle{definition}
\newtheorem{remark}[theorem]{\bf Remark}
\begin{document}
\title{\bf
{Normalized solutions to focusing Sobolev critical biharmonic Schr\"{o}dinger equation with mixed dispersion}}
\author{{Jianlun Liu$^1$\footnote{E-mail address: jianlunliumath@163.com (J.L. Liu)},\ \ Hong-Rui Sun$^1$\footnote{Corresponding author. E-mail address: hrsun@lzu.edu.cn (H.R. Sun)}\ \ and\ \ Ziheng Zhang$^2$\footnote{E-mail address: zhzh@mail.bnu.edu.cn (Z.H. Zhang)}}\\
{\small \emph{$^1$School of Mathematics and Statistics,\  Lanzhou University,\ Lanzhou {\rm730000},}}\\
{\small \emph{People's Republic of China}}\\
{\small \emph{$^2$School of Mathematical Sciences,\  Tiangong University,\ Tianjin {\rm300387},}}\\
{\small \emph{People's Republic of China}}
}
\maketitle
\baselineskip 17pt

{\bf Abstract}: This paper is concerned with the following focusing biharmonic Schr\"{o}dinger equation with mixed dispersion and Sobolev critical growth:
$$
\begin{cases}
	{\Delta}^2u-\Delta u-\lambda u-\mu|u|^{p-2}u-|u|^{4^*-2}u=0\ \ \mbox{in}\ \mathbb{R}^N, \\[0.1cm]
	\int_{\mathbb{R}^N} u^2 dx = c,
\end{cases}		
$$
where $N \geq 5$, $\mu,c>0$, $2<p<4^*:=\frac{2N}{N-4}$ and $\lambda \in \mathbb{R}$ is a Lagrange multiplier. For this problem, under the $L^2$-subcritical perturbation ($2<p<2+\frac{8}{N}$), we derive the existence and multiplicity of normalized solutions via the truncation technique, concentration-compactness principle and the genus theory presented by C.O. Alves et al. (Arxiv, (2021), doi: 2103.07940v2). Compared to the results of C.O. Alves et al. we obtain a more general result after removing the further assumptions given in (3.2) of their paper. In the case of $L^2$-supercritical perturbation ($2+\frac{8}{N}<p<4^*$), we explore the existence results of normalized solutions by applying the constrained variational methods and the mountain pass theorem. Moreover, we propose a novel method to address the effects of the dispersion term $\Delta u$. This approach allows us to extend the recent results obtained by X. Chang et al. (Arxiv, (2023), doi: 2305.00327v1) to the mixed dispersion situation.

\textbf{Key words and phrases.} Biharmonic Schr\"{o}dinger equation; Mixed dispersion; Normalized solutions; Sobolev critical growth.

\textbf{2020 Mathematics Subject Classification}. 35A15, 35J30, 35J35, 35J60.\\

\section{Introduction and main results}
In this paper, we investigate the following focusing biharmonic Schr\"{o}dinger equation with mixed dispersion and Sobolev critical growth:
\begin{equation}\label{eqn:BS-equation-L2-Super+Critical}
\begin{cases}
	{\Delta}^2u-\Delta u-\lambda u=\mu|u|^{p-2}u+|u|^{4^*-2}u\ \ \mbox{in}\ \mathbb{R}^N, \\[0.1cm]
	\int_{\mathbb{R}^N} u^2 dx = c,
\end{cases}
\end{equation}
where $N \geq 5$, $\mu,c>0$, $2<p<4^*:=\frac{2N}{N-4}$ and $\lambda \in \mathbb{R}$ is a Lagrange multiplier. The interest in studying problem (\ref{eqn:BS-equation-L2-Super+Critical}) comes from seeking standing waves with the form $\psi(t,x)=e^{-i\lambda t}u(x)$ to the following time-dependent biharmonic nonlinear Schr\"{o}dinger (BNLS) equation:
\begin{equation}\label{eqn:intial-equation}
i\partial_{t}\psi-\gamma\Delta^2 \psi+\beta\Delta\psi+\kappa f(|\psi|)\psi=0,
\end{equation}
where $\gamma>0$, $\beta,\kappa\in\mathbb R$, $f=\mu|\psi|^{p-2}\psi+|\psi|^{4^*-2}\psi$, $\mu>0$ and $i$ denotes the imaginary unit. This equation was proposed in \cite{Karpman1996,Karpman-Shagalov2000}, which considered the role of a small fourth-order dispersion term during the propagation of intense laser beams in a bulk medium with Kerr nonlinearity. The biharmonic operator $\Delta^2$ was also used to reveal the effects of higher-order dispersion terms in the mixed-dispersion fourth-order Schr\"{o}dinger equations. Actually, the case $\beta=0$ in (\ref{eqn:intial-equation}) was considered earlier, in \cite{Ivanov-Kosevich1983,Turitsyn1985}, in the context of the stability of solitons in magnetic materials when the effective quasi-particle mass becomes infinite. The case $\beta<0$ arises in the approximation to the vectorial nonlinear Helmholtz equation as a nonparaxial correction to the second-order nonlinear Schr\"{o}dinger equation; while the case $\beta>0$ relates to faster transmission in optical fiber arrays, see \cite{Fibich-Ilan-Papaniclaou2002,Miao-Xu-Zhao2009,Pausader2009,Zhang-Zheng2010} and the references listed therein for more physical backgrounds. Moreover, from the physical standpoint, the real number $\kappa$ refers to the defocusing versus focusing regime, that is, $\kappa=-1$ corresponds to the defocusing problem, while $\kappa=1$ stands for the focusing problem.

During the process of research, many people focus on finding standing waves solutions to (\ref{eqn:intial-equation}) with a prescribed mass, since the mass may have specific meanings, such as the total number of atoms in Bose-Einstein condensation, or
the power supply in nonlinear optics. These solutions are commonly called normalized solutions, which provide valuable insights into dynamical properties of stationary solutions, such as the stability or instability of orbits. To this issue, it is
well known that the number $\bar{p}:=2+\frac{8}{N}$ plays an important role in studying normalized solutions of biharmonic Schr\"{o}dinger equation, which is called the $L^2$-critical (or mass-critical) exponent with respect to $p$. Indeed, from the variational point of view, if the problem is $L^2$-subcritical (also called mass-subcritical), i.e., $2 <p < \bar{p}$, the associated functional is bounded from below on mass constraint manifold. Conversely, for the $L^2$-supercritical (or mass-supercritical) situation, i.e., $\bar{p}<p<4^*$, the functional is unbounded below. Here, we refer the readers to \cite{Bellazzini-Visciglia2010,Bonheure-Casteras-Gou-Jeanjean2019,Bonheure-Casteras-Santos-Nascimento2018,Boussaid-Fernandez-Jeanjean2019,
Chang-Hajaiej-Ma-Song2023,Fernandez-Jeanjean-Mandel-Maris2022,Liu-Zhang2023,Luo-Yang2023,
Luo-Zhang2022,Luo-Zheng-Zhu2023,Ma-Chang2022,Ma-Chang-Feng2024,Phan2018,Zhang-Wang2024,Zhu-Zhang-Yang2010} to learn about the latest developments of the existence of normalized solutions for the type of problem (\ref{eqn:BS-equation-L2-Super+Critical}).

However, as pointed out in \cite{Luo-Zhang2022}, the main difference involving $\beta$ is as follows: for $\beta\neq 0$, both $\|\Delta u\|^2_2$ and $\|\nabla u\|^2_2$ affect the integral $\int_{\mathbb R^N}F(u)dx$, where $F(u):=\int^{u}_{0}f(s)ds$; for $\beta=0$, the main effect on the integral $\int_{\mathbb R^N}F(u)dx$ is the semi-norm $\|\Delta u\|^2_2$. Therefore, in order to avoid discussing the influence of $\|\nabla u\|^2_2$, the cases of $\gamma=1$ and $\beta= 0$ in (\ref{eqn:intial-equation}) have received a lot of attention. Specifically speaking, Bellazzini and {Visciglia} \cite{Bellazzini-Visciglia2010} investigated the
following minimization problem
\begin{equation}\label{eqn:Minimization-problem-VQ}
\inf_{H^2(\mathbb{R}^N)\cap S(c)}\Bigl(\frac{1}{2}\int_{\mathbb{R}^N}|\Delta u|^2dx+\frac{1}{2}\int_{\mathbb{R}^N}V(x)|u|^2dx-\frac{1}{p}\int_{\mathbb{R}^N}Q(x)|u|^{p}dx\Bigr)
\end{equation}
where
\begin{equation}\label{eqn:Defn-Sc}
S(c):=\{u \in H^2(\mathbb{R}^N):\|u\|^2_2=c\}.
\end{equation}
Obviously, searching for minimizers of minimization problem (\ref{eqn:Minimization-problem-VQ}) is equivalent to investigating the existence of ground state normalized solutions to the following problem
\begin{equation}\label{eqn:BS-F-equation}
\begin{cases}
	{\Delta}^2u-\lambda u+V(x)u=Q(x)|u|^{p-2} \ \ \mbox{in}\ \mathbb{R}^N, \\[0.1cm]
	\int_{\mathbb{R}^N} u^2 dx = c.
\end{cases}
\end{equation}
Assuming that $2<p<\bar{p}$, $V, Q\in L^\infty(\mathbb{R}^N)$, $Q(x)\geq 0$ a.e. $x\in \mathbb{R}^N$, and there exists $\lambda_0>0$ such that
$0<meas\{Q(x)>\lambda_0\}<\infty$, they showed that the set of minimizers corresponding to problem (\ref{eqn:Minimization-problem-VQ}) is a nonempty compact set and is orbitally stable for some $c_0>0$ and $c\geq c_0$.
Especially, when $V(x)=0$ and $Q(x)=1$, the existence of a minimizer for problem (\ref{eqn:Minimization-problem-VQ}) is obtained for every $c>0$. Later Phan \cite{Phan2018} investigated the following problem with $L^2$-critical nonlinearity
\begin{equation}\label{bSV-equation}
\begin{cases}
	{\Delta}^2u-\lambda u+V(x)u=a |u|^{\frac{8}{N}}u\ \ \mbox{in}\ \mathbb{R}^N, \\[0.1cm]
	\int_{\mathbb{R}^N} u^2 dx = 1,
\end{cases}
\end{equation}
where the parameter $a>0$ stands for the strength of the attraction of the system and $V(x)$ stands for an external potential satisfying either
\begin{itemize}
\item[$(V1)$] $V\geq 0$, $V(x)\rightarrow \infty$ as $|x|\rightarrow \infty$ or
\item[$(V2)$] ${\rm ess} \inf V\leq 0$, $\min\{V,0\}\in L^{p_1}(\mathbb{R}^N)+L^{p_2}(\mathbb{R}^N)$ and $\max\Bigl\{1,\frac{N}{4}\Bigr\}<p_1<p_2<\infty$.
\end{itemize}
Under the conditions presented above, the author revealed that there exists a constant $a_* \in (0, a^*)$ such that for all
$a_* < a < a^*<\infty$, problem (\ref{bSV-equation}) admits at least one ground state normalized solution, where $a^*$ is the optimal constant in the
Gagliardo-Nirenberg interpolation inequality. Recently, we notice that Ma and Chang \cite{Ma-Chang2022} dealt with the existence of ground state normalized solutions to the biharmonic Schr\"{o}dinger equation with the combination of $L^2$-subcritical nonlinearity and Sobolev critical growth:
\begin{equation}\label{eqn:BS-critical+L2-equation}
\begin{cases}
	{\Delta}^2u-\lambda u = \mu |u|^{p-2}u+|u|^{4^{*}-2}u\ \ \mbox{in}\ \mathbb{R}^N, \\[0.1cm]
	\int_{\mathbb{R}^N} u^2 dx = c,  \\[0.1cm]
\end{cases}
\end{equation}
where $N\geq 5$, $\mu,c>0$, $2<p<\bar{p}$. Actually, the authors showed that, for any $\mu>0$, there exist $c_0=c_0(\mu,p,N) > 0$ and $\rho_0=\rho(c_0)>0$ such that problem (\ref{eqn:BS-critical+L2-equation}) has a ground state normalized solution $u$ on $S(c)$ satisfying $\|\Delta u\|_2^2\leq \rho_0$, when $c$ belongs to $(0, c_0)$. Based on \cite{Ma-Chang2022}, Liu and Zhang \cite{Liu-Zhang2023} extended the results to $L^2$-supercritical perturbation, namely, they used the corresponding mountain-pass level to verify the $(PS)$ condition, and got the existence of normalized solutions when $\mu$ is sufficiently large. In addition, the asymptotic behavior of the energy to the mountain-pass solution as $\mu\to\infty$ and $c\to+\infty$ were also discussed. At the same time, after constructing an appropriate test function and making some energy estimates, Chang et al. \cite{Chang-Hajaiej-Ma-Song2023} proved the existence of normalized ground state solutions for problem (\ref{eqn:BS-critical+L2-equation}) when $\bar{p}\leq p<4^*$ and
showed that all ground states correspond to the local minima of the associated energy functional restricted to the Poho\v{z}aev set for any $\mu>0$. Moreover, they also revealed that the standing waves are strongly unstable by blowup for cases $\bar{p}\leq p<4^*$. If $f$ in (\ref{eqn:intial-equation}) is the general mass supercritical nonlinearity, Zhang and Wang \cite{Zhang-Wang2024} considered the following problem:
\begin{equation}\label{eqn:BS-critical+L2-equation-general}
\begin{cases}
	{\Delta}^2u-\lambda u = g(u)\ \ \mbox{in}\ \mathbb{R}^N, \\[0.1cm]
	\int_{\mathbb{R}^N} u^2 dx = c,  \\[0.1cm]
\end{cases}
\end{equation}
where $N\geq 5$, $c>0$ and the nonlinearity $g$ satisfies:
\begin{itemize}
\item[$(G1)$] $g: \mathbb{R} \rightarrow \mathbb{R}$ is continuous and odd;
\item[$(G2)$] there exist $2+\frac{8}{N}<\alpha \leq \beta<4^*:=\frac{2 N}{N-4}$
such that
$$
0<\alpha G(s) \leq g(s) s \leq \beta G(s),\,\, \forall s \neq 0,
$$
where $G(s)=\int_0^s g(t) dt$;
\item[$(G3)$] the function defined by $\widetilde{G}(s):=\frac{1}{2} g(s) s-G(s)$ is of class $C^1$ and
$$
\widetilde{G}^{\prime}(s) s \geq \alpha \widetilde{G}(s),\,\, \forall s \in \mathbb{R},
$$
where $\alpha$ is given in $(G2)$.
\end{itemize}
After showing that the new constraint is natural and verifying the compactness of the minimizing sequence, they obtained the existence
of normalized ground state solutions.

With regard to more general biharmonic Schr\"{o}dinger equation, mainly involving the mixed dispersion, many authors also focused their attention on this type of equations, that is, $\beta\neq 0$ in (\ref{eqn:intial-equation}). Explicitly, Bonheure et al. \cite{Bonheure-Casteras-Santos-Nascimento2018}
obtained the existence of normalized solutions as energy minimizers (ground states) for the following problem:
\begin{equation}\label{eqn:parameter-equation}
\begin{cases}
	\gamma\Delta^2 u-\beta \Delta u- \lambda u=|u|^{p-2}u\ \ \mbox{in}\ \mathbb{R}^N, \\[0.1cm]
	\int_{\mathbb{R}^N} u^2 dx = c,
\end{cases}	
\end{equation}
where $\gamma>0$, $\beta>0$ and $2<p<\bar{p}$. While for the case $\bar{p}<p\leq 4^*$, by using
a minimax principle based on the homotopy stable family, Bonheure et al. obtained the existence of ground state solutions, radial positive solutions, and the
multiplicity of radial solutions for problem (\ref{eqn:parameter-equation}) with $\gamma>0$ and $\beta=1$ in \cite{Bonheure-Casteras-Gou-Jeanjean2019}.
Assuming that $\gamma=c=1, \beta \in \mathbb{R}$ and $2<p \leq \bar{p}$, Luo et al.\cite{Luo-Zheng-Zhu2023} studied the minimization problem
\begin{equation}\label{eqn:m-problem}
m(c,\gamma,\beta):=\inf _{u \in S(c)} E_{\gamma,\beta}(u)
\end{equation}
by using the profile decomposition of bounded sequences in $H^2(\mathbb R^N)$ established in \cite{Zhu-Zhang-Yang2010}, where
$$
E_{\gamma,\beta}(u):=\frac{\gamma}{2}\|\Delta u\|^2_2+\frac{\beta}{2}\|\nabla u\|^2_2-\frac{1}{p}\|u\|^p_p.
$$
They revealed that $m(c,\gamma,\beta)$ is achieved in five cases:
\begin{itemize}
\item[$(i)$] $c=1$, $2<p<2+\frac{4}{N}$ and $\beta \in(0,\infty)$;
\item[$(ii)$] $c=1$, $2+\frac{4}{N} \leq p<\bar{p}$ and $\beta \in(0,\beta_0)$ for some $\beta_0>0$;
\item[$(iii)$] $c=1,2<p<\bar{p}$ and $\beta=0$;
\item[$(iv)$] $c=1$, $2<p<\bar{p}$ and $\beta \in[-\hat{\beta}_0,0)$ for some $\hat{\beta}_0>0$;
\item[$(v)$] $0<c<c_N$, $p=\bar{p}$ and $\beta \in[-\tilde{\beta}_0,0)$ for some $c_N$, $\tilde{\beta}_0>0$.
\end{itemize}
In \cite{Boussaid-Fernandez-Jeanjean2019}, Boussa\"{\i}d et al. restudied problem (\ref{eqn:m-problem}) with $\gamma>0$, $\beta<0$ and $2<p \leq \bar{p}$ and improved the results of \cite{Luo-Zheng-Zhu2023} by relaxing the extra restriction on $c$ and $\beta$. Actually, in \cite{Luo-Zheng-Zhu2023} there are an explicit upper bound on $c>0$ and a lower bound on $\beta<0$ as above, after ruling out the vanishing of the minimizing
sequences, Boussa\"{\i}d et al. \cite{Boussaid-Fernandez-Jeanjean2019} solved (\ref{eqn:m-problem}) for all $c>0$ and
$\beta<0$ when $2<p \leq \bar{p}$. Subsequently, when $\gamma=1$, $\beta<0$ and $2+\frac{4}{N}<p<\min\{4,\ 4^*\}$, choosing
\begin{equation}\label{eqn:H2r-define}
H^2_{r}(\mathbb{R}^N):=\{u\in H^2(\mathbb{R}^N):u\ \mbox{is radially decreasing}\}
\end{equation}
as the energy space, Luo and Yang \cite{Luo-Yang2023} proved the existence of two solutions of
(\ref{eqn:parameter-equation}) for $c$ sufficiently small, where the first one is a local minimizer, and the second one is a mountain-pass type solution. Later, Fern\'{a}ndez et al. \cite{Fernandez-Jeanjean-Mandel-Maris2022} established non-homogeneous Gagliardo-Nirenberg-type inequalities in
$\mathbb R^N$, used them to study standing waves minimizing the energy and proved optimal results on the existence of minimizers in the $L^2$-subcritical and
$L^2$-critical cases if $\gamma=1$ and $\beta=-2$. Meanwhile, in the $\bar{p}<p<\min\{4,4^*\}$ case a local minimizers exist. However, if the Laplacian and the
bi-Laplacian in the equation have the same sign, they are able to show the existence of local minimizers, which is also one mountain-pass solution. For the convenience, we summarise above literature for problem (\ref{eqn:parameter-equation}) in the following table.
\begin{table}[htbp]\label{1,2}
\centering
\caption{Existing results for (\ref{eqn:parameter-equation})}
\begin{tabular}{|c|c|c|c|c|}
\hline
$\gamma$             & $\beta$                & $p$                                  & Type of solutions                                   & Reference \\ \hline
$\gamma>0$           & $\beta\geq0$              & $2<p<\bar{p}$                  & A ground state                                      & Bonheure et al. \cite{Bonheure-Casteras-Santos-Nascimento2018}       \\ \hline
$\gamma>0$           & $\beta=1$              & $\bar{p}\leq p<4^*$            & \begin{tabular}[c]{@{}c@{}}A ground state and \\ infinitely radial solutions\end{tabular}      & Bonheure et al. \cite{Bonheure-Casteras-Gou-Jeanjean2019}       \\ \hline
$\gamma=1$           & \begin{tabular}[c]{@{}c@{}} $\beta>0$\\$0<\beta<\beta_0$\\$\beta=0$\\ $-\hat{\beta}_0\leq \beta <0$\\$-\tilde{\beta}_0\leq \beta <0$\end{tabular}
& \begin{tabular}[c]{@{}c@{}}$2<p<2+\frac{4}{N}$\\$2+\frac{4}{N}<p<\bar{p}$\\$2<p<\bar{p}$\\$2<p<\bar{p}$\\$p=\bar{p}$  \end{tabular}
& A ground state                                      & Luo et al.\cite{Luo-Zheng-Zhu2023}      \\ \hline
$\gamma>0$           & $\beta<0$              & $2<p<\bar{p}$                  & A ground state                                      & Boussa\"{\i}d et al. \cite{Boussaid-Fernandez-Jeanjean2019}       \\ \hline
$\gamma=1$           & $\beta<0$              & $\bar{p}<p<\min\{4,4^*\}$                & \begin{tabular}[c]{@{}c@{}}A local minimizer and a \\ mountain-pass type solution\end{tabular} & Luo and Yang \cite{Luo-Yang2023}      \\ \hline
$\gamma=1$           & $\beta=-2$             &\begin{tabular}[c]{@{}c@{}}$2<p\leq \bar{p}$\\ $\bar{p}<p<\min\{4,4^*\} $ \end{tabular}           & \begin{tabular}[c]{@{}c@{}} A ground state \\ A local minimizer  \end{tabular}               & Fern\'{a}ndez et al. \cite{Fernandez-Jeanjean-Mandel-Maris2022}       \\ \hline
\end{tabular}
\end{table}

For more general nonlinearity, we notice that Luo and Zhang \cite{Luo-Zhang2022} investigated the following problem:
\begin{equation}\label{eqn:BS-beta-gamma-general-Luo-Zhang}
\left\{\begin{array}{ll}
\gamma\Delta^2 u-\beta \Delta u-\lambda u=g(u) \,\,\,\, \mbox{in}\,\,\, \mathbb{R}^{N}, \\[0.2cm]
\int_{\mathbb{R}^N}|u|^2 dx=c,
\end{array}\right.
\end{equation}
where $\gamma>0$ and $\beta\in \mathbb{R}$. Assuming that $g(u)$ is subcritical growth at infinity and satisfies some other suitable assumptions, they showed the existence, nonexistence and stability of normalized solutions to problem (\ref{eqn:BS-beta-gamma-general-Luo-Zhang}) when $\beta\leq 0$ and $\beta>0$ respectively. It is obvious that the $L^2$-supercritical growth neither of the forms $|u|^{p-2}u$ nor their combination nonlinearities $|u|^{p-2}u+|u|^{q-2}u$ is not involved. For the mixed dispersion nonlinear Schr\"{o}dinger equation with combined power-type nonlinearities, that is,
\begin{equation}\label{eqn:BS-beta-gamma-general-Ma-Chang-Feng}
\left\{\begin{array}{ll}
\gamma\Delta^2 u-\beta \Delta u-\lambda u=\mu|u|^{p-2}u+|u|^{4^*-2}u \,\,\,\, \mbox{in}\,\,\, \mathbb{R}^{N}, \\[0.2cm]
\int_{\mathbb{R}^N}|u|^2 dx=c,
\end{array}\right.
\end{equation}
where $N\geq 5$, $\gamma,\beta,\mu>0$ and $2<p<2+\frac{4}{N}$, Ma et al. \cite{Ma-Chang-Feng2024} established the existence of ground state that corresponds to a local minimum of the associated functional by analyzing the subadditivity of the ground state energy with respect to the prescribed mass and employing a constrained minimization method. Under certain conditions, via studying the monotonicity of ground state energy as the mass varies, they also proved the existence of normalized ground state solutions for problem (\ref{eqn:BS-beta-gamma-general-Ma-Chang-Feng}). To the best of our knowledge, Ma et al. \cite{Ma-Chang-Feng2024} only consider the existence of ground state normalized solution for (\ref{eqn:BS-beta-gamma-general-Ma-Chang-Feng}) when $2<p<2+\frac{4}{N}$, the multiplicity of normalized solutions of (\ref{eqn:BS-beta-gamma-general-Ma-Chang-Feng}) in such case is not addressed. Motivated by the above discussion, we are intended to explore the multiplicity of normalized solutions of (\ref{eqn:BS-beta-gamma-general-Ma-Chang-Feng}) when $2<p<2+\frac{4}{N}$. Moreover, we also consider the existence of normalized solution to problem (\ref{eqn:BS-beta-gamma-general-Ma-Chang-Feng}) in $L^2$-supercritical case.

For convenience, we take $\gamma=\beta=1$ in (\ref{eqn:BS-beta-gamma-general-Ma-Chang-Feng}), then problem (\ref{eqn:BS-beta-gamma-general-Ma-Chang-Feng}) becomes (\ref{eqn:BS-equation-L2-Super+Critical}). It is well known that to obtain this type (weak) solutions for problem (\ref{eqn:BS-equation-L2-Super+Critical}) is equivalent to search for critical points of the $C^1$ functional $I:H^2(\mathbb R^N)\to\mathbb R$:
\begin{equation}\label{eqn:Defn-Iu}
I(u):=\frac{1}{2}\int_{\mathbb{R}^N}|\Delta u|^2 dx+\frac{1}{2}\int_{\mathbb{R}^N}|\nabla u|^2 dx-\frac{\mu}{p}\int_{\mathbb{R}^N}|u|^p dx-\frac{1}{4^*}\int_{\mathbb{R}^N}|u|^{4^*}dx,\ \  u\in H^2({\mathbb{R}^N})
\end{equation}
restricted to the constraint
\begin{equation}\label{eqn:Sc}
S(c):=\Bigl\{u\in H^2(\mathbb{R}^N):\int_{\mathbb{R}^N}|u|^2dx=c\Bigr\}.
\end{equation}
At this moment, we can state our main result as follows.

\begin{theorem}\label{Thm:normalized-Subcritical-solutions-existence}
Let $\mu>0$, $N\geq 5$ and $2<p<2+\frac{4}{N}$. For any $0<c<c_*$, $I|_{V_r(c)}$ possesses a critical point
$u$ with $I(u)<0$. Moreover, the corresponding Lagrange multiplier $\lambda<0$, where $I$, $c_*$ and $V_r(c)$ are given in (\ref{eqn:Defn-Iu}), (\ref{eqn:Defn-c*}) and (\ref{eqn:Vr0-define}), respectively.
\end{theorem}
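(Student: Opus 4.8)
The plan is to obtain $u$ as a minimizer of $I$ on the truncated radial set $V_r(c)$ and to show that this minimizer lies in the interior $\{\|\Delta u\|_2^2<\rho_0\}$, so that it is a genuine critical point of $I|_{S(c)}$; the strict negativity $I(u)<0$ and the sign $\lambda<0$ then fall out of the variational identities. Throughout I use the Gagliardo--Nirenberg inequality $\|u\|_p^p\le C_{N,p}\|\Delta u\|_2^{\frac{N(p-2)}{4}}\|u\|_2^{\,p-\frac{N(p-2)}{4}}$ and the Sobolev inequality, with $S>0$ the best constant in $\|u\|_{4^*}^{2}\le S^{-1}\|\Delta u\|_2^{2}$. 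On $S(c)$ these give
\[
I(u)\ \ge\ \tfrac12\|\Delta u\|_2^2+\tfrac12\|\nabla u\|_2^2-\tfrac{\mu C_{N,p}}{p}\,c^{\frac p2-\frac{N(p-2)}8}\bigl(\|\Delta u\|_2^2\bigr)^{\frac{N(p-2)}8}-\tfrac{1}{4^*S^{4^*/2}}\bigl(\|\Delta u\|_2^2\bigr)^{\frac{4^*}{2}} .
\]
Since $2<p<2+\frac4N$ forces $\frac{N(p-2)}8<\frac12$ while $\frac{4^*}2>1$, this right-hand side is bounded below on $V_r(c)$ (where $\|\Delta u\|_2^2\le\rho_0$), so $m_r(c):=\inf_{V_r(c)}I>-\infty$. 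Choosing $\rho_0$ small enough that $\tfrac{1}{4^*S^{4^*/2}}\rho_0^{4^*/2-1}\le\tfrac14$ (in particular $\rho_0<S^{N/4}$) and then $c_*$ small enough that the subcritical term is $\le\tfrac14\rho_0$ on $\{\|\Delta u\|_2^2=\rho_0\}$, I obtain $I\ge0$ on that boundary for $0<c<c_*$.

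On the other hand, the mass-preserving rescaling $u_t(x)=t^{N/2}u(tx)$ satisfies $\|\Delta u_t\|_2^2=t^4\|\Delta u\|_2^2\to0$ and
\[
I(u_t)=\tfrac{t^4}2\|\Delta u\|_2^2+\tfrac{t^2}2\|\nabla u\|_2^2-\tfrac{\mu t^{\frac{N(p-2)}2}}{p}\|u\|_p^p-\tfrac{t^{\frac{4N}{N-4}}}{4^*}\|u\|_{4^*}^{4^*},
\]
whose lowest power $\frac{N(p-2)}2<2$ carries a negative sign; hence $I(u_t)<0$ and $u_t\in V_r(c)$ for small $t$, giving $m_r(c)<0$. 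I would then extract a minimizing sequence $\{u_n\}\subset V_r(c)$ and, via Ekeland's principle on the open set $V_r(c)$, upgrade it to a Palais--Smale sequence for $I|_{S(c)}$, so that $I'(u_n)-\lambda_nu_n\to0$ in $H^{-2}$. As $V_r(c)$ is bounded in $H^2$, up to a subsequence $u_n\rightharpoonup u$ in $H^2_{\mathrm{rad}}(\mathbb R^N)$ and $\lambda_n\to\lambda$ (testing with $u_n$ bounds $\lambda_n$). The compact embedding $H^2_{\mathrm{rad}}(\mathbb R^N)\hookrightarrow L^q(\mathbb R^N)$, $2<q<4^*$, gives $u_n\to u$ in $L^p$ and a.e., so $u$ solves $\Delta^2u-\Delta u-\lambda u=\mu|u|^{p-2}u+|u|^{4^*-2}u$ weakly. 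Vanishing is excluded at once: if $u=0$ then $\|u_n\|_p\to0$ and the choice of $\rho_0$ yields $\tfrac1{4^*}\|u_n\|_{4^*}^{4^*}\le\tfrac14\|\Delta u_n\|_2^2$, whence $\liminf I(u_n)\ge0>m_r(c)$, a contradiction. Thus $u\neq0$, and the boundary estimate forces $\|\Delta u\|_2^2<\rho_0$, i.e. $u\in V_r(c')$ with $c':=\|u\|_2^2\in(0,c]$.

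It remains to prove $c'=c$ and $u_n\to u$ in $H^2$. With $v_n:=u_n-u\rightharpoonup0$, the Brezis--Lieb lemma together with $\|u_n\|_p^p\to\|u\|_p^p$ gives
\[
m_r(c)=I(u)+\lim_{n\to\infty}\Bigl(\tfrac12\|\Delta v_n\|_2^2+\tfrac12\|\nabla v_n\|_2^2-\tfrac{1}{4^*}\|v_n\|_{4^*}^{4^*}\Bigr).
\]
Since $\|\Delta v_n\|_2^2\le\rho_0$, the Sobolev inequality and $\rho_0<S^{N/4}$ make the bracket nonnegative, so $m_r(c)\ge I(u)\ge m_r(c')$. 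The dichotomy is then closed by the strict monotonicity $m_r(c)<m_r(c')$ for $c'<c$, which I would establish through a subadditivity/scaling lemma for $c\mapsto m_r(c)$; this forces $c'=c$, the bracket to vanish, and (again by $\rho_0<S^{N/4}$) $\|\Delta v_n\|_2,\|\nabla v_n\|_2\to0$, so $u_n\to u$ in $H^2$ and $I(u)=m_r(c)<0$. Finally, from $\langle I'(u),u\rangle=\lambda c$ and $I(u)=m_r(c)$ one computes
\[
\lambda c=2\,m_r(c)+\mu\Bigl(\tfrac2p-1\Bigr)\|u\|_p^p+\Bigl(\tfrac{2}{4^*}-1\Bigr)\|u\|_{4^*}^{4^*}<0,
\]
since $m_r(c)<0$, $p>2$ and $4^*>2$; hence $\lambda<0$. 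I expect the main obstacle to be precisely this compactness step: simultaneously excluding the loss of $L^2$-mass (which in the radial setting reduces to the strict subadditivity of $m_r$, and is made delicate by the presence of the dispersion term $\|\nabla u\|_2^2$ in the scaling analysis) and ruling out concentration in the Sobolev-critical term (controlled by confining $\|\Delta u\|_2^2$ below the bubbling threshold $S^{N/4}$), all while preserving $m_r(c)<0$ under one common choice of $\rho_0$ and $c_*$.
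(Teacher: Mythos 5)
Your setup --- the truncated radial set, boundedness of $I$ from below on it, $m_r(c)<0$ via the mass-preserving scaling $t^{N/2}u(tx)$, Ekeland's principle to produce a Palais--Smale sequence, nonvanishing of the weak limit from the strictly negative level, the boundary estimate keeping the limit in the interior, and the final multiplier identity $\lambda c = 2I(u)+\mu\bigl(\tfrac{2}{p}-1\bigr)\|u\|_p^p+\bigl(\tfrac{2}{4^*}-1\bigr)\|u\|_{4^*}^{4^*}<0$ --- parallels the paper's Lemmas \ref{Lem:fcr-maximum}, \ref{Lem:Ju-bound} and the first part of Lemma \ref{Lem:local-ps-satisfies}. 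The genuine gap is the step you yourself flag as the main obstacle: you close compactness by invoking strict monotonicity $m_r(c)<m_r(c')$ for $c'<c$, "to be established through a subadditivity/scaling lemma," but this lemma is never proved, and in the present setting it is not a routine fix. The natural mass-increasing competitors ($u\mapsto\sqrt{\theta}\,u$, or $u\mapsto u(\theta^{-1/N}x)$ with $\theta=c/c'>1$) increase $\|\Delta u\|_2^2$ by factors $\theta$, respectively $\theta^{1-4/N}>1$, so the scaled function may exit the truncated set; and the usual alternative for strict subadditivity --- splitting mass and translating a piece to infinity --- is unavailable in the radial class $H^2_r(\mathbb R^N)$. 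Your Brezis--Lieb bracket correctly excludes energy-carrying bubbles (because $\rho_0<S^{N/4}$), but it cannot see $L^2$-mass escaping to infinity, which is exactly the dichotomy the missing lemma was supposed to rule out. As written, the proof is therefore incomplete at the decisive point.

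The paper closes compactness by a different, self-contained route that you could graft onto your argument. First, from the Palais--Smale sequence it extracts multipliers $\lambda_n$ and shows, \emph{before} any strong convergence, that
\begin{equation*}
\lambda_n c = 2I(u_n)+\frac{(2-p)\mu}{p}\|u_n\|_p^p+\frac{2-4^*}{4^*}\|u_n\|_{4^*}^{4^*}+o_n(1)\le 2a+o_n(1),
\end{equation*}
hence $\lambda\le 2a/c<0$. Second, it rules out concentration of $|u_n|^{4^*}$ by the concentration--compactness principle of \cite{Alves-doO2002}: any Dirac mass would force $\limsup_n\bigl(\|\Delta u_n\|_2^2+\|\nabla u_n\|_2^2\bigr)\ge S^{N/4}$, contradicting the truncation $r_*^2<S^{N/4}$ (this is where the refined choice of $c_*$, your $\rho_0<S^{N/4}$, is used); then the radial decay $|u_n(x)|\le C|x|^{-(N-1)/2}$ upgrades $L^{4^*}_{loc}$ convergence to $u_n\to u$ in $L^{4^*}(\mathbb R^N)$. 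Third, with both nonlinear terms converging strongly, testing the Palais--Smale relation with $u_n$ and the limit equation with $u$ gives $\|\Delta u_n\|_2^2+\|\nabla u_n\|_2^2-\lambda\|u_n\|_2^2\to\|\Delta u\|_2^2+\|\nabla u\|_2^2-\lambda\|u\|_2^2$; since $-\lambda>0$, each of the three terms is weakly lower semicontinuous, so all of them converge separately and in particular $\|u_n\|_2^2\to\|u\|_2^2=c$. No monotonicity or subadditivity of $m_r$ is needed anywhere. Replacing your unproven monotonicity lemma by these three steps would complete your argument.
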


\begin{theorem}\label{Thm:normalized-Subcritical-solutions-infinity}
Assume that $N\geq 5$ and $2<p<2+\frac{4}{N}$, for given $m\in\mathbb N^+$, there exists $\mu_m>0$ such that $I|_{V_r(c)}$ admits at least $m$ couples $(u_j,\lambda_j)$ $(j=1,2,\ldots,m)$ of critical points when $\mu\geq\mu_m$ and $0<c<c_*$. Moreover, for all $j=1,2,\ldots,m$, there also hold
$$
I(u_j)<0,\ \ \|u_j\|^2_2=c\ \ \mbox{and}\ \ \lambda_j<0,
$$
where $I$, $c_*$ and $\mu_m$ are defined in (\ref{eqn:Defn-Iu}), (\ref{eqn:Defn-c*}) and Lemma \ref{Lem:some-properties} below. In particular, we also have
$$
I(u_m)\to 0^-\ \ \mbox{as}\ \ m\to+\infty.
$$
\end{theorem}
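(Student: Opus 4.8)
The plan is to realize the $m$ required critical points as genus-based minimax values of a truncated even functional on the radial constraint, recycling the compactness framework from the proof of Theorem~\ref{Thm:normalized-Subcritical-solutions-existence} and driving enough of these levels below zero by enlarging $\mu$.

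First I would work with the truncated functional $\widetilde I$ used for Theorem~\ref{Thm:normalized-Subcritical-solutions-existence}: replacing the Sobolev critical term by a smooth cut-off in $\|\Delta u\|_2^2$ beyond a threshold $\rho_0$ yields an even functional that agrees with $I$ on the small-norm region $V_r(c)=\{u\in S_r(c):\|\Delta u\|_2^2<\rho_0\}$, is bounded below on the radial constraint $S_r(c):=S(c)\cap H^2_{\mathrm{rad}}(\mathbb R^N)$, and is amenable to the Palais--Smale analysis. Since $\widetilde I$ is even and $S_r(c)$ is symmetric under $u\mapsto -u$, for each $k\in\mathbb N^+$ I set
\[
c_k:=\inf_{A\in\Sigma_k}\ \sup_{u\in A}\widetilde I(u),\qquad
\Sigma_k:=\{A\subset S_r(c):\ A=-A,\ A\ \text{compact},\ \gamma(A)\geq k\},
\]
where $\gamma$ is the Krasnoselskii genus. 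These form a nondecreasing sequence $c_1\leq c_2\leq\cdots$ bounded below by $\inf_{S_r(c)}\widetilde I>-\infty$.

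The decisive step is to make $c_1,\dots,c_m$ strictly negative. For fixed $m$ I would take $m$ radial functions with disjoint annular supports, normalize the unit sphere of their span onto $S_r(c)$ to obtain a compact symmetric set $A_m\in\Sigma_m$, and estimate $\widetilde I$ on it: the quadratic part $\tfrac12\|\Delta u\|_2^2+\tfrac12\|\nabla u\|_2^2$ is uniformly bounded there, while $-\tfrac{\mu}{p}\|u\|_p^p$ is bounded above by a negative multiple of $\mu$. Hence there is $\mu_m>0$ (the threshold of Lemma~\ref{Lem:some-properties}) with $\sup_{A_m}\widetilde I<0$ for $\mu\geq\mu_m$, forcing $c_k\leq\sup_{A_m}\widetilde I<0$ for all $k\leq m$; the restriction $2<p<2+\tfrac4N$ is exactly what keeps the quadratic terms from overwhelming the $L^p$ term.

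With negativity secured, I would verify the Palais--Smale condition for $\widetilde I|_{S_r(c)}$ at every negative level: the compact embedding $H^2_{\mathrm{rad}}(\mathbb R^N)\hookrightarrow L^p(\mathbb R^N)$ for $2<p<4^*$ disposes of the subcritical term, while the truncation confines negative-level sequences to $\{\|\Delta u\|_2^2\leq\rho_0\}$, a budget too small for a critical bubble to fit, so a concentration-compactness argument excludes vanishing and dichotomy and gives strong convergence. The genus multiplicity theorem of Alves et al. then makes each $c_k$ $(k=1,\dots,m)$ a critical value of $\widetilde I|_{S_r(c)}$, coincidences among them only raising the genus of the critical set, so at least $m$ distinct pairs $(u_j,\lambda_j)$ arise. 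As $\widetilde I(u_j)=c_j<0$, each $u_j$ lies in $V_r(c)$ and solves the untruncated problem with $\|u_j\|_2^2=c$; testing its Euler--Lagrange equation against $u_j$ and using $p,4^*>2$ gives $\lambda_j c\leq 2I(u_j)<0$, whence $\lambda_j<0$. Finally, the nondecreasing sequence $(c_k)$ is bounded above by $0$ and, by the Palais--Smale condition, cannot accumulate at a negative level (the critical set there would have infinite genus), so $c_k\to 0^-$ and therefore $I(u_m)\to 0^-$. The principal difficulty I anticipate is this Palais--Smale verification, specifically in showing that the truncation keeps negative-level sequences outside the concentration regime of the critical exponent so that the concentration-compactness dichotomy can be ruled out and compactness restored.
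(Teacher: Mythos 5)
Your proposal is correct in outline and, at its core, follows the same strategy as the paper: Krasnoselskii-genus minimax values for an even functional on the radial constraint, a symmetric test set built from $m$ radial functions with disjoint supports whose energy is pushed below zero by taking $\mu\geq\mu_m$, compactness at negative levels from the embedding $H^2_{r}(\mathbb R^N)\hookrightarrow L^p(\mathbb R^N)$ together with a concentration-compactness exclusion of bubbling, negativity of the Lagrange multipliers by testing the Euler--Lagrange equation (giving $\lambda_j c\leq 2I(u_j)<0$ exactly as in the paper), and the standard genus fact yielding $I(u_m)\to 0^-$. The one genuine difference is the packaging of the small-norm localization: you introduce an explicitly truncated even functional $\widetilde I$ defined on all of $S_r(c)$ (the route of Alves et al.), whereas the paper never truncates $I$; it restricts the untruncated functional to the open set $V_r(c)$ of (\ref{eqn:Vr0-define}), proves a local $(PS)_a$ condition there (Lemma \ref{Lem:local-ps-satisfies}), and feeds this into the Jeanjean--Lu theorem (Proposition \ref{Pro:minimax-theorem}) applied to symmetric subsets of $V_r(c)$ (Lemmas \ref{Lem:some-properties} and \ref{lem:Sigma-neq-emptyset}). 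Your truncation buys a globally defined, bounded-below even functional to which the abstract genus theorem applies verbatim, at the cost of having to check that negative-level Palais--Smale sequences and critical points fall back into the region where $\widetilde I=I$; the paper's restriction avoids the cutoff but must work on a non-closed subset. In either route the crux is the same quantitative fact, which you correctly flag as the anticipated difficulty but do not verify: the norm threshold must sit strictly below the $\|\Delta\cdot\|_2^2$-mass $S^{N/4}$ of a critical bubble. The paper establishes this in (\ref{eqn:remove-01})--(\ref{eqn:r0-gamma-s-relation}), showing that $c<c_*$ alone forces $r_*<S^{N/8}$, i.e. $r_*^2<S^{N/4}$, so no Dirac mass fits in the budget; this computation is precisely the improvement the paper claims over Alves et al., whose truncation scheme required an additional hypothesis (their (3.2)) at this very point. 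Your plan is therefore executable, but to be complete it must include that estimate relating the truncation threshold $\rho_0$ to $S$, rather than assuming the budget is ``too small for a bubble.''
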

\begin{remark}
{\rm
As we state above, Ma et al. \cite[Theorem 1.2]{Ma-Chang-Feng2024} discussed the existence of ground state normalized solutions for problem (\ref{eqn:BS-equation-L2-Super+Critical}). Inspired by the article \cite{Alves-Ji-Miyagaki2021}, we use a different approach from those in \cite{Ma-Chang-Feng2024} to obtain the existence of normalized solution to problem (\ref{eqn:BS-equation-L2-Super+Critical}) and also give the notation of its corresponding Lagrange multiplier with a novel method (see (\ref{eqn:lambda-leq-0-strict-L2sub}) below) in Theorem \ref{Thm:normalized-Subcritical-solutions-existence}. In contrast to \cite{Alves-Ji-Miyagaki2021}, we improve the methods of Alves et al. and remove the further constraints given in (3.2) therein, giving a more general result (see (\ref{eqn:remove-01})-(\ref{eqn:h-0-r-r0}) for more details) in Theorems \ref{Thm:normalized-Subcritical-solutions-existence} and \ref{Thm:normalized-Subcritical-solutions-infinity}.
}
\end{remark}

It is important to point out that Ma and Chang in \cite[Theorem 1.2]{Ma-Chang2022} only considered the existence of the ground state normalized solutions to problem (\ref{eqn:BS-critical+L2-equation}) in $L^2$-subcritical case, and the multiplicity of normalized solutions for this problem in such case is not addressed. Thus, repeating the procedure of Theorem \ref{Thm:normalized-Subcritical-solutions-infinity}, we arrive at the theorem below.

\begin{theorem}\label{Thm:normalized-Subcritical-solutions-infinity-beta=0}
Assume that $N\geq 5$ and $2<p<\bar{p}$, for given $k\in\mathbb N^+$, there exists $\mu_k>0$ such that $I_0|_{\Lambda_\rho(c)}$ admits at least $k$ couples $(u_j,\lambda_j)$ $(j=1,2,\ldots,k)$ of critical points when $\mu\geq\mu_k$ and $0<c<c_0$. Moreover, for all $j=1,2,\ldots,k$, there hold
$$
I_0(u_j)<0,\ \ \|u_j\|^2_2=c\ \ \mbox{and}\ \ \lambda_j<0.
$$
where $\Lambda_\rho(c)$ and $c_0$ are given in \cite[Theorem 1.2]{Ma-Chang2022} and
$$
I_0(u):=\frac{1}{2}\int_{\mathbb{R}^N}|\Delta u|^2 dx-\frac{\mu}{p}\int_{\mathbb{R}^N}|u|^p dx-\frac{1}{4^*}\int_{\mathbb{R}^N}|u|^{4^*}dx.
$$
\end{theorem}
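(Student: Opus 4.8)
The plan is to transcribe the argument of Theorem \ref{Thm:normalized-Subcritical-solutions-infinity} to the degenerate case $\beta=0$, replacing the working constraint $V_r(c)$ by the truncated set $\Lambda_\rho(c)$ of Ma and Chang \cite{Ma-Chang2022} and the threshold $c_*$ by their $c_0$. The absence of the dispersive term $\|\nabla u\|_2^2$ in fact streamlines the analysis: on $\Lambda_\rho(c)$ one has the a priori bound $\|\Delta u\|_2^2\le\rho$, and since the Sobolev embedding $H^2(\mathbb R^N)\hookrightarrow L^{4^*}(\mathbb R^N)$ gives $\|u\|_{4^*}^{4^*}\le S^{-4^*/2}\|\Delta u\|_2^{4^*}$ with $S$ the best Sobolev constant, for $\rho$ small the critical term is absorbed by $\tfrac12\|\Delta u\|_2^2$. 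Hence $I_0$ is bounded from below and coercive (in the $\|\Delta u\|_2$ norm) on $\Lambda_\rho(c)$, exactly as in the mixed-dispersion case.

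First I would build, following the improved truncation reused throughout this paper, an even auxiliary functional $\bar I_0$ on $S(c)$ that coincides with $I_0$ on $\Lambda_\rho(c)$, is bounded from below on the whole sphere, and satisfies $\bar I_0>0$ wherever $\|\Delta u\|_2^2$ is large; evenness is inherited from the oddness of the nonlinearity. Because the critical embedding is noncompact, I would then verify a local Palais--Smale condition for $\bar I_0$ at negative levels by a concentration--compactness / Brezis--Lieb splitting: a bounded $(PS)_d$ sequence either converges strongly or splits off a bubble carrying at least the critical energy $\tfrac{2}{N}S^{N/4}>0$; since all the min-max values below will be negative, hence well under that threshold, concentration is excluded and strong convergence follows, just as for $I$ in Theorem \ref{Thm:normalized-Subcritical-solutions-infinity}.

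With compactness in hand, I would run the Krasnoselskii genus machinery: setting
$$
c_j:=\inf_{A\in\Gamma_j}\ \sup_{u\in A}\bar I_0(u),\qquad \Gamma_j:=\bigl\{A\subset S(c):A\ \text{compact, symmetric},\ \gamma(A)\ge j\bigr\},
$$
the symmetric critical point theorem for bounded-below even functionals produces, for every $j$ with $c_j<0$, a pair $\pm u_j$ of critical points of $\bar I_0|_{S(c)}$ at level $c_j$; since $c_j<0$ these points lie inside $\Lambda_\rho(c)$, where $\bar I_0\equiv I_0$, so they solve the original problem. The decisive quantitative step, and the one I expect to be the main obstacle, is to guarantee $c_1\le\cdots\le c_k<0$. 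To this end, for each $j\le k$ I would take $j$ translated bumps with mutually disjoint supports, forming an odd $(j-1)$-sphere of admissible functions in $\Lambda_\rho(c)$ of genus $j$; along the \emph{spread-out} directions of this set the $L^p$-mass is smallest, so to keep the supremum negative there the subcritical term $\tfrac{\mu}{p}\|u\|_p^p$ must dominate $\tfrac12\|\Delta u\|_2^2$, which forces $\mu$ to be large. Choosing $\mu_k$ large enough makes $c_1,\dots,c_k$ all strictly negative simultaneously, yielding at least $k$ couples with $I_0(u_j)=c_j<0$ and $\|u_j\|_2^2=c$.

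Finally, the sign of each Lagrange multiplier follows from a direct algebraic identity rather than a Pohozaev argument. Testing the Euler--Lagrange equation $\Delta^2 u_j-\lambda_j u_j=\mu|u_j|^{p-2}u_j+|u_j|^{4^*-2}u_j$ against $u_j$ gives $\|\Delta u_j\|_2^2-\lambda_j c=\mu\|u_j\|_p^p+\|u_j\|_{4^*}^{4^*}$; substituting this into $2I_0(u_j)$ yields
$$
\lambda_j c=2I_0(u_j)-\mu\Bigl(1-\tfrac2p\Bigr)\|u_j\|_p^p-\Bigl(1-\tfrac{2}{4^*}\Bigr)\|u_j\|_{4^*}^{4^*}.
$$
Since $p,4^*>2$ and $I_0(u_j)<0$, every term on the right is strictly negative, so $\lambda_j<0$, completing the proof.
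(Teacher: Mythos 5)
Your overall skeleton (truncate so that the functional is bounded below, run a genus minimax, take $\mu$ large to push $k$ levels below zero, and read off the sign of $\lambda_j$ from the identity $\lambda_j c=2I_0(u_j)-\mu\bigl(1-\tfrac2p\bigr)\|u_j\|_p^p-\bigl(1-\tfrac{2}{4^*}\bigr)\|u_j\|_{4^*}^{4^*}$) is the same as the paper's, and your multiplier computation is exactly the one used in Lemma \ref{Lem:local-ps-satisfies}. The genuine gap is in the compactness step, which is the step the theorem hinges on. You work on the full sphere $S(c)\subset H^2(\mathbb{R}^N)$ and assert that a bounded $(PS)_d$ sequence either converges strongly or splits off a bubble carrying at least $\frac{2}{N}S^{\frac{N}{4}}$, so that negative levels force strong convergence. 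In $\mathbb{R}^N$ this dichotomy is false: the functional and the constraint are translation invariant, so a $(PS)$ sequence can lose compactness by pieces translating off to infinity, carrying part of the mass and \emph{negative} energy, with no Sobolev bubble forming at all; in particular $\|u_n\|_p^p\not\to\|u\|_p^p$ along such sequences even though there is no concentration. The paper proves the statement by repeating the procedure of Theorem \ref{Thm:normalized-Subcritical-solutions-infinity}, which is carried out entirely in the radial space: one works on $S_r(c)=S(c)\cap H^2_r(\mathbb{R}^N)$, uses the compact embedding $H^2_r(\mathbb{R}^N)\hookrightarrow L^p(\mathbb{R}^N)$ for the subcritical term, the measure-theoretic concentration-compactness principle for the critical term, and the Strauss radial decay lemma to upgrade $L^{4^*}_{loc}$ to global $L^{4^*}$ convergence. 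Your proposal introduces no such mechanism (nor any substitute, such as a strict subadditivity argument, which in any case works for minimization but not at higher genus levels); worse, your genus-$j$ sets are built from \emph{translated} bumps, which are not radial, so the construction cannot be repaired by simply inserting the word "radial" — in the radial framework, as in Lemma \ref{Lem:some-properties}, the disjoint-support bumps are radial functions supported in disjoint annuli, pushed into the truncated region by the rescaling $\mathcal{H}(u,s)=e^{\frac{Ns}{2}}u(e^s\cdot)$.

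A second, smaller defect: even granting compactness modulo bubbles, excluding bubbles "because the level is negative" is not sufficient as stated. If a bubble splits off, the energy identity gives $d\geq I_0(\bar u)+\frac{2}{N}S^{\frac{N}{4}}$, where $\bar u$ is the weak limit, and $I_0(\bar u)$ may itself be negative, so $d<0$ alone yields no contradiction unless you also bound $I_0(\bar u)$ from below by something larger than $d-\frac{2}{N}S^{\frac{N}{4}}$. The paper sidesteps this entirely with a norm-based exclusion: the truncation radius satisfies $r_*^2<S^{\frac{N}{4}}$ (see (\ref{eqn:r0-gamma-s-relation})), while any bubble forces $\limsup_n\|\Delta u_n\|_2^2\geq S^{\frac{N}{4}}$, impossible inside the truncated set. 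To transplant this to $\Lambda_\rho(c)$ you must check (or arrange) that Ma--Chang's $\rho$ satisfies $\rho<S^{\frac{N}{4}}$, or else supply the missing quantitative lower bound on $\inf_{\Lambda_\rho(c)}I_0$.
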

Next, we consider the $L^2$-supercritical case. It is clear that $I$ defined in (\ref{eqn:Defn-Iu}) is unbounded from below on $S(c)$ when $\bar{p}<p<4^*$. To overcome this difficulty, the following Poho\v{z}aev manifold
$$
\mathcal P(c):=\{u\in S(c):P(u)=0\}
$$
is necessary, where
\begin{equation}\label{eqn:Pohozaev-identify}
P(u):=\int_{\mathbb{R}^N}|\Delta u|^2 dx+\frac{1}{2}\int_{\mathbb{R}^N}|\nabla u|^2 dx-\mu\gamma_p\int_{\mathbb{R}^N}|u|^p dx-\int_{\mathbb{R}^N}|u|^{4^*} dx.
\end{equation}
Noting that any critical point of $I|_{S(c)}$ stays in $\mathcal P(u)$ (see \cite[Lemma 2.1]{Bonheure-Casteras-Gou-Jeanjean2017}) and the fact that this Poho\v{z}aev manifold is a natural constraint. By utilizing Poho\v{z}aev manifold, we get the following results:

\begin{theorem}\label{Thm:normalized-bScritical-solutions-L2super}
Suppose that $c>0$ and
$$
\begin{cases}
5\leq p<10, & \mbox{if}\ \ \    N=5;\\
4<p<6, & \mbox{if}\ \ \   N= 6; \\
\frac{7}{2}<p<\frac{14}{3}, & \mbox{if}\ \ \   N= 7; \\
\bar{p}<p<4^*, & \mbox{if}\ \ \  N\geq 8, \\
\end{cases}
$$
then problem (\ref{eqn:BS-equation-L2-Super+Critical}) possesses a non-negative solution $u_{\mu,c}$ which satisfies $0<I(u_{\mu,c})<\frac{2}{N}S^\frac{N}{4}$ for large enough $\mu>0$, where $I$ and $S$ are given in (\ref{eqn:Defn-Iu}) and (\ref{eqn:Defn-S}), respectively. Meanwhile, the corresponding Lagrange multiplier $\lambda_{\mu,c}<0$.
\end{theorem}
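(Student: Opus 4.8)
The plan is to realize $u_{\mu,c}$ as a minimizer of $I$ over the Poho\v{z}aev manifold $\mathcal P(c)$, which is a natural constraint for $I|_{S(c)}$ by the cited \cite[Lemma 2.1]{Bonheure-Casteras-Gou-Jeanjean2017}. Since $I$ and $S(c)$ are $O(N)$-invariant, Palais' symmetric criticality principle lets me work in the radial subspace $H^2_{\mathrm{rad}}(\mathbb R^N)$ of radially symmetric functions, which restores compactness of the embeddings $H^2_{\mathrm{rad}}\hookrightarrow L^q$ for $2<q<4^*$ and thereby tames the subcritical term. For fixed $u\in S(c)\cap H^2_{\mathrm{rad}}$ I would introduce the mass-preserving dilation $u_t(x):=t^{N/2}u(tx)$ and study the fiber map
\[
\psi_u(t)=I(u_t)=\frac{t^4}{2}\|\Delta u\|_2^2+\frac{t^2}{2}\|\nabla u\|_2^2-\frac{\mu}{p}\,t^{\frac{N(p-2)}{2}}\|u\|_p^p-\frac{1}{4^*}\,t^{\frac{4N}{N-4}}\|u\|_{4^*}^{4^*}.
\]
In the $L^2$-supercritical regime both nonlinear exponents exceed $4$, so after dividing $\psi_u'(t)$ by $t$ one gets a function equal to $\|\nabla u\|_2^2>0$ at $t=0^+$ that first increases and then decreases to $-\infty$; hence $\psi_u$ has a unique critical point $t_u>0$, a strict global maximum, and $u_{t_u}$ is the unique point of $\mathcal P(c)$ on the fiber. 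Thus $m(c):=\inf_{\mathcal P(c)}I=\inf_{u\in S(c)}\max_{t>0}I(u_t)$, and the identity $I|_{\mathcal P(c)}=\frac14\|\nabla u\|_2^2+\mu\frac{N(p-2)-8}{8p}\|u\|_p^p+\frac2N\|u\|_{4^*}^{4^*}$ (all coefficients positive since $p>\bar p$), combined with the Sobolev and Gagliardo--Nirenberg inequalities, yields $0<m(c)$.

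The decisive step is the strict upper bound $m(c)<\frac2N S^{N/4}$, the threshold below which the critical term cannot produce a bubble. I would test with a truncated Aubin--Talenti instanton $U_\varepsilon$ for the biharmonic Sobolev embedding $\mathcal D^{2,2}(\mathbb R^N)\hookrightarrow L^{4^*}$, rescaled so that $\|U_\varepsilon\|_2^2=c$, and estimate $\max_{t>0}I((U_\varepsilon)_t)$. The bi-Laplacian and critical terms reproduce $\frac2N S^{N/4}$ up to lower-order $\varepsilon$-corrections, while the subcritical term contributes a strictly negative gain of order $\mu\,\varepsilon^{\,N-\frac{(N-4)p}{2}}$; the positive remainder comes from $\|\nabla U_\varepsilon\|_2^2$ and from the truncation of $\|U_\varepsilon\|_{4^*}^{4^*}$. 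Requiring the negative gain to dominate this remainder is precisely what forces the restricted windows of $p$ for $N=5,6,7$ (there the instanton decays too slowly, and only sufficiently large $p$ makes $\|U_\varepsilon\|_p^p$ of the right order), whereas for $N\ge8$ every admissible $p$ works; in all cases taking $\mu$ large secures the strict inequality. Balancing these $\varepsilon$-expansions is the technical heart of the argument.

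With the threshold in hand, I would build a Palais--Smale sequence for $I|_{S(c)\cap H^2_{\mathrm{rad}}}$ at level $m(c)$ carrying the extra information $P(u_n)\to0$, either by Ekeland's principle on the natural constraint $\mathcal P(c)$ or by Jeanjean's augmented functional on $\mathbb R\times S(c)$. Boundedness in $H^2$ follows by choosing $\theta\in[\max\{\tfrac{4}{N(p-2)},\tfrac1{4^*}\},\tfrac12)$ (a nonempty interval exactly because $p>\bar p$), so that in $I(u_n)-\theta P(u_n)\to m(c)$ the coefficients of $\|u_n\|_p^p$ and $\|u_n\|_{4^*}^{4^*}$ are non-negative while those of $\|\Delta u_n\|_2^2$ and $\|\nabla u_n\|_2^2$ stay positive; dropping the non-negative nonlinear terms bounds the $H^2$-norms. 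Handling the mixed dispersion contribution $\|\nabla u_n\|_2^2$ inside this balance, rather than letting it obstruct the purely biharmonic scaling, is where the novel treatment of the $-\Delta u$ term enters. Passing to a weak limit $u_n\rightharpoonup u$, the multipliers $\lambda_n$ (bounded via the Nehari identity) converge and $u$ solves the Euler--Lagrange equation; the compact radial embedding disposes of the subcritical term, and applying the Brezis--Lieb lemma to the critical term together with $m(c)<\frac2N S^{N/4}$ excludes a critical bubble, giving strong convergence and $u\in\mathcal P(c)$ with $I(u)=m(c)\in(0,\frac2N S^{N/4})$.

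Finally I would extract the qualitative properties. To force $u_{\mu,c}\ge0$ I would run the whole scheme with the nonlinearity truncated at $u^+$, so that the limiting right-hand side is non-negative; since for $\lambda<0$ the solution operator of $\Delta^2-\Delta-\lambda$ is positivity preserving (as seen from its factorization into Schr\"{o}dinger-type operators with positive zeroth-order terms), one deduces $u=u^+\ge0$, which then solves the original problem. The sign of the multiplier comes from combining the Nehari identity $\|\Delta u\|_2^2+\|\nabla u\|_2^2-\lambda c=\mu\|u\|_p^p+\|u\|_{4^*}^{4^*}$ with $P(u)=0$ to obtain $\lambda c=\frac12\|\nabla u\|_2^2-\mu(1-\gamma_p)\|u\|_p^p$; because of the dispersion term $\frac12\|\nabla u\|_2^2$ this sign is not automatic, and showing that the subcritical contribution dominates for large $\mu$ is, together with the sub-threshold energy estimate, the main obstacle of the proof --- and exactly the effect of $-\Delta u$ that the paper's novel method is designed to control.
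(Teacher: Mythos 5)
Up to its final step, your proposal retraces the paper's own proof: the radial Poho\v{z}aev manifold $\mathcal P_r(c)$ as a natural constraint with a unique strict fiber maximum (Lemma \ref{Prop:I-u-maximum}), the identification of $m_r(c)=\inf_{\mathcal P_r(c)}I$ with the mountain-pass level of Jeanjean's augmented functional and its positivity via the identity $I=\tfrac14\|\nabla u\|_2^2+\mu\bigl(\tfrac{\gamma_p}{2}-\tfrac1p\bigr)\|u\|_p^p+\tfrac2N\|u\|_{4^*}^{4^*}$ on $\mathcal P_r(c)$ (Lemma \ref{Lem:gamma-tildegamma-equal}), truncated Aubin--Talenti bubbles for the sub-threshold estimate (Lemmas \ref{lem:bih-eatumate-energy} and \ref{Lem:gamma-mu-estimate}), the Br\'ezis--Lieb dichotomy below $\tfrac2NS^{N/4}$ (Lemma \ref{pro:solution-either-or-true}), and the multiplier identity $\lambda c=\tfrac12\|\nabla u\|_2^2-\mu(1-\gamma_p)\|u\|_p^p$. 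So the existence and compactness core is essentially the paper's argument.

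The genuine gap is your non-negativity step. You truncate the nonlinearity at $u^+$ and conclude $u\ge0$ from positivity preservation of $(\Delta^2-\Delta-\lambda)^{-1}$, justified by factoring $\Delta^2-\Delta-\lambda=(-\Delta+a)(-\Delta+b)$. This requires $a+b=1$, $ab=-\lambda$, so \emph{real} (hence positive) $a,b$ exist only when $1+4\lambda\ge0$, i.e. $\lambda\ge-\tfrac14$. Nothing bounds the multiplier from below: by your own identity $\lambda c=\tfrac12\|\nabla u\|_2^2-\mu(1-\gamma_p)\|u\|_p^p$, which must be negative and which you need to drive negative by taking $\mu$ large, $\lambda$ is expected to lie far below $-\tfrac14$ precisely in the regime where the theorem operates. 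For $\lambda<-\tfrac14$ the two factors are complex-conjugate Schr\"{o}dinger operators and positivity preservation fails in general; nor can one fall back on testing with $u^-$, since for $u\in H^2(\mathbb R^N)$ the quantity $\int_{\mathbb R^N}\Delta u\,\Delta u^-\,dx$ has no sign (there is no Kato inequality for the bi-Laplacian), which is exactly why truncation and maximum-principle arguments break down for fourth-order problems. The paper's device is entirely different and avoids any maximum principle: the almost-optimal paths $g_n=((g_n)_1,0)$ in the augmented construction are chosen with $(g_n)_1(t)\ge0$, and Proposition \ref{Pro:three-tildeJ}(ii) makes the PS sequence $u_n=\mathcal H(v_n,s_n)$ $H^2$-close to the non-negative functions $\mathcal H((g_n)_1(t_n),s_n)$, so non-negativity survives the strong limit. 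You need this (or a genuine substitute) to close the proof.

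Two further corrections to the shared part. First, Br\'ezis--Lieb plus the threshold only yield $\|\Delta(u_n-\bar u)\|_2\to0$ and $\|u_n-\bar u\|_{4^*}\to0$; to recover the constraint $\|\bar u\|_2^2=c$ one must test the equations of $u_n$ and $\bar u$ against $u_n-\bar u$, which gives $\bar\lambda\lim_n\|u_n-\bar u\|_2^2=0$. Hence $\bar\lambda<0$ is an \emph{ingredient} of compactness, to be established (after showing $\bar u\neq0$ from $0<m_r(c)<\tfrac2NS^{N/4}$, and using $\mu$ large) \emph{before} strong convergence, not deferred to the end as in your sketch. Second, your appeal to ``$\mu$ large'' in the energy estimate undercuts your claim that the $\varepsilon$-balancing forces the $p$-windows for $N=5,6,7$: since $N(p-2)/2>4$ for $p>\bar p$, for any fixed $v\in S_r(c)$ one has $\max_{t>0}I(v_t)\le\max_{t>0}\bigl[\tfrac{t^4}{2}\|\Delta v\|_2^2+\tfrac{t^2}{2}\|\nabla v\|_2^2-\tfrac{\mu}{p}t^{N(p-2)/2}\|v\|_p^p\bigr]\to0$ as $\mu\to\infty$ (here $v_t(x):=t^{N/2}v(tx)$), so with $\mu$ large the threshold is beaten by \emph{any} test function and no bubble analysis or restriction on $p$ is needed. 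The windows in the theorem come from the paper's estimate, which is uniform in $\mu>0$; there the normalized gradient term $\|\nabla v_\varepsilon\|_2^2=c\|\nabla u_\varepsilon\|_2^2/\|u_\varepsilon\|_2^2$ is not a lower-order remainder (it does not vanish as $\varepsilon\to0$, and blows up for $N\ge6$), and it is dominated only through the ratio $e^{4s_{v_\varepsilon}}\|\nabla v_\varepsilon\|_2^2/\|v_\varepsilon\|_p^p\to0$, whose verification is where the case analysis on $N$ and $p$ actually takes place.
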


\begin{remark}
{\rm
The proof of Theorem \ref{Thm:normalized-bScritical-solutions-L2super} proceeds through the following steps. First, we employ the mountain pass framework introduced in the seminal work \cite{Jeanjean1997} to construct a \emph{(PS)} sequence under the assumptions of the theorem, utilizing an auxiliary functional to derive this sequence. Second, to address challenges posed by the Sobolev critical exponent, we establish an alternative lemma guaranteeing the strong convergence of the mountain pass minimizing sequence. With this lemma in place, the critical task reduces to bounding the mountain pass level within an appropriate range. Here, we introduce a novel technique to account for the influence of the dispersion term $\|\nabla u\|^2_2$
(see Lemma \ref{Lem:gamma-mu-estimate} for details). Finally, taking advantage of the extremal function to the critical equation, we are allowed to reach what we want after making subtle analysis.
}
\end{remark}

The outline of this paper is organized as follows. In Section \ref{sec:preliminary}, we give some notations. With the help of the truncation technique and the concentration-compactness principle, we prove Theorem \ref{Thm:normalized-Subcritical-solutions-existence} in Section \ref{sec:proof-main-theorem-1}. In section \ref{sec:proof-main-theorem-2}, utilizing the genus theory presented by \cite[Theorem 2.1]{Jeanjean-Lu2019}, we complete the proof of Theorem \ref{Thm:normalized-Subcritical-solutions-infinity}. In Section \ref{sec:proof-main-theorem-3-and-4}, we use the minimax approach to construct the mountain pass geometrical structure and reveal the existence of corresponding \emph{(PS)} sequence. Moreover we also focus our attention on the estimate of the mountain pass level and finishing proof of Theorem \ref{Thm:normalized-bScritical-solutions-L2super}.

\section{Preliminaries}\label{sec:preliminary}
Denote by ${L}^s(\mathbb{R}^N)$, $s \in [1,\infty)$, the usual Lebesgue space with its norm
$$
\|u\|_s := \left(\int_{\mathbb{R}^N}|u|^sdx\right)^\frac{1}{s}.
$$
Define the function space
$$
H^2(\mathbb{R}^N) :=\Big\{ u\in L^2(\mathbb{R}^N):\nabla u \in {L}^2(\mathbb{R}^N),\ \Delta u \in {L}^2(\mathbb{R}^N)\Big\}
$$
with the usual norm
$$
\|u\|_{H^2} := \Big(\int_{\mathbb{R}^N}|\Delta u|^2+|\nabla u|^2 +u^2 dx\Big)^\frac{1}{2}.
$$
For $N \geq 5$ and $u \in H^2(\mathbb{R}^N)$, one has the following Gagliardo-Nirenberg inequality (\!\!\cite[Theorem in Lecture II]{Nirenberg1959})
\begin{equation}\label{eqn:GNinequality}
\|u\|_t \leq C_{N,t}\|\Delta u\|^{\gamma_t}_{2}\|u\|^{1-\gamma_t}_2,
\end{equation}
where $t\in(2,4^*]$ and $\gamma_t:=\frac{N}{2}(\frac{1}{2}-\frac{1}{t})$. Particularly, when $t = 4^*$, we find that $\gamma_{4^*}=1$ and
\begin{equation}\label{eqn:Defn-S}
(C_{N,4^*})^{-2}=S:=\inf_{u\in H^2(\mathbb R^N)\setminus\{0\}}\frac{\int_{\mathbb R^N}|\Delta u|^2dx}{(\int_{\mathbb R^N}|u|^{4^*}dx)^\frac{2}{4^*}}.
\end{equation}
To overcome the difficulty of lack of compactness caused by the translations, we shall consider the
functional (\ref{eqn:Defn-Iu}) on the below subset of $S(c)$
\begin{equation}\label{eqn:Defn-Sr-c}
S_r(c):=S(c)\cap H^2_{r}(\mathbb{R}^N),
\end{equation}
where $H^2_r(\mathbb{R}^N)$ is given in (\ref{eqn:H2r-define}). Furthermore, we denote $\mathcal P_r(c)$ by
$$
\mathcal P_r(c):=\mathcal P(c)\cap S_r(c).
$$
In order to introduce the mountain pass level for problem (\ref{eqn:BS-equation-L2-Super+Critical}), we are necessary to consider an auxiliary function space
$E:=H^2(\mathbb{R}^N)\times \mathbb{R}$, whose norm is defined by
$$
\|(u,s)\|_{E}:=\left(\|u\|^2_{H^2}+|s|^2_{\mathbb{R}}\right)^\frac{1}{2}
$$
and $E'$ denotes the dual space of $E$. In addition, for the arguments in the sequel conveniently, we also need the help of the continuous map $\mathcal{H}:E\to H^2(\mathbb{R}^N)$ with
$$
\mathcal{H}(u,s)(x):=e^{\frac{Ns}{2}}u(e^sx).
$$
For the simplicity of the notations, use $C_i$ $(i=1,2\cdots)$ to indicate the positive constants in estimates while it does not lead to confusion in the paper.

\section{Proof of Theorem \ref{Thm:normalized-Subcritical-solutions-existence}}\label{sec:proof-main-theorem-1}

In the first place, assume that
\begin{equation}\label{eqn:Defn-c*}
0<c<\Big(\frac{1}{2\mathcal E}\Big)^\frac{2({4^*-p\gamma_p})}{p(1-\gamma_p)(4^*-2)}=:c_*,
\end{equation}
where
\begin{equation}\label{eqn:A-define}
\mathcal E:=\frac{4^*-p\gamma_p}{2-p\gamma_p}\left(\frac{(2-p\gamma_p)\mu C^p_{N,p}}{(4^*-2)p}\right)^{\frac{4^*-2}{4^*-p\gamma_p}}\left(4^*S^\frac{4^*}{2}\right)^{\frac{p\gamma_p-2}{4^*-p\gamma_p}}.
\end{equation}
Before proceeding the subsequent arguments, we present the following lemma, which involves the function $g_c(r)$ defined below:
\begin{equation}\label{eqn:Defn-fcr}
g_c(r):=\frac{1}{2}-\frac{\mu}{p}C^p_{N,p}c^\frac{p(1-\gamma_p)}{2}r^{p\gamma_p-2}-\frac{1}{4^*S^\frac{4^*}{2}}r^{4^*-2},\ \ \  r> 0.
\end{equation}
\begin{lemma}\label{Lem:fcr-maximum}
For fixed $c>0$, $g_c(r)$ has a unique global maximum point $r_c$ and the maximum value $g_c(r_c)$ satisfies
$$
\left\{
  \begin{array}{ll}
    \max_{r>0} g_c(r)>0, \ \ & if\ \ \  c< c_*; \\[0.15cm]
    \max_{r>0} g_c(r)=0,\ \  & if\ \ \  c=c_*; \\[0.15cm]
    \max_{r>0} g_c(r)<0,\ \  & if\ \ \  c>c_*,
  \end{array}
\right.
$$
where $c_*$ is defined in (\ref{eqn:Defn-c*}).
\end{lemma}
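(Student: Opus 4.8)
The plan is to treat $g_c$ as a function of the single real variable $r\in(0,\infty)$ and to exploit its elementary two-term structure. First I would abbreviate
$$
A:=\frac{\mu}{p}C^p_{N,p}c^{\frac{p(1-\gamma_p)}{2}},\qquad B:=\frac{1}{4^*S^{4^*/2}},\qquad a:=p\gamma_p-2,\qquad b:=4^*-2,
$$
so that $g_c(r)=\frac12-Ar^a-Br^b$ with $A,B>0$. The two exponents have opposite signs: since $p<\bar p$ one has $p\gamma_p=\frac{N(p-2)}{4}<2$, hence $a<0$, while plainly $b>0$; moreover $\gamma_p<1$ because $p<4^*$, so the prefactor $c^{p(1-\gamma_p)/2}$ is a strictly increasing power of $c$ (this monotonicity will drive the trichotomy). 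Reading off the boundary behaviour, $-Ar^a\to-\infty$ as $r\to0^+$ and $-Br^b\to-\infty$ as $r\to+\infty$, so $g_c(r)\to-\infty$ at both ends of $(0,\infty)$; continuity then guarantees that the supremum is attained at some interior point.

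For uniqueness I would differentiate and factor out $r^{a-1}$:
$$
g_c'(r)=-Aa\,r^{a-1}-Bb\,r^{b-1}=r^{a-1}\bigl(-Aa-Bb\,r^{b-a}\bigr).
$$
Because $-Aa>0$, $Bb>0$, and $b-a>0$, the bracket is strictly decreasing in $r$, tends to $-Aa>0$ as $r\to0^+$ and to $-\infty$ as $r\to+\infty$, so it vanishes exactly once, at
$$
r_c:=\Bigl(\frac{-Aa}{Bb}\Bigr)^{\frac{1}{b-a}}.
$$
Since $r^{a-1}>0$, this shows $g_c'>0$ on $(0,r_c)$ and $g_c'<0$ on $(r_c,\infty)$; thus $g_c$ has a single critical point, which together with the two $-\infty$ limits makes $r_c$ the unique global maximum point, proving the first assertion.

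It remains to track the sign of $g_c(r_c)$. Using the critical-point identity $Bb\,r_c^{b}=-Aa\,r_c^{a}$ to eliminate the $r_c^b$ term, I would obtain the closed form
$$
g_c(r_c)=\frac12-A\,\frac{b-a}{b}\,r_c^{a}=\frac12-\frac{b-a}{b}\Bigl(\frac{-a}{Bb}\Bigr)^{\frac{a}{b-a}}A^{\frac{b}{b-a}}=:\frac12-\Phi(c).
$$
The key bookkeeping step is to substitute back the definitions of $A$ and $B$ and simplify the exponents $\frac{a}{b-a}=\frac{p\gamma_p-2}{4^*-p\gamma_p}$ and $\frac{b}{b-a}=\frac{4^*-2}{4^*-p\gamma_p}$; after collecting the $c$-independent constants this reproduces exactly $\Phi(c)=\mathcal E\,c^{\frac{p(1-\gamma_p)(4^*-2)}{2(4^*-p\gamma_p)}}$ with $\mathcal E$ as in (\ref{eqn:A-define}). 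Since the exponent of $c$ is positive, $\Phi$ is strictly increasing in $c$, and the defining relation (\ref{eqn:Defn-c*}) is precisely $\Phi(c_*)=\tfrac12$; hence $g_c(r_c)=\frac12-\Phi(c)$ is positive, zero, or negative according as $c<c_*$, $c=c_*$, or $c>c_*$, which is the claimed trichotomy.

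I expect the main obstacle to be purely computational: matching the explicit value $\Phi(c)$ against the somewhat intricate constant $\mathcal E$ in (\ref{eqn:A-define}) and verifying that the power of $c$ collapses to the exponent appearing in (\ref{eqn:Defn-c*}). No conceptual difficulty arises once the opposite signs of $a$ and $b$ and the monotonicity of $c\mapsto c^{p(1-\gamma_p)/2}$ are in hand; the care needed lies entirely in the exponent arithmetic linking $r_c$, $\mathcal E$, and $c_*$.
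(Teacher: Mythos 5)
Your proposal is correct and follows essentially the same route as the paper: both establish $g_c(r)\to-\infty$ at $r\to0^+$ and $r\to+\infty$, locate the unique critical point $r_c$ (your $\bigl(\tfrac{-Aa}{Bb}\bigr)^{1/(b-a)}$ agrees with (3.6)), and reduce the trichotomy to comparing the explicit maximum value $\tfrac12-\mathcal{E}\,c^{\frac{p(1-\gamma_p)(4^*-2)}{2(4^*-p\gamma_p)}}$ with the definition of $c_*$. The only cosmetic difference is that you use the critical-point identity $Bb\,r_c^{b}=-Aa\,r_c^{a}$ to shortcut the exponent arithmetic, whereas the paper substitutes $r_c$ directly and expands; the outcome is identical.
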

\begin{proof}
Noting that $0<p\gamma_p <2$ for $2<p<2+\frac{4}{N}$ and $4^*>2$, one has
$$
\lim\limits_{r\to 0^+}g_c(r)=-\infty \ \ \mbox{and}\ \  \lim\limits_{r\to +\infty}g_c(r)=-\infty.
$$
Besides, the direct calculation brings that
$$
g'_c(r)=-\frac{\mu}{p}C^p_{N,p}c^\frac{p(1-\gamma_p)}{2}(p\gamma_p-2)r^{p\gamma_p-3}-\frac{1}{4^*S^\frac{4^*}{2}}(4^*-2)r^{4^*-3},
$$
which means that $g'_c(r)=0$ has a unique solution
\begin{equation}\label{eqn:rc-define}
r_c:=\left(\frac{(2-p\gamma_p)\mu 4^*S^\frac{4^*}{2} C^p_{N,p}c^\frac{p(1-\gamma_p)}{2}}{(4^*-2)p}\right)^\frac{1}{4^*-{p\gamma_p}}.
\end{equation}
Meanwhile, $g_c(r)$ is increasing in $(0,r_c)$ and is decreasing in $(r_c,\infty)$. Hence, the maximum value of $g_c(r)$ is

$$
\begin{aligned}
\max\limits_{r>0}g_c(r)&=g_c(r_c)\\
&=\frac{1}{2}-\frac{\mu C^p_{N,p}}{p}c^{\frac{p(1-\gamma_p)}{2}}\left(\frac{(2-p\gamma_p) \mu C^p_{N,p}4^*S^\frac{4^*}{2}c^\frac{p(1-\gamma_p)}{2}}{(4^*-2)p}\right)^\frac{p\gamma_p-2}{4^*-{p\gamma_p}}\\
&\ \ \ \ \ \ \ -\frac{1}{4^*S^\frac{4^*}{2}}\left(\frac{(2-p\gamma_p) \mu C^p_{N,p}4^*S^\frac{4^*}{2}c^\frac{p(1-\gamma_p)}{2}}{(4^*-2)p}\right)^\frac{4^*-2}{4^*-{p\gamma_p}}\\
&=\frac{1}{2}-c^{\frac{p(1-\gamma_p)}{2}\cdot\frac{4^*-2}{4^*-p\gamma_p}}\left(\frac{\mu C^p_{N,p}}{p}\right)^{\frac{4^*-2}{4^*-p\gamma_p}}\left(4^*S^\frac{4^*}{2}\right)^{\frac{p\gamma_p-2}{4^*-p\gamma_p}}\left(\frac{2-p\gamma_p}{4^*-2}\right)^{\frac{p\gamma_p-2}{4^*-p\gamma_p}}\\
&\ \ \ \ \ \ \ +c^{\frac{p(1-\gamma_p)}{2}\cdot\frac{4^*-2}{4^*-p\gamma_p}}\left(\frac{\mu C^p_{N,p}}{p}\right)^{\frac{4^*-2}{4^*-p\gamma_p}}\left(4^*S^\frac{4^*}{2}\right)^{\frac{p\gamma_p-2}{4^*-p\gamma_p}}\left(\frac{2-p\gamma_p}{4^*-2}\right)^{\frac{4^*-2}{4^*-p\gamma_p}} \\
&=\frac{1}{2}-c^{\frac{p(1-\gamma_p)(4^*-2)}{2({4^*-p\gamma_p})}}\left(\frac{\mu C^p_{N,p}}{p}\right)^{\frac{4^*-2}{4^*-p\gamma_p}}\left(4^*S^\frac{4^*}{2}\right)^{\frac{p\gamma_p-2}{4^*-p\gamma_p}}\left(\frac{2-p\gamma_p}{4^*-2}\right)^{\frac{4^*-2}{4^*-p\gamma_p}}\\
&\ \ \ \ \ \ \ \ \ \ \ \ \ \ \ \ \ \ \ \ \ \ \ \ \ \ \ \ \ \ \ \ \ \ \ \ \ \ \ \ \ \ \ \ \ \ \ \ \ \ \ \ \ \ \ \ \ \ \ \ \ \ \ \ \ \ \ \ \ \left[\left(\frac{2-p\gamma_p}{4^*-2}\right)^{-1}+1\right]\\
&=\frac{1}{2}-c^{\frac{p(1-\gamma_p)(4^*-2)}{2({4^*-p\gamma_p})}}\!\!\left(\frac{4^*-p\gamma_p}{2-p\gamma_p}\right)\!\!\left(\frac{(2-p\gamma_p)\mu C^p_{N,p}}{(4^*-2)p}\right)^{\frac{4^*-2}{4^*-p\gamma_p}}\!\!\!\!\left(4^*S^\frac{4^*}{2}\right)^{\frac{p\gamma_p-2}{4^*-p\gamma_p}}.
\end{aligned}
$$
Based on the definition of $c_*$ in (\ref{eqn:Defn-c*}), the conclusions hold true immediately.
\end{proof}

On the basis of Lemma \ref{Lem:fcr-maximum}, we can naturally introduce the constraint set for our problem
(\ref{eqn:BS-equation-L2-Super+Critical}). Actually, consider the function
\begin{equation}\label{eqn:Defn-gc-fc}
\begin{aligned}
h_c(r)&=\frac{1}{2}r^2-\frac{\mu}{p}C^p_{N,p}c^\frac{p(1-\gamma_p)}{2}r^{p\gamma_p}-\frac{1}{4^*S^\frac{4^*}{2}}r^{4^*}\\
&=r^2\Bigl[\frac{1}{2}-\frac{\mu}{p}C^p_{N,p}c^\frac{p(1-\gamma_p)}{2}r^{p\gamma_p-2}-\frac{1}{4^*S^\frac{4^*}{2}}r^{4^*-2}\Bigr]\\
&:=r^2g_c(r),\ \ r\geq 0.
\end{aligned}
\end{equation}
For the simplicity of the notation, from now on, let
\begin{equation}\label{eqn:Defn-r0}
r_*:= r_{c_*} > 0
\end{equation}
being determined by (\ref{eqn:rc-define}). Obviously, as a special case of Lemma \ref{Lem:fcr-maximum}, there hold that $g_{c_*}(r_*)=0$ and $g_c(r_*)>0$ for $c\in(0, c_*)$.
Meanwhile, if $c\in(0,c_*)$, $h_c(r)$ has exactly three zeros $0$, $r_1$ and $r_2$ with $0<r_1<r_*<r_2<+\infty$.
Indeed, analyze the derivatives of $h_c(r)$ up to third order:
$$
h'_c(r)= r-{\mu C^p_{N,p}\gamma_pc^\frac{p(1-\gamma_p)}{2}}r^{p\gamma_p-1}-\frac{1}{S^{\frac{4^*}{2}}}r^{4^*-1},
$$
$$
h''_c(r)=1-{\mu C^p_{N,p}\gamma_p(p\gamma_p-1)c^\frac{p(1-\gamma_p)}{2}}{4}r^{p\gamma_p-2}-\frac{4^*-1}{S^{\frac{4^*}{2}}}r^{4^*-2}
$$
and
$$
h'''_c(r)=-{\mu C^p_{N,p}\gamma_p(p\gamma_p-1)(p\gamma_p-2)c^\frac{p(1-\gamma_p)}{2}}r^{p\gamma_p-3}-\frac{(4^*-1)(4^*-2)}{S^{\frac{4^*}{2}}}r^{4^*-3},
$$
the figure of $h_c(r)$ can be shown below:
\begin{figure}[htbp]
\centering
\begin{minipage}[t]{0.47\textwidth}
\centering
\includegraphics[scale=0.6]{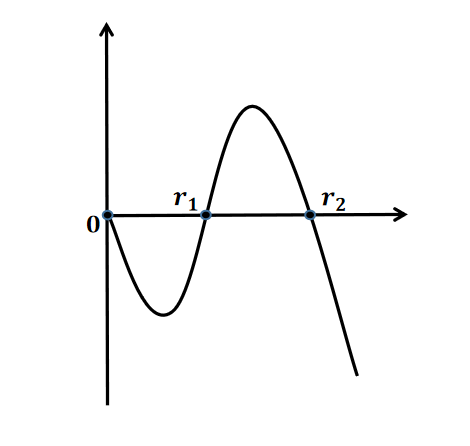}
\caption{figure of $h_c(r)$ for $0<c<c_*$}
\end{minipage}
\end{figure}

Summarizing the above discussions, we are allowed to introduce the following set:
\begin{equation}\label{eqn:Vr0-define}
V_{r}(c):=\left\{u\in S_r(c):\|\Delta u\|^2_2+\|\nabla u\|^2_2<  r^2_*\right\}.
\end{equation}
In the sequel, the functional $I$ will be restricted to $V_{r}(c)$ for $c\in (0,c_*)$.
\begin{lemma} \label{Lem:Ju-bound}
The functional $I$ is bounded from below in $V_{r}(c)$ and $m_*(c):=\inf_{u\in V_{r}(c)}I(u)<0$.
\end{lemma}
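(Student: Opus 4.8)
The plan is to prove the two assertions separately, both resting on the Gagliardo-Nirenberg inequality (\ref{eqn:GNinequality}), the definition (\ref{eqn:Defn-S}) of $S$, and the auxiliary function $h_c$ introduced in (\ref{eqn:Defn-gc-fc}). For the lower bound, I would first set $r:=\bigl(\|\Delta u\|^2_2+\|\nabla u\|^2_2\bigr)^{1/2}$ for $u\in V_r(c)$, so that $\|\Delta u\|_2\leq r<r_*$. Applying (\ref{eqn:GNinequality}) with $t=p$ and using $\|u\|^2_2=c$ gives $\|u\|^p_p\leq C^p_{N,p}c^{p(1-\gamma_p)/2}\|\Delta u\|^{p\gamma_p}_2\leq C^p_{N,p}c^{p(1-\gamma_p)/2}r^{p\gamma_p}$, while the case $t=4^*$ together with (\ref{eqn:Defn-S}) yields $\|u\|^{4^*}_{4^*}\leq S^{-4^*/2}r^{4^*}$. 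Substituting these two estimates into (\ref{eqn:Defn-Iu}) and using $\tfrac12\bigl(\|\Delta u\|^2_2+\|\nabla u\|^2_2\bigr)=\tfrac12 r^2$ produces exactly $I(u)\geq h_c(r)$. Since $u\in V_r(c)$ forces $r\in[0,r_*)$ and $h_c$ is continuous on the compact interval $[0,r_*]$, the value $\min_{[0,r_*]}h_c$ is finite, whence $I$ is bounded from below on $V_r(c)$.

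To show $m_*(c)<0$, I would exhibit a single element of $V_r(c)$ at which $I$ is negative by means of the $L^2$-preserving dilation. Fix any $u\in S_r(c)$ with $\|u\|_p\neq 0$ and set $u_t(x):=t^{N/2}u(tx)$ for $t>0$; this keeps $u_t$ radially decreasing and preserves $\|u_t\|^2_2=c$, while a direct change of variables gives $\|\Delta u_t\|^2_2=t^4\|\Delta u\|^2_2$, $\|\nabla u_t\|^2_2=t^2\|\nabla u\|^2_2$, $\|u_t\|^p_p=t^{N(p-2)/2}\|u\|^p_p$ and $\|u_t\|^{4^*}_{4^*}=t^{N(4^*-2)/2}\|u\|^{4^*}_{4^*}$. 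Consequently $\|\Delta u_t\|^2_2+\|\nabla u_t\|^2_2=t^4\|\Delta u\|^2_2+t^2\|\nabla u\|^2_2\to 0$ as $t\to0^+$, so $u_t\in V_r(c)$ for all sufficiently small $t$.

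It then remains to compare the exponents in
\[
I(u_t)=\frac{t^4}{2}\|\Delta u\|^2_2+\frac{t^2}{2}\|\nabla u\|^2_2-\frac{\mu}{p}t^{\frac{N(p-2)}{2}}\|u\|^p_p-\frac{1}{4^*}t^{\frac{N(4^*-2)}{2}}\|u\|^{4^*}_{4^*}.
\]
The assumption $2<p<2+\frac{4}{N}$ is precisely what gives $\frac{N(p-2)}{2}<2$, while $\frac{N(4^*-2)}{2}=\frac{4N}{N-4}>4$. Hence, as $t\to0^+$, the strictly negative $L^2$-subcritical term is of lower order than the three terms of order $t^2$, $t^4$ and $t^{N(4^*-2)/2}$, so $I(u_t)<0$ for all small $t$ (indeed $I(u_t)\to0^-$). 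Choosing such a $t$ yields $m_*(c)\leq I(u_t)<0$.

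I expect the boundedness from below to be essentially routine once the inequality $I(u)\geq h_c(r)$ is in place, so the real content lies in the second assertion. The only delicate point there is that the chosen dilation must simultaneously stay inside the open set $V_r(c)$ and render the functional negative; both are secured for small $t$ by the scaling rates above, and it is the $L^2$-subcritical hypothesis $p<2+\frac{4}{N}$ — forcing $\frac{N(p-2)}{2}<2$ — that makes the negative nonlinear term dominate near $t=0$.
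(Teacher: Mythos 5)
Your proof is correct and follows essentially the same route as the paper: the lower bound via the Gagliardo--Nirenberg inequality and the function $h_c$ evaluated at $r=\bigl(\|\Delta u\|^2_2+\|\nabla u\|^2_2\bigr)^{1/2}$, and the negativity of $m_*(c)$ via the mass-preserving dilation $u_t(x)=t^{N/2}u(tx)$, which is exactly the paper's map $\mathcal{H}(u,s)$ with $t=e^{s}$, $s\to-\infty$. The exponent comparison $\tfrac{N(p-2)}{2}<2<4<\tfrac{N(4^*-2)}{2}$ you use is precisely the paper's argument, so the two proofs coincide in substance.
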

\begin{proof}
By (\ref{eqn:GNinequality}) and (\ref{eqn:Defn-S}), for any $u\in V_{r}(c)$, we obtain that
$$
\begin{aligned}
I(u)&=\frac{1}{2}\|\Delta u\|^2_2 +\frac{1}{2}\|\nabla u\|^2_2 -\frac{\mu}{p}\|u\|^p_p-\frac{1}{4^*}\|u\|^{4^*}_{4^*} \\
&\geq\frac{1}{2}\Big(\|\Delta u\|^2_2+\|\nabla u\|^2_2\Big)-\frac{\mu}{p}C^p_{N,p}c^\frac{p(1-\gamma_p)}{2}\|\Delta u\|^{p\gamma_p}_2-\frac{1}{4^*S^\frac{4^*}{2}}\|\Delta u\|^{4^*}_2\\
&\geq\frac{1}{2}\Big(\|\Delta u\|^2_2+\|\nabla u\|^2_2\Big)-\frac{\mu}{p}C^p_{N,p}c^\frac{p(1-\gamma_p)}{2}\Big(\|\Delta u\|^2_2+\|\nabla u\|^2_2\Big)^\frac{p\gamma_p}{2}-\frac{1}{4^*S^\frac{4^*}{2}}\Big(\|\Delta u\|^2_2+\|\nabla u\|^2_2\Big)^\frac{4^*}{2}\\
&=\frac{1}{2}\Big(\sqrt{\|\Delta u\|^2_2+\|\nabla u\|^2_2}\Big)^2-\frac{\mu}{p}C^p_{N,p}c^\frac{p(1-\gamma_p)}{2}\Big(\sqrt{\|\Delta u\|^2_2+\|\nabla u\|^2_2}\Big)^{p\gamma_p}-\frac{1}{4^*S^\frac{4^*}{2}}\Big(\sqrt{\|\Delta u\|^2_2+\|\nabla u\|^2_2}\Big)^{4^*}\\
&=h_{c}\Big(\sqrt{\|\Delta u\|^2_2+\|\nabla u\|^2_2}\Big) \geq\inf_{r\in[0,{r_*})}h_{c}(r)>-\infty,
\end{aligned}
$$
which implies that the functional $I$ is bounded from below in $V_{r}(c)$. Fix $u \in S_r(c)$, obviously, $\mathcal{H}(u,s)(x)=e^{\frac{Ns}{2}}u(e^sx) \in S_r(c)$ for $s\in\mathbb R$ and a direct calculation gives that
$$
\begin{aligned}
I(\mathcal{H}(u,s)) & =\frac{1}{2}\|\Delta \mathcal{H}(u,s)\|^2_2+\frac{1}{2}\|\nabla \mathcal{H}(u,s)\|^2_2 -\frac{\mu}{p}\|\mathcal{H}(u,s)\|^p_p-\frac{1}{4^*}\|\mathcal{H}(u,s)\|^{4^*}_{4^*}\\
& = \frac{e^{4s}}{2}\|\Delta u\|^2_2+\frac{e^{2s}}{2}\|\nabla u\|^2_2-\frac{\mu e^{2p\gamma_p s}}{p}\|u\|^p_p-\frac{e^{24^* s}}{4^*}\|u\|^{4^*}_{4^*}.
\end{aligned}
$$
Since $2p\gamma_p=\frac{N(p-2)}{2}<2$ and $24^*>4$, there exists $s_0<0$ such that $\|\Delta \mathcal{H}(u,s_0)\|^2_2+\|\nabla \mathcal{H}(u,s_0)\|^2_2=e^{4s_0}\|\Delta u\|^2_2+e^{2s_0}\|\nabla u\|^2_2 < r^2_*$ and $I(\mathcal{H}(u,s_0))<0$. Hence, $m_*(c)<0$.
\end{proof}
Next, we prove that $I$ satisfies the $(PS)_a$ condition on $V_{r}(c)$.
\begin{lemma}\label{Lem:local-ps-satisfies}
For $c\in(0,c_*)$, $I$ verifies a local $(PS)_a$ condition on $V_{r}(c)$ for the level $a<0$.
\end{lemma}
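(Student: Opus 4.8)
The plan is to show that any sequence $\{u_n\}\subset V_r(c)$ with $I(u_n)\to a<0$ and $(I|_{V_r(c)})'(u_n)\to 0$ possesses a subsequence converging strongly in $H^2(\mathbb{R}^N)$. The decisive first step is to confine the sequence to the interior region where the volume constraint is inactive. Writing $t_n:=(\|\Delta u_n\|_2^2+\|\nabla u_n\|_2^2)^{1/2}$, the chain of estimates in the proof of Lemma \ref{Lem:Ju-bound} gives $I(u_n)\ge h_c(t_n)$. Since $c\in(0,c_*)$, Lemma \ref{Lem:fcr-maximum} and the discussion following \eqref{eqn:Defn-gc-fc} show that $h_c<0$ on $(0,r_1)$ while $h_c>0$ on $(r_1,r_*)$; because $I(u_n)\to a<0$, for large $n$ we must have $t_n\in[0,r_1)$, so $t_n$ stays uniformly bounded away from the boundary value $r_*$. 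In particular $\{u_n\}$ is bounded in $H^2(\mathbb{R}^N)$ and the constraint $\|\Delta u\|_2^2+\|\nabla u\|_2^2<r_*^2$ is never active along the sequence, so $\{u_n\}$ is a genuine interior $(PS)$ sequence for $I$ on the sole mass constraint $S_r(c)$.

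Next I would produce the Lagrange multipliers. As only the $L^2$-constraint is active, the Lagrange multiplier rule yields $\lambda_n\in\mathbb{R}$ with $I'(u_n)-\lambda_n u_n\to 0$ in $H^{-2}(\mathbb{R}^N)$. Testing against the bounded sequence $u_n$ gives $\lambda_n c=\|\Delta u_n\|_2^2+\|\nabla u_n\|_2^2-\mu\|u_n\|_p^p-\|u_n\|_{4^*}^{4^*}+o(1)$, so $\{\lambda_n\}$ is bounded; passing to a subsequence $\lambda_n\to\lambda$, and I would verify that $a<0$ together with this relation forces $\lambda<0$ (precisely the sign recorded in Theorem \ref{Thm:normalized-Subcritical-solutions-existence}). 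Along a further subsequence $u_n\rightharpoonup u$ in $H^2_r(\mathbb{R}^N)$; by the compact embedding $H^2_r(\mathbb{R}^N)\hookrightarrow L^q(\mathbb{R}^N)$ for $2<q<4^*$ we obtain $u_n\to u$ in $L^p$, so $\|u_n\|_p^p\to\|u\|_p^p$, and letting $n\to\infty$ shows that $u$ solves $\Delta^2u-\Delta u-\lambda u=\mu|u|^{p-2}u+|u|^{4^*-2}u$ weakly.

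The core of the proof, and the main obstacle, is recovering compactness against the Sobolev-critical term, for which radial symmetry provides no help. Setting $v_n:=u_n-u\rightharpoonup 0$, I would apply the Brezis--Lieb lemma to split $\|\Delta u_n\|_2^2$, $\|\nabla u_n\|_2^2$ and $\|u_n\|_{4^*}^{4^*}$, and combine this with the weak equation for $u$ and the $(PS)$ relation tested against $u_n$ to obtain, with $d:=\lim(\|\Delta v_n\|_2^2+\|\nabla v_n\|_2^2)$ and $\ell:=\lim\|v_n\|_{4^*}^{4^*}$, the balance $d-\lambda\lim\|v_n\|_2^2-\ell=0$ (the subcritical part drops out). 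The delicate point is excluding $\ell>0$: the Sobolev inequality \eqref{eqn:Defn-S} forces $d\ge S\ell^{2/4^*}$, and since $\lambda<0$ this yields the quantization $\ell\ge S^{N/4}$; on the other hand the confinement $t_n<r_1$ gives $d\le r_1^2<r_*^2$, and together with the fact that the energy level lies below the critical threshold ($a<0<\frac{2}{N}S^{\frac{N}{4}}$) this is expected to produce a contradiction. Hence $\ell=0$, i.e. $u_n\to u$ in $L^{4^*}$.

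Finally, with $\|v_n\|_p\to0$ and $\|v_n\|_{4^*}\to0$ the balance degenerates to $\|\Delta v_n\|_2^2+\|\nabla v_n\|_2^2=\lambda\|v_n\|_2^2+o(1)$; since the left-hand side is nonnegative and $\lambda<0$, it follows that $\|v_n\|_2\to0$ and then $\|\Delta v_n\|_2,\|\nabla v_n\|_2\to0$, so $u_n\to u$ strongly in $H^2(\mathbb{R}^N)$ with $u\in V_r(c)$ and $I(u)=a$. I expect the exclusion of critical concentration --- matching the quantization $\ell\ge S^{N/4}$ against the confinement bound $d\le r_*^2$ and the negativity of the level $a$, while simultaneously controlling the possible $L^2$-mass loss through the sign of $\lambda$ --- to be the principal difficulty, with the rigorous determination of $\lambda<0$ a close second.
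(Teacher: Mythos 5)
Your compactness scheme is genuinely different from the paper's: you split $u_n=u+v_n$ via Brezis--Lieb and derive the balance identity $d-\lambda m-\ell=0$ (testing the $(PS)$ relation with $u_n$ and the limit equation with $u$), whereas the paper runs the concentration--compactness principle on the measures $|\Delta u_n|^2$, $|u_n|^{4^*}$ with cut-off functions and then upgrades $L^{4^*}_{loc}$ to global $L^{4^*}$ convergence through the radial decay lemma of Berestycki--Lions. Much of your outline is correct, and in places sharper than the paper: the confinement $t_n<r_1$ (the paper only uses $t_n<r_*$), the boundedness of $\lambda_n$, the quantization $\ell\ge S^{N/4}$ when $\ell>0$ (indeed $\ell=d-\lambda m\ge d\ge S\ell^{2/4^*}$ once $\lambda<0$), and the endgame $d=\lambda m$ with $\lambda<0$ forcing $m=d=0$. (The sign $\lambda<0$, which you only assert, does follow exactly as in the paper from $\lambda_n c=2I(u_n)+\frac{(2-p)\mu}{p}\|u_n\|_p^p+\frac{2-4^*}{4^*}\|u_n\|_{4^*}^{4^*}+o_n(1)\le 2a+o_n(1)$.)

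The genuine gap sits at the step you yourself flag as the crux: ruling out $\ell\ge S^{N/4}$. Your two ingredients are $d\le r_1^2<r_*^2$ and the threshold $a<0<\frac{2}{N}S^{N/4}$, but neither, nor their combination, yields a contradiction. What is actually needed is the geometric inequality $r_*^2<S^{N/4}$, i.e. $r_*<S^{N/8}$: with it your argument closes at once, since $\ell\ge S^{N/4}$ gives $d\ge S\ell^{2/4^*}\ge S^{N/4}>r_*^2\ge d$. This inequality is not part of the definition (\ref{eqn:Defn-r0}) of $r_*$; it is precisely what the paper extracts in (\ref{eqn:remove-01})--(\ref{eqn:r0-gamma-s-relation}) from the definition (\ref{eqn:Defn-c*}) of $c_*$, and it is the paper's announced improvement over Alves et al. Your fallback, the energy threshold, fails in this local $L^2$-subcritical setting: the energy splits as $a=I(u)+\frac{d}{2}-\frac{\ell}{4^*}$ with $\ell=d-\lambda m$, and since the limit equation only gives $I(u)\ge\frac{\lambda}{2}\|u\|_2^2$ (which may be negative) while mass may leak ($m>0$, $\lambda m\le 0$), one obtains at best $a\ge\frac{\lambda c}{2}+\frac{2}{N}S^{N/4}$, which is perfectly compatible with $\lambda\le 2a/c$ and $a<0$; no contradiction follows. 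So the proposal cannot be completed as written, but it becomes a complete proof --- and arguably a cleaner one than the paper's, avoiding both the concentration--compactness measures and the radial lemma --- once you add the derivation of $r_*<S^{N/8}$ from $c<c_*$.
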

\begin{proof}
Let $\{u_n\}$ be a $(PS)_a$ sequence of $I$ restricts to $V_{r}(c)$ with $a<0$, i.e.
\begin{equation}\label{eqn:Iun-a-0}
I(u_n)\to a<0\ \ \mbox{and}\ \ (I|_{V_{r}(c)})'(u_n)\to 0\ \ \mbox{as}\ \ \ n\to \infty,
\end{equation}
and $\|\Delta u_n\|^2_2+\|\nabla u_n\|^2_2<r^2_*$. Therefore, take into account that the boundedness of $\{u_n\}$ and the compactness of the embedding $H^2_{r}(\mathbb R^N)\hookrightarrow L^p(\mathbb R^N)$, there exists $u\in H^2_{r}(\mathbb R^N)$ such that up to a subsequence if necessary,
\begin{equation}\label{eqn:bounded-three-proposition}
u_n\rightharpoonup u\ \ \mbox{in}\ \ H^2_{r}(\mathbb R^N),\ \ u_n\to u \ \ \mbox{in}\ \ L^p({\mathbb R^N})\ (2<p<4^*)\ \mbox{and}\ \ u_n\to u \ \ a.e.\ \ \mbox{in}\ \ \ {\mathbb R^N}.
\end{equation}
Now, we claim that $u\neq 0$. If not, (\ref{eqn:bounded-three-proposition}) brings that $u_n\to 0$ in $L^p({\mathbb R^N})$. Let
$$
h(r):=\frac{1}{2}r^2-\frac{1}{4^*S^\frac{4^*}{2}}r^{4^*},\ \ r\geq 0.
$$
By a direct calculation, we get
$$
h'(r)=r-\frac{1}{S^\frac{4^*}{2}}r^{4^*-1},
$$
which implies that $h'(r)=0$ has a unique solution
$$
\tilde{r}:=S^\frac{N}{8}.
$$
Moreover, $h(r)$ is increasing in $(0,\tilde{r})$ and is descreasing in $(\tilde{r},\infty)$. Meanwhile, in consideration of (\ref{eqn:Defn-c*}), we see that
\begin{equation}\label{eqn:remove-01}
c^{\frac{p(1-\gamma_p)(4^*-2)}{2(4^*-p\gamma_p)}}\leq\frac{1}{2}\left(\frac{p}{\mu C^p_{N,p}}\right)^{\frac{4^*-2}{4^*-p\gamma_p}}\left(4^*{S^{\frac{4^*}{2}}}\right)^{\frac{2-p\gamma_p}{4^*-p\gamma_p}}\left(\frac{4^*-2}{2-p\gamma_p}\right)^{\frac{4^*-2}{4^*-p\gamma_p}}\frac{2-p\gamma_p}{4^*-p\gamma_p},
\end{equation}
which brings that
\begin{equation}\label{eqn:remove-02}
\begin{aligned}
c^{\frac{p(1-\gamma_p)}{4^*-p\gamma_p}}&\leq\!\left(\!\frac{1}{2}\right)^\frac{2}{4^*-2}\!\!\left(\frac{({4^*-2})p}{({2-p\gamma_p})\mu C^p_{N,p}}\right)^{\frac{2}{4^*-p\gamma_p}}\!\!\!\!\left(4^*{S^{\frac{4^*}{2}}}\right)^{\frac{2(2-p\gamma_p)}{(4^*-p\gamma_p)(4^*-2)}}\!\!\!\left(\frac{2-p\gamma_p}{4^*-p\gamma_p}\right)^\frac{2}{4^*-2}\\
&=\!\!\left(\!\frac{({4^*-2})p}{({2-p\gamma_p})\mu C^p_{N,p}}\right)^{\frac{2}{4^*-p\gamma_p}}\!\!\!\left(\!\frac{1}{2}\!\right)^\frac{2}{4^*-2}\!\!\!\left[\left(\frac{1}{4^*{S^{\frac{4^*}{2}}}}\right)^\frac{2}{4^*-p\gamma_p}\right]^{\frac{p\gamma_p-2}{4^*-2}}\!\!\!\!\left(\frac{2-p\gamma_p}{4^*-p\gamma_p}\right)^\frac{2}{4^*-2}\\
&=\!\!\left(\!\frac{({4^*-2})p}{({2-p\gamma_p})\mu C^p_{N,p}}\right)^{\frac{2}{4^*-p\gamma_p}}\!\!\!\left(\frac{4^*{S^{\frac{4^*}{2}}}}{2}\right)^\frac{2}{4^*-2}\!\!\left(\frac{1}{4^*{S^{\frac{4^*}{2}}}}\right)^\frac{2}{4^*-p\gamma_p}\!\!\!\left(\frac{2-p\gamma_p}{4^*-p\gamma_p}\right)^\frac{2}{4^*-2}\\
&=\!\!\left(\!\frac{(4^*-2)p}{({2-p\gamma_p})\mu4^* S^{\frac{4^*}{2}} C^p_{N,p}}\right)^{\frac{2}{4^*-p\gamma_p}}\!\!\!\left(\frac{(2-p\gamma_p)4^*S^{\frac{4^*}{2}}}{2(4^*-p\gamma_p)}\right)^\frac{2}{4^*-2}\\
&=r^{-2}_cc^\frac{p(1-\gamma_p)}{4^*-p\gamma_p}\left(\frac{(2-p\gamma_p)4^*S^{\frac{4^*}{2}}}{2(4^*-p\gamma_p)}\right)^\frac{2}{4^*-2}.\\
\end{aligned}
\end{equation}
Equivalently, it states that
\begin{equation}\label{eqn:remove-03}
\begin{aligned}
r_c\leq\left(\frac{(2-p\gamma_p)4^*S^{\frac{4^*}{2}}}{2(4^*-p\gamma_p)}\right)^\frac{1}{4^*-2}
&=\left(\frac{(2-p\gamma_p)4^*}{2(4^*-p\gamma_p)}\right)^\frac{1}{4^*-2}S^\frac{N}{8}\\
&=\left(\frac{8N-N^2(p-2)}{8N-N^2(p-2)+4N(p-2)}\right)^\frac{1}{4^*-2}S^\frac{N}{8}<S^\frac{N}{8}=\tilde{r}.
\end{aligned}
\end{equation}
In particular, there holds that
\begin{equation}\label{eqn:r0-gamma-s-relation}
0<r_*=r_{c_*}<S^\frac{N}{8}=\tilde{r}.
\end{equation}
Together with $h(0)=0$, we refer that
\begin{equation}\label{eqn:h-0-r-r0}
h(r)\geq 0,\ \ \ r\in[0,r_*).
\end{equation}
Hence, utilizing (\ref{eqn:Defn-Iu}), (\ref{eqn:Defn-S}), (\ref{eqn:h-0-r-r0}) and $\|u_n\|^p_p\to 0$, we obtain that
\begin{equation}\label{eqn:lambda-leq-0-strict-L2sub}
\begin{aligned}
I(u_n)&= \frac{1}{2}\|\Delta u_n\|^2_2 +\frac{1}{2}\|\nabla u_n\|^2_2 -\frac{\mu}{p}\|u_n\|^p_p -\frac{1}{4^*}\|u_n\|^{4^*}_{4^*}\\
&\geq \frac{1}{2}\Big(\|\Delta u_n\|^2_2+\|\nabla u_n\|^2_2\Big)-\frac{\mu}{p}\|u_n\|^p_p-\frac{1}{4^*S^\frac{4^*}{2}}\|\Delta u_n\|^{4^*}_2\\
&\geq \frac{1}{2}\Big(\|\Delta u_n\|^2_2+\|\nabla u_n\|^2_2\Big)-\frac{\mu}{p}\|u_n\|^p_p-\frac{1}{4^*S^\frac{4^*}{2}}\Big(\|\Delta u_n\|^2_2+\|\nabla u_n\|^2_2\Big)^\frac{4^*}{2}\\
&= \frac{1}{2}h\Big(\sqrt{\|\Delta u_n\|^2_2+\|\nabla u_n\|^2_2}\Big)-\frac{\mu}{p}\|u_n\|^p_p\\
&\geq -\frac{\mu}{p}\|u_n\|^p_p=o_n(1),
\end{aligned}
\end{equation}
which contradicts $\lim_{n\to\infty}I(u_n)=a<0$ in (\ref{eqn:Iun-a-0}). Thus, the weak limit $u$ is nontrivial.

For the sequence $\{u_n\}$, in view of \cite[Proposition 5.12]{Willem1996}, there exists $\{\lambda_n\}\subset \mathbb R$ such that
\begin{equation}\label{eqn:J'un-lambdapsiun-0}
I'(u_n)-\lambda_n\Psi'(u_n)\to 0\ \ \mbox{in}\ \  H^{-2}_{r}(\mathbb R^N)\ \ \mbox{as}\ \ n\to\infty,
\end{equation}
where $H^{-2}_{r}(\mathbb R^N)$ is the dual space of $H^{2}_{r}(\mathbb R^N)$ and $\Psi:H^2_{r}(\mathbb{R}^N)\to \mathbb{R}$ is given by
\begin{equation}\label{eqn:Psi-define}
\Psi(u)=\frac{1}{2}\int_{\mathbb{R}^N}|u|^2dx.
\end{equation}
As a consequence, it infers that
\begin{equation}\label{eqn:Jun-varphi}
\int_{\mathbb R^N}\Delta u_n \Delta \varphi +\nabla u_n\cdot \nabla \varphi-\lambda_n u_n \varphi-\mu|u_n|^{p-2}u_n\varphi-|u_n|^{4^*-2}u_n\varphi dx=o_n(1)\|\varphi\|
\end{equation}
for $\varphi\in H^2_{r}({\mathbb R^N})$. Hence, due to the boundedness of $\{u_n\}$ in $H^2_{r}(\mathbb R^N)$, we see that
\begin{equation}\label{eqn:Iun-un-on1-L2sub}
\|\Delta u_n\|^2_2+\|\nabla u_n\|^2_2-\mu\|u_n\|^p_p-\|u_n\|^{4^*}_{4^*}=\lambda_n\|u_n\|^2_2+o_n(1)=\lambda_n c+o_n(1),
\end{equation}
which indicates that $\{\lambda_n\}$ is a bounded sequence, and thus, up to a subsequence, there exists $\lambda\in\mathbb R$ such that $\lambda_n\to \lambda$ as $n\to \infty$. Now, we claim $\lambda<0$. Indeed, it follows from (\ref{eqn:Iun-un-on1-L2sub}) and the fact $\lim_{n\to\infty}I(u_n)=a<0$ that
$$
\begin{aligned}
\lambda_n c=\lambda_n\|u_n\|^2_2&=\|\Delta u_n\|^2_2+\|\nabla u_n\|^2_2-\mu\|u_n\|^p_p-\|u_n\|^{4^*}_{4^*}+o_n(1)\\
&=\|\Delta u_n\|^2_2+\|\nabla u_n\|^2_2-\frac{2\mu}{p}\|u_n\|^p_p-\frac{2}{4^*}\|u_n\|^{4^*}_{4^*}+\frac{(2-p)\mu}{p}\|u_n\|^p_p\\
&\ \ \ +\frac{2-4^*}{4^*}\|u_n\|^{4^*}_{4^*}+o_n(1)\\
&=2I(u_n)+\frac{(2-p)\mu}{p}\|u_n\|^p_p+\frac{2-4^*}{4^*}\|u_n\|^{4^*}_{4^*}+o_n(1)\\
&=2a+\frac{(2-p)\mu}{p}\|u_n\|^p_p+\frac{2-4^*}{4^*}\|u_n\|^{4^*}_{4^*}+o_n(1)\leq 2a+o_n(1).
\end{aligned}
$$
Passing to the limit as $n\to \infty$ in both sides of the above inequality, we have $\lambda\leq \frac{2a}{c}<0$.

Denote by $\mathcal M(\mathbb R^N)$ the space of finite measure in $\mathbb R^N$. Using the concentration-compactness principle due to \cite[Lemma 2.1]{Alves-doO2002}, we can find there exist two bounded positive measures $\kappa,\nu\in \mathcal M(\mathbb R^N)$ such that
$$
|\Delta u_n|^2\rightharpoonup \kappa\ \ \ \mbox{and}\ \ \ |u_n|^{4^*}\rightharpoonup\nu\ \ \ \mbox{in}\ \ \ \mathcal M(\mathbb R^N)\ \ \ \mbox{as}\ \ \ n\to\infty.
$$
Then there exist an at most countable index set $\mathcal J$, a family $\{x_j\}_{j\in\mathcal J}$ of distinct points in $\mathbb R^N$ and a family $\{\nu_j\}_{j\in\mathcal J}$ of positive numbers such that
$$
\nu=|u|^{4^*}+\sum_{j \in \mathcal J} \nu_j \delta_{x_j},
$$
where $\delta_{x_j}$ is the Dirac mass at the point $x_j$. Moreover, we have
$$
\kappa \geq |\Delta u|^2 + \sum_{j \in \mathcal J} \kappa_j \delta_{x_j},
$$
for some family $\{\kappa_j\}_{j \in \mathcal J}$ of positive numbers satisfying
\begin{equation}\label{eqn:S-nuj-kappa-j}
S{\nu_j}^\frac{2}{4^*} \leq \kappa_j, \ \ \  j\in \mathcal J.
\end{equation}
In particular, $\sum_{j \in \mathcal J} \nu_j^\frac{2}{4^*}<\infty$. In view of Claim 2.3 in \cite{Alves-doO2002}, we know that $\mathcal J$ is either empty or finite. Suppose that $\mathcal J$ is nonempty but finite. For $\rho>1$ and some $j\in\mathcal J$, consider one cut-off function $\widetilde{\varphi}_\rho\in C^\infty_0\left(\mathbb R^N,[0,1]\right)$ such that
$$
\varphi^j_\rho(x):=\widetilde{\varphi}_\rho\Big(\frac{x-x_j}{\rho}\Big)=\left\{\begin{array}{cc}
1, & \mbox{if}\ \ |x-x_{j}|\leq\frac{\rho}{2};\\
0, & \mbox{if}\ \ |x-x_{j}|\geq\rho,
\end{array}
\right.		
$$
$|\nabla \varphi^j_\rho|\leq\frac{1}{\sqrt{\rho}}$ and $|\Delta \varphi^j_\rho|\leq\frac{1}{\rho}$ (see also \cite[Claim 2.3]{Alves-doO2002}). Moreover, by the boundedness of $\{u_n\}$ in $H^2_{r}(\mathbb R^N)$, we know that $\{\varphi^j_\rho u_n\}$ is also bounded in  $H^2_{r}(\mathbb R^N)$. Therefore,
$$
\begin{aligned}
o_n(1)&=\langle I'(u_n),\varphi^j_\rho u_n \rangle\\
&=\int_{\mathbb R^N}\!\!\!\!\!\Delta u_n\Delta(\varphi^j_\rho u_n)dx+\int_{\mathbb R^N}\!\!\!\!\!\nabla u_n\cdot\nabla(\varphi^j_\rho u_n)dx-\mu\int_{\mathbb R^N}\!\!\!\!|u_n|^p\varphi^j_\rho dx-\int_{\mathbb R^N}\!\!\!\!|u_n|^{4^*}\varphi^j_\rho dx.\\
\end{aligned}
$$
By straightforward calculations, we have
\begin{equation}\label{eqn:Delta-Delta-varphi-calcula}
\int_{\mathbb R^N}\Delta u_n\Delta(\varphi^j_\rho u_n)dx=\int_{\mathbb R^N}\!\!u_n\Delta u_n\Delta\varphi^j_\rho dx+2\int_{\mathbb R^N}\!\!\Delta u_n\nabla\varphi^j_\rho\cdot\nabla u_ndx+\int_{\mathbb R^N}\!\!\varphi^j_\rho|\Delta u_n|^2 dx
\end{equation}
and
$$
\int_{\mathbb R^N}\nabla u_n\cdot\nabla(\varphi^j_\rho u_n)dx=\int_{\mathbb R^N}u_n\nabla u_n\cdot\nabla\varphi^j_\rho dx+\int_{\mathbb R^N}\varphi^j_\rho|\nabla u_n|^2 dx.
$$
For the first term on the right in (\ref{eqn:Delta-Delta-varphi-calcula}), by using H\"{o}lder's inequality with exponents
$2$, $2$ and the boundedness of $\{u_n\}$ in $H^2_{r}(\mathbb R^N)$, we derive that
\begin{equation}\label{eqn:un-nablaun-nablavarphirho}
\begin{aligned}
\Big|\int_{\mathbb R^N}u_n\Delta u_n\Delta\varphi^j_\rho dx\Big|&\leq\Big(\int_{\mathbb R^N}u^2_n|\Delta \varphi^j_\rho|^2dx\Big)^\frac{1}{2}\Big(\int_{\mathbb R^N}|\Delta u_n|^2dx\Big)^\frac{1}{2}\\
&\leq C\Big(\int_{\mathbb R^N}u^2_n|\Delta \varphi^j_\rho|^2dx\Big)^\frac{1}{2}\\
&= C \Big(\int_{B_\rho(x_{j})\setminus B_\frac{\rho}{2}(x_{j})}u^2_n|\Delta \varphi^j_\rho|^2dx\Big)^\frac{1}{2}.
\end{aligned}
\end{equation}
Noting that
$$
\begin{aligned}
&\Big|\int_{B_\rho(x_{j})\setminus B_\frac{\rho}{2}(x_{j})}u^2_n|\Delta \varphi^j_\rho|^2dx-\int_{B_\rho(x_{j})\setminus B_\frac{\rho}{2}(x_{j})}u^2|\Delta \varphi^j_\rho|^2dx\Big|\\
&\leq\frac{1}{\rho^2}\int_{B_\rho(x_{j})\setminus B_\frac{\rho}{2}(x_{j})}|u_n+u||u_n-u|dx\\
&\leq\frac{1}{\rho^2}\Big(\int_{B_\rho(x_{j})\setminus B_\frac{\rho}{2}(x_{j})}\!\!\!\!\!\!\!\!\!\!|u_n-u|^pdx\Big)^\frac{1}{p}\Big(\int_{B_\rho(x_{j})\setminus B_\frac{\rho}{2}(x_{j})}\!\!\!\!\!\!\!\!\!\!|u_n+u|^{4^*}dx\Big)^\frac{1}{4^*}\Big(\int_{B_\rho(x_{j})\setminus B_\frac{\rho}{2}(x_{j})}\!\!\!\!\!\!1dx\Big)^{1-\frac{1}{p}-\frac{1}{4^*}}\\
&\leq\frac{1}{\rho^2}\Big(\int_{\mathbb R^N}|u_n-u|^pdx\Big)^\frac{1}{p}\Big(\int_{\mathbb R^N}|u_n+u|^{4^*}dx\Big)^\frac{1}{4^*}\Big(\int_{B_\rho(x_{j})}1dx\Big)^{1-\frac{1}{p}-\frac{1}{4^*}}\\
&\leq C\omega^{1-\frac{1}{p}-\frac{1}{4^*}} \rho^{(1-\frac{1}{p}-\frac{1}{4^*})N-2}\Big(\int_{\mathbb R^N}|u_n-u|^pdx\Big)^\frac{1}{p}\to 0\ \ \mbox{as}\ \ n\to\infty,
\end{aligned}
$$
it implies that
$$
\begin{aligned}
\lim\limits_{\rho\to 0}\lim\limits_{n\to\infty}\int_{B_\rho(x_{j})\setminus B_\frac{\rho}{2}(x_{j})}u^2_n|\Delta \varphi^j_\rho|^2dx&=\lim\limits_{\rho\to 0}\int_{B_\rho(x_{j})\setminus B_\frac{\rho}{2}(x_{j})}u^2|\Delta \varphi^j_\rho|^2dx\\
&\leq\lim\limits_{\rho\to 0}\frac{1}{\rho^2}\Big(\int_{B_\rho(x_{j})}|u|^{4^*}dx\Big)^\frac{2}{4^*}\Big(\int_{B_\rho(x_{j})}1dx\Big)^{1-\frac{2}{4^*}}\\
&=\lim\limits_{\rho\to 0} \omega^{1-\frac{2}{4^*}}\rho^{(1-\frac{2}{4^*})N-2}\Big(\int_{B_\rho(x_{j})}|u|^{4^*}dx\Big)^\frac{2}{4^*}\\
&\leq\lim\limits_{\rho\to 0} \omega^{1-\frac{2}{4^*}}\rho^2 \Big(\int_{\mathbb R^N}|u|^{4^*}dx\Big)^\frac{2}{4^*}=0,
\end{aligned}
$$
where $\omega$ is the area of the unit sphere in $\mathbb R^N$. Thus, we have
\begin{equation}\label{eqn:un-nablaun-nablavarphi-rho-to-0}
\lim\limits_{\rho\to 0}\lim\limits_{n\to\infty}\int_{\mathbb R^N}u_n\Delta u_n\Delta\varphi^j_\rho dx=0.
\end{equation}
Arguing as (\ref{eqn:un-nablaun-nablavarphi-rho-to-0}), there hold
$$
\begin{aligned}
\Big|\int_{\mathbb R^N}u_n\nabla u_n\cdot\nabla\varphi^j_\rho dx\Big|&\leq\Big(\int_{\mathbb R^N}u^2_n|\nabla \varphi^j_\rho|^2dx\Big)^\frac{1}{2}\Big(\int_{\mathbb R^N}|\nabla u_n|^2dx\Big)^\frac{1}{2}\\
&\leq C\Big(\int_{\mathbb R^N}u^2_n|\nabla \varphi^j_\rho|^2dx\Big)^\frac{1}{2}\\
&= C \Big(\int_{B_\rho(x_{j})\setminus B_\frac{\rho}{2}(x_{j})}u^2_n|\nabla \varphi^j_\rho|^2dx\Big)^\frac{1}{2},
\end{aligned}
$$
$$
\begin{aligned}
&\Big|\int_{B_\rho(x_{j})\setminus B_\frac{\rho}{2}(x_{j})}u^2_n|\nabla \varphi^j_\rho|^2dx-\int_{B_\rho(x_{j})\setminus B_\frac{\rho}{2}(x_{j})}u^2|\nabla \varphi^j_\rho|^2dx\Big|\\
&\leq\frac{1}{\rho}\int_{B_\rho(x_{j})\setminus B_\frac{\rho}{2}(x_{j})}|u_n+u||u_n-u|dx\\
&\leq\frac{1}{\rho}\Big(\int_{B_\rho(x_{j})\setminus B_\frac{\rho}{2}(x_{j})}\!\!\!\!\!\!\!\!\!\!|u_n-u|^pdx\Big)^\frac{1}{p}\Big(\int_{B_\rho(x_{j})\setminus B_\frac{\rho}{2}(x_{j})}\!\!\!\!\!\!\!\!\!\!|u_n+u|^{4^*}dx\Big)^\frac{1}{4^*}\Big(\int_{B_\rho(x_{j})\setminus B_\frac{\rho}{2}(x_{j})}\!\!\!\!\!\!1dx\Big)^{1-\frac{1}{p}-\frac{1}{4^*}}\\
&\leq\frac{1}{\rho}\Big(\int_{\mathbb R^N}|u_n-u|^pdx\Big)^\frac{1}{p}\Big(\int_{\mathbb R^N}|u_n+u|^{4^*}dx\Big)^\frac{1}{4^*}\Big(\int_{B_\rho(x_{j})}1dx\Big)^{1-\frac{1}{p}-\frac{1}{4^*}}\\
&\leq C\omega^{1-\frac{1}{p}-\frac{1}{4^*}}\rho^{(1-\frac{1}{p}-\frac{1}{4^*})N-1}\Big(\int_{\mathbb R^N}|u_n-u|^pdx\Big)^\frac{1}{p}\to 0\ \ \mbox{as}\ \ n\to\infty
\end{aligned}
$$
and
$$
\begin{aligned}
\lim\limits_{\rho\to 0}\lim\limits_{n\to\infty}\int_{B_\rho(x_{j})\setminus B_\frac{\rho}{2}(x_{j})}u^2_n|\nabla \varphi^j_\rho|^2dx&=\lim\limits_{\rho\to 0}\int_{B_\rho(x_{j})\setminus B_\frac{\rho}{2}(x_{j})}u^2|\nabla \varphi^j_\rho|^2dx\\
&\leq\lim\limits_{\rho\to 0}\frac{1}{\rho}\Big(\int_{B_\rho(x_{j})}|u|^{4^*}dx\Big)^\frac{2}{4^*}\Big(\int_{B_\rho(x_{j})}1dx\Big)^{1-\frac{2}{4^*}}\\
&=\lim\limits_{\rho\to 0} \omega^{1-\frac{2}{4^*}}\rho^{(1-\frac{2}{4^*})N-1}\Big(\int_{B_\rho(x_{j})}|u|^{4^*}dx\Big)^\frac{2}{4^*}\\
&=\lim\limits_{\rho\to 0} \omega^{1-\frac{2}{4^*}}\rho^{3}\Big(\int_{B_\rho(x_{j})}|u|^{4^*}dx\Big)^\frac{2}{4^*}\\
&\leq\lim\limits_{\rho\to 0}\rho^{3} \omega^{1-\frac{2}{4^*}}\|u\|^2_{4^*}=0,
\end{aligned}
$$
which implies that
$$
\lim\limits_{\rho\to 0}\lim\limits_{n\to\infty}\int_{\mathbb R^N}u_n\nabla u_n\cdot\nabla\varphi^j_\rho dx=0.
$$
At the same time, the following fact is given in \cite[Page 873: line 2]{Chabrowski-doO2002}:
$$
\lim\limits_{\rho\to 0}\lim\limits_{n\to \infty}\int_{\mathbb R^N}\!\!\Delta u_n\nabla\varphi^j_\rho\cdot\nabla u_ndx=0.
$$
Before completing our main purpose, we need to complete the following necessary estimates. Utilizing H\"{o}lder's inequality with exponents $\frac{4^*}{p}$ and $\frac{4^*}{4^*-p}$, we get that
$$
\begin{aligned}
\int_{B_\rho(x_{j})\setminus B_\frac{\rho}{2}(x_{j})}|u|^p\varphi^j_\rho dx&\leq\Big(\int_{B_\rho(x_{j})\setminus B_\frac{\rho}{2}(x_{j})}|u|^{4^*} dx\Big)^\frac{p}{4^*}\Big(\int_{B_\rho(x_{j})\setminus B_\frac{\rho}{2}(x_{j})}|\varphi^j_\rho|^\frac{4^*}{4^*-p} dx\Big)^\frac{4^*-p}{4^*}\\
&\leq\Big(\int_{B_\rho(x_{j})\setminus B_\frac{\rho}{2}(x_{j})}|u|^{4^*} dx\Big)^\frac{p}{4^*}\Big(\int_{B_\rho(x_{j})}1 dx\Big)^\frac{4^*-p}{4^*}\\
&=\omega^\frac{4^*-p}{4^*} \rho^{\frac{(4^*-p)N}{4^*}} \Big(\int_{B_\rho(x_{j})\setminus B_\frac{\rho}{2}(x_{j})}|u|^{4^*} dx\Big)^\frac{p}{4^*},
\end{aligned}
$$
combining this with $2+\frac{8}{N}<\frac{(4^*-p)N}{4^*}$ and
$$
\lim\limits_{n\to\infty}\int_{\mathbb R^N}|u_n|^p\varphi^j_\rho dx=\int_{\mathbb R^N}|u|^p\varphi^j_\rho dx=\int_{B_\rho(x_j)\setminus B_\frac{\rho}{2}(x_j)}|u|^p\varphi^j_\rho dx,
$$
and again by the absolute continuity of the Lebesgue integral, we have
$$
\lim\limits_{\rho\to 0}\lim\limits_{n\to\infty}\int_{\mathbb R^N}|u_n|^p\varphi^j_\rho dx=\lim\limits_{\rho\to 0}\int_{\mathbb R^N}|u|^p\varphi^j_\rho dx=0.
$$
All together with the above estimates, we obtain that
$$
\begin{aligned}
&\lim\limits_{\rho\to 0}\Big(\int_{\mathbb R^N}|u|^{4^*}\varphi^j_\rho dx+\sum_{j\in\mathcal J}\int_{\mathbb R^N}\nu_j\delta_{x_j}\varphi^j_\rho dx\Big)\\
&=\lim\limits_{\rho\to 0}\int_{\mathbb R^N}\varphi^j_\rho d\nu=\lim\limits_{\rho\to 0}\lim\limits_{n\to\infty}\int_{\mathbb R^N}|u_n|^{4^*}\varphi^j_\rho dx\\
&=\lim\limits_{\rho\to 0}\lim\limits_{n\to\infty}\Big(\int_{\mathbb R^N}|\Delta u_n|^2\varphi^j_\rho dx+\int_{\mathbb R^N}|\nabla u_n|^2 \varphi^j_\rho dx\Big)\\
&\geq\lim\limits_{\rho\to 0}\lim\limits_{n\to\infty}\int_{\mathbb R^N}|\Delta u_n|^2 \varphi^j_\rho dx=\lim\limits_{\rho\to 0}\int_{\mathbb R^N}\varphi^j_\rho d\kappa\\
&\geq\lim\limits_{\rho\to 0}\Big(\int_{\mathbb R^N} |\Delta u|^2 \varphi^j_\rho dx+\sum_{j\in\mathcal J}\int_{\mathbb R^N}\kappa_j\delta_{x_j}\varphi^j_\rho dx\Big)\geq \kappa_j.
\end{aligned}
$$
Again by the absolute continuity of the Lebesgue integral we deduce that $\nu_j\geq \kappa_j$. Together this with (\ref{eqn:S-nuj-kappa-j}), we deduce that
$\kappa_{j}\geq S{\nu_{j}}^\frac{2}{4^*}\geq S\kappa_{j}^\frac{2}{4^*}$, i.e.
$\kappa_{j}\geq S^\frac{N}{4}$.
So,
$$
\begin{aligned}
r_*\geq\limsup\limits_{n\to\infty}\Big(\|\Delta u_n\|^2_2+\|\nabla u_n\|^2_2\Big)&\geq\lim\limits_{n\to\infty}\int_{\mathbb R^N}|\Delta u_n|^2\varphi^j_\rho dx\\
&=\int_{\mathbb R^N}\varphi^j_\rho d\kappa\\
&\geq \int_{\mathbb R^N}|\Delta u|^2\varphi^j_\rho dx+\sum_{j\in\mathcal J}\int_{\mathbb R^N}\kappa_{j}\delta_{x_{j}}\varphi^j_{\rho} dx\\
&\geq \kappa_{j}\geq S^\frac{N}{4},
\end{aligned}
$$
which is a contradiction with (\ref{eqn:r0-gamma-s-relation}). Such a conclusion implies that $\mathcal J=\emptyset$. Then,
$$
\lim\limits_{n\to\infty}\int_{\mathbb R^N}|u_n|^{4^*}\varphi^j_\rho dx=\int_{\mathbb R^N}\varphi^j_\rho d\nu=\int_{\mathbb R^N}|u|^{4^*}\varphi^j_\rho dx,
$$
which means
\begin{equation}\label{eqn:un-u-loc-4^*}
u_n\to u \ \ \ \mbox{in}\ \ \  L^{4^*}_{loc}(\mathbb R^N).
\end{equation}
On the other hand, taking into account of $\{u_n\}$ is bounded in $H^2_{r}(\mathbb R^N)$, the Radial Lemma A.II in \cite{Berestycki-Lions1983} and $H^2_{r}(\mathbb R^N)\hookrightarrow H^1(\mathbb R^N)$, we deduce that
$$
|u_n(x)|\leq \frac{C_1\|u_n\|_{H^2}}{|x|^\frac{N-1}{2}}\leq \frac{C_2}{|x|^\frac{N-1}{2}}\ \ \ a.e.\ \ \ \mbox{in}\ \mathbb R^N,
$$
so, one has
$$
|u_n(x)|^{4^*}\leq \frac{C_3}{|x|^\frac{N(N-1)}{N-4}}\ \ \ a.e.\ \ \ \mbox{in}\ \mathbb R^N.
$$
Recalling that $\frac{C_3}{|\cdot|^\frac{N(N-1)}{N-4}}\in L^1\!\!\left(\mathbb R^N\setminus B_R(0)\right)$ and $u_n\to u$ a.e. in $\mathbb R^N\setminus B_R(0)$, the Lebesgue dominated convergence theorem gives
\begin{equation}\label{eqn:un-u-4^*}
u_n\to u\ \ \ \mbox{in}\ \ \ L^{4^*}\!\!\left(\mathbb R^N \setminus B_R(0)\right).
\end{equation}
From (\ref{eqn:un-u-loc-4^*}) and (\ref{eqn:un-u-4^*}), we conclude that
$$
u_n\to u\ \ \ \mbox{in}\ \ \ L^{4^*}(\mathbb R^N).
$$
Now, using $u_n\to u$ in $L^{4^*}(\mathbb R^N)$ and $u_n\to u$ in $L^p(\mathbb R^N)$, we obtain
$$
\lim\limits_{n\to\infty}\int_{\mathbb R^N}\left(\mu|u_n|^p+|u_n|^{4^*}\right)dx=\int_{\mathbb R^N}\left(\mu|u|^p+|u|^{4^*}\right)dx.
$$
Combine this with (\ref{eqn:Iun-un-on1-L2sub}) and the fact that $u$ is a nontrivial solution of the following equation
$$
{\Delta}^2u- \Delta u-\lambda u = \mu |u|^{p-2}u+ |u|^{4^*-2}u\ \ \mbox{in}\ \mathbb{R}^N,
$$
we have
\begin{equation}\label{eqn:un-u-H2rad}
\begin{aligned}
\lim\limits_{n\to\infty}\Big(\|\Delta u_n\|^2_2 +\|\nabla u_n\|^2_2-\lambda\|u_n\|^2_2\Big)&=\lim\limits_{n\to\infty}\Big(\mu\|u_n\|^p_p+\|u_n\|^{4^*}_{4^*}\Big)\\
&=\mu\|u\|^p_p+\|u\|^{4^*}_{4^*}\\
&=\|\Delta u\|^2_2 +\|\nabla u\|^2_2-\lambda\|u\|^2_2.
\end{aligned}
\end{equation}
As $\lambda<0$, (\ref{eqn:un-u-H2rad}) yields that
$$
\begin{aligned}
-\lambda\|u\|_2^2 & \leq \liminf _{n \rightarrow \infty}\Big(-\lambda\|u_n\|_2^2\Big) \\
&\leq \limsup _{n \rightarrow \infty}\Big(-\lambda\|u_n\|_2^2\Big) \\
& \leq \limsup _{n \rightarrow \infty}\Big(-\lambda\|u_n\|_2^2\Big)+\liminf _{n \rightarrow \infty}\Big(\|\Delta u_n\|^2_2+\|\nabla u_n\|^2_2\Big)-\Big(\|\Delta u\|^2_2+\|\nabla u\|^2_2\Big)\\
& \leq \limsup _{n \rightarrow \infty}\Big(\|\Delta u_n\|^2_2+\|\nabla u_n\|^2_2-\lambda\|u_n\|_2^2\Big)-\Big(\|\Delta u\|^2_2+\|\nabla u\|^2_2\Big)\\
& =-\lambda\|u\|_2^2,
\end{aligned}
$$
and so $\|u_n\|^2_2\to \|u\|^2_2=c$ as $n\to \infty$. Together this with (\ref{eqn:un-u-H2rad}) again, we conclude that
$$
\|\Delta u_n\|^2_2+\|\nabla u_n\|^2_2\to \|\Delta u\|^2_2+\|\nabla u\|^2_2\ \ \mbox{as}\ \ n\to \infty.
$$
Hence, $u_n\to u$ in $H^2_{r}(\mathbb R^N)$.
\end{proof}

\textbf{Proof of Theorem \ref{Thm:normalized-Subcritical-solutions-existence}.} Through Lemmas \ref{Lem:Ju-bound} and \ref{Lem:local-ps-satisfies}, we can obtain Theorem \ref{Thm:normalized-Subcritical-solutions-existence} directly.
\qed

\section{Proof of Theorem \ref{Thm:normalized-Subcritical-solutions-infinity}}\label{sec:proof-main-theorem-2}

Now, we present a minimax theorem which is proved by Jeanjean and Lu \cite[Section 2]{Jeanjean-Lu2019}. In order to formulate the minimax theorem, some notations are needed. Let $\mathbb X$ be a real Banach space with norm $\|\cdot\|_\mathbb X$ and $\mathbb H$ be a real Hilbert space with inner product $(\cdot,\cdot)_{\mathbb H}$. In the sequel, let us identify $\mathbb H$ with its dual space and assume that $\mathbb X$ is embedded continuously in $\mathbb H$. For any $k>0$, define the manifold
$$
\mathcal Q:=\Big\{u\in \mathbb X\ |\ (u,u)_{\mathbb H}= k\Big\},
$$
which is endowed with the topology inherited from $\mathbb X$.
Clearly, the tangent space of $\mathcal Q$ at a point $u\in\mathcal Q$ is defined by
$$
T_u\mathcal Q:=\Big\{v\in \mathbb X\ |\ (u,v)_{\mathbb H}=0\Big\}.
$$
Let $J\in C^1(\mathbb X,\mathbb R)$, then $J|_{\mathcal Q}$ is a functional of class $C^1$ on $\mathcal Q$. The norm of the derivative of
$J|_{\mathcal Q}$ at any point $u\in \mathcal Q$ is defined by
$$
\|J'|_{\mathcal Q}(u)\|:=\sup\limits_{\|v\|_\mathbb X\leq 1,\ v\in T_u\mathcal Q}\left|\langle J'(u),v \rangle\right|.
$$
A point $u\in \mathcal Q$ is said to be a critical point of $J|_{\mathcal Q}$ if $\left(J|_{\mathcal Q}\right)'(u)=0$ (or, equivalently, $\|\left(J|_{\mathcal Q}\right)'(u)\|=0$). A number $a\in\mathbb R$ is called a critical value of $J|_{\mathcal Q}$ if $J|_{\mathcal Q}$ has a critical point $u\in\mathcal Q$ such that $a = J(u)$.

Noting that $\mathcal Q$ is symmetric with respect to $0\in\mathbb X$ and $0\notin \mathcal Q$, we introduce the notation of the genus. Let $\Sigma$ be the family of closed symmetric subsets of $\mathcal Q$. For any nonempty set $\Gamma\in \Sigma$, the genus $\mathcal G(\mathcal Q)$ of $\mathcal Q$ is defined as the least integer $m\geq 1$ for which there exists an odd continuous mapping $\phi:\Gamma\to \mathbb R^m\setminus\{0\}$. We set $\mathcal G(\Gamma)=+\infty$ if such an integer does not exist and $\mathcal G(\Gamma)=0$ if $\Gamma=\emptyset$. For each $m\in \mathbb N^+$, let
$$
\Gamma_m:=\Big\{\Gamma\in\Sigma\ |\ \mathcal G(\Gamma)\geq m\Big\}.
$$
Now, we are ready to state the minimax theorem which will be used late on, which is a particular case of \cite[Theorem 2.1]{Jeanjean-Lu2019}.
\begin{proposition} \label{Pro:minimax-theorem}
Let $J:\mathbb X\to\mathbb R$ be an even functional of class $C^1$. Assume that $J|_{\mathcal Q}$ is bounded from below and satisfies the $(PS)_a$ condition for all $a<0$, and that $\Gamma_m\neq\emptyset$ for each $m\in \mathbb N$. Then the minimax values $-\infty<a_1\leq a_2\leq\cdots\leq a_m\leq \cdots$ can be defined as follows:
$$
a_m:=\inf\limits_{\Gamma\in\Gamma_m}\sup\limits_{u\in\Gamma}J(u),\ \ \ m\geq 1,
$$
and the following statements hold:
\begin{itemize}
\item[(i)]  $a_m$ is a critical value of $J|_{\mathcal Q}$ provided $a_m<0$;
\item[(ii)] denote by $K_a$ the set of critical points of $J|_{\mathcal Q}$ at one level $a\in\mathbb R$. If
$$
a_m=a_{m+1}=\cdots=a_{m+l-1}=:a<0\ \ \mbox{for}\ \ m,\ l\geq 1,
$$
then $\mathcal G(K_a)\geq l$. In particularly, $I|_{\mathcal Q}$ has infinitely many critical points at the level $a$ if $l\geq 2$.
\item[(iii)] if $a_m<0$ for all $m\geq 1 $, then $a_m\to 0^-$ as $m\to+\infty$.
\end{itemize}
\end{proposition}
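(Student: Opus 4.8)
The plan is to obtain Proposition~\ref{Pro:minimax-theorem} as the $\mathbb{Z}_2$-symmetric (Ljusternik--Schnirelmann) minimax principle on the $C^1$ manifold $\mathcal{Q}$ driven by the Krasnoselskii genus; it is precisely the specialization of \cite[Theorem 2.1]{Jeanjean-Lu2019} to $k>0$ with the involution $u\mapsto -u$, so one may quote that result verbatim or reproduce its proof along the lines below. First I would record the genus facts that the argument rests on: monotonicity ($A\subseteq B\Rightarrow \mathcal{G}(A)\le \mathcal{G}(B)$), subadditivity ($\mathcal{G}(A\cup B)\le \mathcal{G}(A)+\mathcal{G}(B)$), invariance under odd homeomorphisms ($\mathcal{G}(\eta(A))=\mathcal{G}(A)$), and the excision/continuity property that a compact symmetric set $K$ admits a closed symmetric neighborhood $\mathcal{O}$ with $\mathcal{G}(\overline{\mathcal{O}})=\mathcal{G}(K)$. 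Since $\Gamma_{m+1}\subseteq\Gamma_m$ and $J|_{\mathcal{Q}}\ge \inf_{\mathcal{Q}} J>-\infty$, the numbers $a_m$ form a non-decreasing sequence bounded below, giving at once $-\infty<a_1\le a_2\le\cdots$, which is the opening assertion of the proposition.

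The analytic engine is an \emph{equivariant deformation lemma} on $\mathcal{Q}$: because $J$ is even and $\mathcal{Q}$ is symmetric, a pseudo-gradient field for $J|_{\mathcal{Q}}$ may be chosen odd and tangent to $\mathcal{Q}$, and integrating it yields, for each $a<0$ (at which $(PS)_a$ holds by hypothesis) and each symmetric neighborhood $\mathcal{O}$ of the critical set $K_a$, some $\epsilon>0$ and an odd homeomorphism $\eta$ of $\mathcal{Q}$ with $\eta(J^{a+\epsilon}\setminus\mathcal{O})\subseteq J^{a-\epsilon}$, where $J^c:=\{u\in\mathcal{Q}:J(u)\le c\}$. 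With this tool, (i) is immediate: if $a_m<0$ were a regular value ($K_{a_m}=\emptyset$) one takes $\mathcal{O}=\emptyset$, picks $\Gamma\in\Gamma_m$ with $\sup_\Gamma J<a_m+\epsilon$, and observes that $\eta(\Gamma)\in\Gamma_m$ while $\sup_{\eta(\Gamma)}J\le a_m-\epsilon$, contradicting the definition of $a_m$ as an infimum over $\Gamma_m$.

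For (ii), assume $a_m=\cdots=a_{m+l-1}=a<0$ and, for contradiction, $\mathcal{G}(K_a)\le l-1$; compactness of $K_a$ (from $(PS)_a$) furnishes a symmetric neighborhood $\mathcal{O}$ with $\mathcal{G}(\overline{\mathcal{O}})\le l-1$. Taking $\Gamma\in\Gamma_{m+l-1}$ with $\sup_\Gamma J<a+\epsilon$ and applying the excised deformation, subadditivity gives $\mathcal{G}(\overline{\Gamma\setminus\mathcal{O}})\ge (m+l-1)-(l-1)=m$, so $\eta(\overline{\Gamma\setminus\mathcal{O}})\in\Gamma_m$ yet has sup-value $\le a-\epsilon<a_m$, a contradiction; hence $\mathcal{G}(K_a)\ge l$, and $l\ge 2$ forces $K_a$ to be infinite. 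For (iii), suppose the $a_m$ stay $<0$ and converge to $\bar a<0$; the compact set $K_{\bar a}$ has finite genus $q:=\mathcal{G}(K_{\bar a})$, and excising a symmetric neighborhood $\mathcal{O}$ with $\mathcal{G}(\overline{\mathcal{O}})=q$ produces $\epsilon>0$ and an odd $\eta$ with $\eta(J^{\bar a+\epsilon}\setminus\mathcal{O})\subseteq J^{\bar a-\epsilon}$. Choosing $m$ so large that $a_m>\bar a-\epsilon$ and taking $\Gamma\in\Gamma_{m+q}$ with $\sup_\Gamma J<\bar a+\epsilon$, subadditivity again yields $\mathcal{G}(\overline{\Gamma\setminus\mathcal{O}})\ge m$, so $\eta(\overline{\Gamma\setminus\mathcal{O}})\in\Gamma_m$ has sup-value $\le \bar a-\epsilon<a_m$, contradicting the definition of $a_m$; thus $\bar a=0$ and $a_m\to 0^-$.

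The hard part will be the equivariant deformation lemma on the manifold rather than the genus bookkeeping: one must construct an odd, locally Lipschitz, bounded pseudo-gradient vector field tangent to $\mathcal{Q}$, excise a neighborhood of $K_a$, and use $(PS)_a$ to show the tangential gradient norm of $J|_{\mathcal{Q}}$ stays bounded away from zero on the annulus $\{a-\bar\epsilon\le J\le a+\bar\epsilon\}\setminus\mathcal{O}$, so that the induced flow lowers $J$ by a fixed amount in finite time while respecting the symmetry. Once that lemma is in place, every remaining step is routine manipulation with the four genus inequalities listed above.
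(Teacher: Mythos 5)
Your proposal is correct and follows the same route as the paper: the paper states this proposition without proof, presenting it verbatim as a particular case of \cite[Theorem 2.1]{Jeanjean-Lu2019}, which is exactly the identification you make at the outset. Your reconstruction of the underlying argument (genus properties plus an odd, tangent pseudo-gradient deformation on $\mathcal Q$, with the $(PS)_a$ condition for $a<0$ driving the excision arguments in (i)--(iii)) is the standard proof of that cited theorem and contains no gaps.
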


\begin{lemma} \label{Lem:some-properties}
For any $m\in\mathbb N^+$, there exists an $m$-dimensional subspace $E_m\subset H^2_{r}(\mathbb R^N)$ that has a basis of the form $\{u_1,\cdots,u_m\}\subseteq V_{r}(c)$ such that $\int_{\mathbb R^N} u_i u_jdx=0$ for $i\neq j$ and $T_m\subseteq V_{r}(c)$. Moreover, there exists $\mu_m:=\mu(m)>0$ such that $\max\limits_{u\in T_m}I(u)<0$ when $\mu\geq \mu_m$,
where $V_r(c)$ is given in (\ref{eqn:Vr0-define}) and
$$
T_m:=\Big\{s_1u_1+\cdots+s_mu_m:s_i\in\mathbb R,\ i=1,2,\cdots,m,\ \sum\limits_{i=1}^{m}s^2_i=1\Big\}.
$$
\end{lemma}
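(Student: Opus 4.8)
The plan is to realize $E_m$ through $m$ radially symmetric functions with \emph{mutually disjoint supports}: for such functions every integral occurring in $I$ decouples as a finite sum over the individual pieces, which simultaneously produces the $L^2$-orthogonality $\int u_iu_j\,dx=0$, the inclusion $T_m\subseteq V_r(c)$, and a diagonalized energy that can be driven below zero by enlarging $\mu$. One point should be recorded before starting, in order to avoid a circular choice of parameters: by the equality case of \eqref{eqn:remove-03} (attained exactly when $c=c_*$), the radius $r_*=r_{c_*}=\big(\tfrac{(2-p\gamma_p)4^*S^{4^*/2}}{2(4^*-p\gamma_p)}\big)^{1/(4^*-2)}$ depends only on $N,p,S$, the $\mu$-dependence of $c_*$ cancelling precisely; hence the constraint defining $V_r(c)$ in \eqref{eqn:Vr0-define} does not move as $\mu$ varies, and the whole construction below is carried out independently of $\mu$.

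\textbf{Construction of $E_m$ and $T_m$.} First I would fix $m$ smooth radial profiles $\phi_1,\dots,\phi_m\in C_c^\infty(\mathbb R^N)\cap H^2_r(\mathbb R^N)$ supported in pairwise disjoint annuli, rescaling their amplitudes so that $\|\phi_i\|_2^2=c$ for every $i$. Then I would apply a common $L^2$-preserving dilation $u_i:=\mathcal H(\phi_i,s)$, exactly as in the proof of Lemma \ref{Lem:Ju-bound}: this keeps $\|u_i\|_2^2=c$, scales each support by the same factor $e^{-s}$ (so disjointness is preserved), and gives $\|\Delta u_i\|_2^2+\|\nabla u_i\|_2^2=e^{4s}\|\Delta\phi_i\|_2^2+e^{2s}\|\nabla\phi_i\|_2^2\to 0$ as $s\to-\infty$. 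Choosing $s$ negative enough that $\max_{1\le i\le m}\big(\|\Delta u_i\|_2^2+\|\nabla u_i\|_2^2\big)<r_*^2$, each $u_i$ lies in $V_r(c)$, the $u_i$ are linearly independent, and $E_m:=\operatorname{span}\{u_1,\dots,u_m\}$ is $m$-dimensional. For $u=\sum_i s_iu_i\in T_m$ (so $\sum_i s_i^2=1$), orthogonality gives $\|u\|_2^2=\sum_i s_i^2\|u_i\|_2^2=c$, while disjointness of the supports of $\Delta u_i,\nabla u_i$ gives
\[
\|\Delta u\|_2^2+\|\nabla u\|_2^2=\sum_{i=1}^m s_i^2\big(\|\Delta u_i\|_2^2+\|\nabla u_i\|_2^2\big)\le \max_{1\le i\le m}\big(\|\Delta u_i\|_2^2+\|\nabla u_i\|_2^2\big)<r_*^2,
\]
so that $T_m\subseteq V_r(c)$.

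\textbf{Energy estimate and choice of $\mu_m$.} Because the generators have disjoint supports, for $u=\sum_i s_iu_i\in T_m$ one has $\|u\|_p^p=\sum_i|s_i|^p\|u_i\|_p^p$ and $\|u\|_{4^*}^{4^*}=\sum_i|s_i|^{4^*}\|u_i\|_{4^*}^{4^*}$, whence, discarding the nonpositive critical term,
\[
I(u)=\tfrac12\sum_{i}s_i^2\big(\|\Delta u_i\|_2^2+\|\nabla u_i\|_2^2\big)-\frac{\mu}{p}\sum_i|s_i|^p\|u_i\|_p^p-\frac1{4^*}\sum_i|s_i|^{4^*}\|u_i\|_{4^*}^{4^*}\le \tfrac12 K-\frac{\mu}{p}\,a\sum_{i}|s_i|^p,
\]
where $K:=\max_i(\|\Delta u_i\|_2^2+\|\nabla u_i\|_2^2)$ and $a:=\min_i\|u_i\|_p^p>0$ are now fixed numbers, independent of $\mu$. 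The map $(s_1,\dots,s_m)\mapsto\sum_i|s_i|^p$ is continuous and strictly positive on the compact sphere $\{\sum_i s_i^2=1\}$, hence bounded below by some $\delta_m>0$ (concretely $\delta_m=m^{1-p/2}$, by convexity of $t\mapsto t^{p/2}$ since $p>2$). Therefore $I(u)\le \tfrac12 K-\tfrac{\mu}{p}a\,\delta_m$ uniformly on $T_m$, and setting $\mu_m:=\tfrac{pK}{2a\,\delta_m}+1$ forces $\max_{u\in T_m}I(u)<0$ for every $\mu\ge\mu_m$.

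\textbf{Main obstacle.} The delicate point is not the final energy inequality but arranging that the \emph{entire} sphere $T_m$, rather than merely the individual generators $u_i$, lies inside the constraint set $V_r(c)$ while the generators remain orthogonal. The disjoint-support construction resolves exactly this, for it diagonalizes the quadratic form $\|\Delta\,\cdot\|_2^2+\|\nabla\,\cdot\|_2^2$ on $E_m$ and thereby reduces the constraint over all of $T_m$ to the single scalar requirement $\max_i(\|\Delta u_i\|_2^2+\|\nabla u_i\|_2^2)<r_*^2$; the $L^2$-preserving dilation $\mathcal H(\cdot,s)$ then renders this compatible with the fixed mass $c$, using crucially that $r_*$ does not depend on $\mu$. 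The only quantitative input in the energy step is the uniform lower bound $\sum_i|s_i|^p\ge\delta_m>0$ on the sphere, which prevents the subcritical term from degenerating anywhere on $T_m$ and so makes the negativity of $I$ uniform.
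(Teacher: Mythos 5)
Your proposal is correct and follows essentially the same strategy as the paper's own proof: radial functions with pairwise disjoint supports, normalized to mass $c$, pushed inside $V_r(c)$ by the $L^2$-preserving dilation $\mathcal H(\cdot,s)$ with $s$ sufficiently negative, so that the $L^2$, $L^p$, $L^{4^*}$ norms and the seminorms all decouple on $T_m$, giving the orthogonality, the inclusion $T_m\subseteq V_r(c)$, and the genus-$m$ sphere. The only divergence is the final negativity estimate: the paper invokes finite-dimensionality of $E_m$ to obtain positive infima $\alpha_m,\beta_m$ of the $L^p$ and $L^{4^*}$ integrals over a compact constraint set, whereas you discard the (favorable) critical term and use the disjoint-support splitting $\|u\|_p^p=\sum_i|s_i|^p\|u_i\|_p^p$ together with the power-mean bound $\sum_i|s_i|^p\ge m^{1-p/2}$ on the unit sphere; this is equally valid, slightly more elementary, and yields an explicit admissible $\mu_m$, and your preliminary observation that $r_*=r_{c_*}$ is independent of $\mu$ makes explicit a non-circularity point that the paper leaves implicit.
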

\begin{proof}
Let $\{v_1,\cdots,v_m\}\subseteq C^\infty_0(\mathbb R^N)\cap H^2_{r}(\mathbb R^N)$ be such that
$$
\|v_i\|^2_2=c,\ \ \|\Delta v_i\|^2_2+\|\nabla v_i\|^2_2<r^2_*,\ \ \  i=1,2,\cdots,m
$$
and
$$
supp\ v_i\cap supp\ v_j=\emptyset,\ \ \ \  i\neq j,\ i,j=1,2,\cdots,m,
$$
where $supp\ v_i$ is the support of $v_i$ for $i=1,2,\cdots,m$. For $1\leq i\leq m$ and $s\leq 0$, let
$$
\Phi(s,v_i)(x):=e^\frac{Ns}{2}v_i(e^sx),\ \ \ \  x\in\mathbb R^N,
$$
it is clear that $\Phi(s,v_i)\in H^2_{r}(\mathbb R^N)$ for $1\leq i\leq m$ and
$$
supp\ \Phi(s,v_i)\cap supp\ \Phi(s,v_j)=\emptyset,\ \ \ \  i\neq j,\ i,j=1,2,\cdots,m.
$$
Moreover, for $1\leq i\leq m$, since $s\leq 0$ and $supp\ \Phi(s,v_i)\subseteq \mathbb R^N$, we have
$$
\|\Phi(s,v_i)\|^2_2=c
$$
and
$$
\begin{aligned}
\|\Delta \Phi(s,v_i)\|^2_2+\|\nabla \Phi(s,v_i)\|^2_2&=e^{4s}\|\Delta v_i\|^2_2+e^{2s}\|\nabla v_i\|^2_2=:\omega<r^2_*.
\end{aligned}
$$
Hence, $\Phi(s,v_i)\in V_{r}(c)$ for any $1\leq i\leq m$ and $s\leq 0$. Setting $\tilde{v}:=\sum\limits_{i=1}^{m}s_i\Phi(s,v_i)$, for every $s_1,\cdots,s_m\in\mathbb R$ with $\sum\limits_{i=1}^{n}s^2_i=1$ , we deduce that
\begin{equation}\label{eqn:tilde-v-2-norm}
\int_{\mathbb R^N}|\tilde{v}|^2dx=\int_{\mathbb R^N}\Big|\sum\limits_{i=1}^{m}s_i\Phi(s,v_i)\Big|^2dx=\sum\limits_{i=1}^{m}s^2_i\int_{\mathbb R^N}|\Phi(s,v_i)|^2dx=c
\end{equation}
and
\begin{equation}\label{eqn:tilde-v-delta-nabla-norm}
\begin{aligned}
\int_{\mathbb R^N}|\Delta \tilde{v}|dx+\int_{\mathbb R^N}|\nabla\tilde{v}|dx&=\int_{\mathbb R^N}\Big|\Delta\sum\limits_{i=1}^{m}s_i\Phi(s,v_i)\Big|^2dx+\int_{\mathbb R^N}\Big|\nabla\sum\limits_{i=1}^{m}s_i\Phi(s,v_i)\Big|^2dx\\
&=\sum\limits_{i=1}^{m}s^2_i\int_{\mathbb R^N}\Big|\Delta\Phi(s,v_i)\Big|^2dx+\sum\limits_{i=1}^{m}s^2_i\int_{\mathbb R^N}\Big|\nabla\Phi(s,v_i)\Big|^2dx=\omega<r^2_*.
\end{aligned}
\end{equation}
Let $u_i:=\Phi(s,v_i)$ for $1\leq i\leq m$ and clearly, $\tilde{v}=\sum\limits_{i=1}^{m}s_i u_i\in T_m$. Meanwhile, it follows from (\ref{eqn:tilde-v-2-norm}) and (\ref{eqn:tilde-v-delta-nabla-norm}) that $\tilde{v}\in V_r(c)$. Thus,  we conclude that $T_m\subseteq V_{r}(c)$.

In view of $dim E_m=m$, all the norms are equivalent and define
$$
\alpha_m:=\mbox{inf}\Big\{\int_{\mathbb R^N}|v|^pdx:v\in S_r\Big(\frac{c}{\omega}\Big)\cap E_m,\|\Delta v\|^2_2+\|\nabla v\|^2_2=1\Big\}>0
$$
and
$$
\beta_m:=\mbox{inf}\Big\{\int_{\mathbb R^N}|v|^{4^*}dx:v\in S_r\Big(\frac{c}{\omega}\Big)\cap E_m,\|\Delta v\|^2_2+\|\nabla v\|^2_2=1\Big\}>0,
$$
where $v:=\frac{1}{\sqrt{\omega}}\tilde{v}$. Combining this with (\ref{eqn:Defn-Iu}), we refer that
$$
\begin{aligned}
I(\tilde{v})&=\frac{1}{2}\|\Delta \tilde{v}\|^2_2+\frac{1}{2}\|\nabla \tilde{v}\|^2_2-\frac{\mu}{p}\|\tilde{v}\|^p_p-\frac{1}{4^*}\|\tilde{v}\|^{4^*}_{4^*}\\
&=\frac{1}{2}\omega-\frac{\mu \omega^\frac{p}{2}}{p}\Big\|\frac{\tilde{v}}{\sqrt{\omega}}\Big\|^p_p-\frac{\omega^\frac{4^*}{2}}{4^*}\Big\|\frac{\tilde{v}}{\sqrt{\omega}}\Big\|^{4^*}_{4^*}\\
&\leq \frac{1}{2}\omega-\frac{\mu \omega^\frac{p}{2}}{p}\alpha_m-\frac{\omega^\frac{4^*}{2}}{4^*}\beta_m.
\end{aligned}
$$
Hence, we can choose $\mu_m>0$ such that
$$
I(\tilde{v})<0,\ \ \  \tilde{v}\in T_m
$$
for $\mu\geq \mu_m$.
\end{proof}
Let
$$
\widetilde{\Sigma}:=\Big\{A\subseteq V_{r}(c):A \ \mbox{is closed}\ \ \mbox{and}\ \   A=-A\Big\}
$$
and
$$
\widetilde{\Gamma}_m:=\left\{A\in \widetilde{\Sigma}:\mathcal G(A)\geq m\right\},
$$
where $\mathcal G(A)$ is the genus of $A$. If $\widetilde{\Gamma}_m \neq \emptyset$, we set
$$
a_m:=\inf\limits_{A\in\widetilde{\Gamma}_m}\sup\limits_{u\in A}I(u).
$$

In the following, we prove that $\widetilde{\Gamma}_m \neq \emptyset$ and $a_m\in (-\infty,0)$ for any $m\in \mathbb N^+$.
\begin{lemma}\label{lem:Sigma-neq-emptyset}
For any $m\in \mathbb N^+$, one has $\widetilde{\Gamma}_m\neq\emptyset$ and $a_m\in(-\infty,0)$.
\end{lemma}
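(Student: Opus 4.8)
The plan is to exhibit a single explicit set lying in $\widetilde{\Gamma}_m$ and then squeeze $a_m$ into $(-\infty,0)$ using the two lemmas already proved. The natural candidate is the set $T_m$ constructed in Lemma \ref{Lem:some-properties}. Indeed $T_m\subseteq V_{r}(c)$, and $T_m=-T_m$ since the defining constraint $\sum_{i=1}^m s_i^2=1$ is invariant under $s\mapsto -s$. Moreover $T_m$ is the image of the compact sphere $S^{m-1}\subset\mathbb R^m$ under the linear map $s\mapsto\sum_{i=1}^m s_i u_i$, hence compact and in particular closed. Therefore $T_m\in\widetilde{\Sigma}$.

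First I would compute the genus of $T_m$. Consider the map $\eta:S^{m-1}\to T_m$ defined by $\eta(s_1,\ldots,s_m)=\sum_{i=1}^m s_i u_i$, which is odd and continuous. Because the $u_i$ are built with pairwise disjoint supports, they are $L^2$-orthogonal and hence linearly independent, so $\eta$ is injective; being a continuous bijection from a compact space onto its image, $\eta$ is an odd homeomorphism of $S^{m-1}$ onto $T_m$. By the invariance of the genus under odd homeomorphisms together with the standard fact $\mathcal G(S^{m-1})=m$, we obtain $\mathcal G(T_m)=m$. Consequently $T_m\in\widetilde{\Gamma}_m$, which already shows $\widetilde{\Gamma}_m\neq\emptyset$.

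With this concrete element in hand, both bounds on $a_m$ follow quickly. For the upper bound I invoke Lemma \ref{Lem:some-properties}: for $\mu\geq\mu_m$ one has $\max_{u\in T_m}I(u)<0$, and since $T_m$ is compact and $I$ is continuous this maximum is attained, so
$$
a_m=\inf_{A\in\widetilde{\Gamma}_m}\sup_{u\in A}I(u)\leq\sup_{u\in T_m}I(u)=\max_{u\in T_m}I(u)<0.
$$
For the lower bound I use Lemma \ref{Lem:Ju-bound}, which gives $I(u)\geq m_*(c)>-\infty$ for all $u\in V_{r}(c)$. Any $A\in\widetilde{\Gamma}_m$ satisfies $\mathcal G(A)\geq m\geq1$, so $A\neq\emptyset$ and $A\subseteq V_{r}(c)$; hence $\sup_{u\in A}I(u)\geq m_*(c)$, and taking the infimum over $A$ yields $a_m\geq m_*(c)>-\infty$.

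The only nonroutine step is the genus computation $\mathcal G(T_m)=m$, and its crux is that the disjoint-support construction of the $u_i$ forces their linear independence, so that $\eta$ is a genuine odd homeomorphism and the genus transfers from the standard sphere $S^{m-1}$. Everything else is a direct application of Lemmas \ref{Lem:some-properties} and \ref{Lem:Ju-bound}.
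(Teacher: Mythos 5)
Your proof is correct and follows essentially the same route as the paper: exhibit $T_m$ from Lemma \ref{Lem:some-properties} as an element of $\widetilde{\Gamma}_m$ via the odd homeomorphism with $\mathcal S^{m-1}$, then squeeze $m_*(c)\leq a_m\leq\sup_{u\in T_m}I(u)<0$ using Lemmas \ref{Lem:Ju-bound} and \ref{Lem:some-properties}. The only difference is that you spell out the genus computation (disjoint supports $\Rightarrow$ linear independence $\Rightarrow$ odd homeomorphism) which the paper simply asserts with a citation; this is a welcome elaboration, not a different argument.
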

\begin{proof}
Let $T_m\subseteq V_{r}(c)$ be obtained by Lemma \ref{Lem:some-properties} and then, $T_m\in\widetilde{\Sigma}$. Denote $\mathcal S^{m-1}$ to be the unit sphere in $\mathbb R^m$, clearly, $T_m$ is homeomorphic to $\mathcal S^{m-1}$ (see also in \cite[Corollary 5.3]{Zhang-Zhang2022}). By using the properties of genus, we derive that $\mathcal G(T_m)=\mathcal G(\mathcal S^{m-1})=m$, which implies that $T_m\in\widetilde{\Gamma}_m$. Furthermore, combine with Lemmas \ref{Lem:Ju-bound} and \ref{Lem:some-properties}, there holds
$$
-\infty<m_*(c)\leq a_m\leq \sup\limits_{u\in T_m}I(u)<0
$$
\end{proof}

\textbf{Proof of Theorem \ref{Thm:normalized-Subcritical-solutions-infinity}.} From Lemma \ref{Lem:local-ps-satisfies}, Proposition \ref{Pro:minimax-theorem} and Lemma \ref{lem:Sigma-neq-emptyset}, we know that there exist at least $m$ couples $(u_j,\lambda_j)$ $(j=1,2,\ldots,m)$ of critical points for $I|_{V_r(c)}$ when $\mu\geq\mu_m$ and $0<c<c_*$. Moreover, for all $j=1,2,\ldots,m$, there hold
$$
I(u_j)<0,\ \ \|u_j\|^2_2=c\ \ \mbox{and}\ \ \lambda_j<0.
$$
In particular, we also have
$$
I(u_m)=a_m\to 0^-\ \ \mbox{as}\ \ m\to+\infty.
$$
\qed

\section{Proof of Theorem \ref{Thm:normalized-bScritical-solutions-L2super}}\label{sec:proof-main-theorem-3-and-4}

At the beginning of this section, we use the minimax approach to draw some conclusions.
\begin{lemma}\label{Lem:Delta-mathcalH-property}
Let $\bar{p}< p<4^*$ and $c,\mu>0$. For any fixed $u\in S_r(c)$, one has $I(\mathcal H(u,s))\to0^+$ as $s\to -\infty$ and $I(\mathcal H(u,s))\to-\infty$ as $s\to +\infty$.
\end{lemma}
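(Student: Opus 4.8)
The plan is to reduce the whole statement to the explicit scaling identity for $I$ along the curve $s\mapsto\mathcal H(u,s)$. Using $\mathcal H(u,s)(x)=e^{Ns/2}u(e^sx)$ together with the change of variables $y=e^sx$ (exactly the computation already carried out in the proof of Lemma \ref{Lem:Ju-bound}), the mass is preserved and one obtains
$$
I(\mathcal H(u,s))=\frac{e^{4s}}{2}\|\Delta u\|^2_2+\frac{e^{2s}}{2}\|\nabla u\|^2_2-\frac{\mu e^{2p\gamma_p s}}{p}\|u\|^p_p-\frac{e^{2\cdot 4^* s}}{4^*}\|u\|^{4^*}_{4^*},
$$
where $2p\gamma_p=\frac{N(p-2)}{2}$ and $2\cdot 4^*=\frac{N(4^*-2)}{2}$. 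The behaviour of $I(\mathcal H(u,s))$ is therefore dictated entirely by the four exponentials $e^{2s}$, $e^{4s}$, $e^{2p\gamma_p s}$ and $e^{2\cdot 4^* s}$, so the argument is purely a comparison of growth rates.

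First I would order the four exponents using the hypothesis $\bar p<p<4^*$. The condition $p>\bar p=2+\frac{8}{N}$ is equivalent to $2p\gamma_p=\frac{N(p-2)}{2}>4$, while $p<4^*$ gives $2p\gamma_p<\frac{N(4^*-2)}{2}=2\cdot 4^*$; hence $2<4<2p\gamma_p<2\cdot 4^*$. Thus the dispersion term $\|\nabla u\|^2_2$ carries the smallest exponent and the Sobolev-critical term $\|u\|^{4^*}_{4^*}$ the largest. Since $u\in S_r(c)$ with $c>0$ is nonzero, we also have $\|\nabla u\|^2_2>0$ and $\|u\|^{4^*}_{4^*}>0$, which is what will pin down the signs of the two limits.

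For $s\to-\infty$ I would factor out the slowest-decaying exponential $e^{2s}$, writing
$$
I(\mathcal H(u,s))=e^{2s}\Big[\frac{1}{2}\|\nabla u\|^2_2+\frac{e^{2s}}{2}\|\Delta u\|^2_2-\frac{\mu e^{(2p\gamma_p-2)s}}{p}\|u\|^p_p-\frac{e^{(2\cdot 4^*-2)s}}{4^*}\|u\|^{4^*}_{4^*}\Big].
$$
Because the exponents $2$, $2p\gamma_p-2$ and $2\cdot 4^*-2$ are all strictly positive, every term inside the bracket except the first vanishes as $s\to-\infty$, so the bracket tends to $\frac{1}{2}\|\nabla u\|^2_2>0$; multiplying by $e^{2s}\to0^+$ gives $I(\mathcal H(u,s))\to0^+$. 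Symmetrically, for $s\to+\infty$ I would factor out the fastest-growing exponential $e^{2\cdot 4^* s}$; since the remaining exponents $4-2\cdot 4^*$, $2-2\cdot 4^*$ and $2p\gamma_p-2\cdot 4^*$ are all negative, the bracket now tends to $-\frac{1}{4^*}\|u\|^{4^*}_{4^*}<0$, and multiplying by $e^{2\cdot 4^* s}\to+\infty$ yields $I(\mathcal H(u,s))\to-\infty$.

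The only genuinely delicate point is the sign in the first limit: it is not enough to know that $I(\mathcal H(u,s))\to0$, one must identify the slowest-decaying surviving term and verify that it is positive. This is precisely where the mixed-dispersion term $\|\nabla u\|^2_2$ is decisive, since it carries the lowest exponent $2$ and thereby controls the sign near $s=-\infty$; this is the mechanism the authors single out as their novel treatment of the $\Delta u$ dispersion. Everything else is a routine ordering of exponentials, and no compactness or variational machinery is needed for this lemma.
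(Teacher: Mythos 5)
Your proof is correct and takes essentially the same route as the paper's: both reduce the lemma to the scaling identity $I(\mathcal H(u,s))=\frac{e^{4s}}{2}\|\Delta u\|^2_2+\frac{e^{2s}}{2}\|\nabla u\|^2_2-\frac{\mu e^{2p\gamma_p s}}{p}\|u\|^p_p-\frac{e^{2\cdot 4^*s}}{4^*}\|u\|^{4^*}_{4^*}$ together with the exponent ordering $2<4<2p\gamma_p<2\cdot 4^*$ forced by $\bar p<p<4^*$, your factoring argument simply making explicit what the paper dismisses as ``obvious to see.'' One caveat on your closing commentary (not on the proof itself): the $\|\nabla u\|^2_2$ term is not actually decisive for the sign as $s\to-\infty$, since $2p\gamma_p>4$ means the $\|\Delta u\|^2_2$ term (exponent $4$) already dominates both negative terms, so the limit $0^+$ would hold even without the mixed dispersion term, exactly as in the $\beta=0$ setting of Ma and Chang.
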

\begin{proof}
Some direct calculations bring that
$$
\|\Delta\mathcal{H}(u,s)\|^2_2=e^{4s}\|\Delta u\|^2_2,\ \ \|\nabla\mathcal{H}(u,s)\|^2_2=e^{2s}\|\nabla u\|^2_2,\ \ \|\mathcal{H}(u,s)\|^p_p=e^{2p\gamma_p s}\|u\|^p_p
$$
and
$$
\|\mathcal{H}(u,s)\|^{4^*}_{4^*}=e^{24^* s}\|u\|^{4^*}_{4^*}.
$$
Since $\bar{p}<p<4^*$ implies that $p\gamma_p\geq2$, it is obvious to see that
\begin{equation}\label{eqn:I-to-0-infty}
\begin{aligned}
I(\mathcal H(u,s))&=\frac{e^{4s}}{2}\int_{\mathbb{R}^N}|\Delta u|^2dx+\frac{e^{2s}}{2}\int_{\mathbb{R}^N}|\nabla u|^2dx-\frac{\mu e^{2p\gamma_p s}}{p}\int_{\mathbb{R}^N}|u|^p dx-\frac{e^{24^* s}}{4^*}\int_{\mathbb{R}^N}|u|^{4^*}dx\\
&\ \ \ \begin{cases}\to 0^+, & s\to -\infty; \\
\to -\infty, & s\to +\infty. \\
\end{cases}
\end{aligned}
\end{equation}
\end{proof}

\begin{lemma}\label{Prop:I-u-maximum}
Let $\bar{p}< p<4^*$ and $c,\mu>0$. For any $u\in S_r(c)$, we have:
\begin{itemize}
\item[(i)] there exists a unique $s_u\in\mathbb R$ such that $\mathcal H(u,s_u)\in \mathcal P_r(c)$ and $s_u$ is a strict maximum point for the functional $I(\mathcal H(u,s))$ at a positive level. Moreover, $s_u=0$ when $u\in \mathcal P_r(c)$;
\item[(ii)] if $I(u)\leq 0$, then $P(u)<0$.
\end{itemize}
\end{lemma}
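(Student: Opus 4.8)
The cornerstone of the argument is an identity relating the fiber map $s\mapsto I(\mathcal{H}(u,s))$ to the Poho\v{z}aev functional $P$. Set $\psi_u(s):=I(\mathcal{H}(u,s))$. Using the scaling relations recorded in Lemma~\ref{Lem:Delta-mathcalH-property}, one has
$$\psi_u(s)=\frac{e^{4s}}{2}\|\Delta u\|_2^2+\frac{e^{2s}}{2}\|\nabla u\|_2^2-\frac{\mu e^{2p\gamma_p s}}{p}\|u\|_p^p-\frac{e^{2\cdot 4^* s}}{4^*}\|u\|_{4^*}^{4^*},$$
and differentiating term by term I would verify the key identity
$$\psi_u'(s)=2\,P(\mathcal{H}(u,s)),\qquad s\in\mathbb{R}.$$
In particular, $\mathcal{H}(u,s)\in\mathcal{P}_r(c)$ if and only if $s$ is a critical point of $\psi_u$ (note that $\mathcal{H}(u,s)\in S_r(c)$ for every $s$, since the dilation preserves both the $L^2$-norm and radial monotonicity), and $P(u)=\tfrac12\psi_u'(0)$.

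For part~(i), the plan is to show that $\psi_u$ has exactly one critical point, which is a strict global maximum. Substituting $t=e^{2s}>0$, the sign of $\psi_u'(s)$ equals the sign of $Q(t):=at^2+bt-ct^{\theta}-dt^{\kappa}$, where $a=\|\Delta u\|_2^2>0$, $b=\tfrac12\|\nabla u\|_2^2>0$ (here $\nabla u\neq 0$ because $u\in S_r(c)$ is nontrivial), $c=\mu\gamma_p\|u\|_p^p>0$, $d=\|u\|_{4^*}^{4^*}>0$, and where $\theta:=p\gamma_p$ and $\kappa:=4^*$ satisfy $2<\theta<\kappa$ precisely because $\bar{p}<p<4^*$. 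Writing $Q(t)=t\,g(t)$ with $g(t)=at+b-ct^{\theta-1}-dt^{\kappa-1}$, I would observe that $g'(t)=a-c(\theta-1)t^{\theta-2}-d(\kappa-1)t^{\kappa-2}$ is strictly decreasing on $(0,\infty)$ with $g'(0^+)=a>0$ and $g'(t)\to-\infty$; hence $g$ is first increasing then decreasing, and since $g(0^+)=b>0$ and $g(t)\to-\infty$, the function $g$ has a unique zero $t_u$, with $g>0$ on $(0,t_u)$ and $g<0$ on $(t_u,\infty)$. Consequently $\psi_u'$ has the unique zero $s_u=\tfrac12\ln t_u$, is positive on $(-\infty,s_u)$ and negative on $(s_u,\infty)$, so $s_u$ is the unique strict global maximum. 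Because $\psi_u(s)\to 0^+$ as $s\to-\infty$ (Lemma~\ref{Lem:Delta-mathcalH-property}) and $\psi_u$ is strictly increasing on $(-\infty,s_u)$, the maximal value $\psi_u(s_u)$ is positive; and if $u\in\mathcal{P}_r(c)$ then $\psi_u'(0)=2P(u)=0$ forces $s_u=0$ by uniqueness.

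For part~(ii), I would exploit the monotonicity established above. Since $\psi_u$ is strictly increasing on $(-\infty,s_u]$ with $\psi_u(s)\to 0^+$ as $s\to-\infty$, it follows that $\psi_u(s)>0$ for every $s\le s_u$. Therefore $I(u)=\psi_u(0)\le 0$ is incompatible with $0\le s_u$, which forces $0>s_u$; as $\psi_u'<0$ on $(s_u,\infty)$, we conclude $P(u)=\tfrac12\psi_u'(0)<0$.

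The main obstacle is establishing the uniqueness of the critical point of $\psi_u$: with four competing powers of $e^{2s}$ carrying different signs, uniqueness is not automatic, and the decisive device is the factorization $Q(t)=t\,g(t)$ together with the strict monotonicity of $g'$, which crucially relies on the ordering $2<p\gamma_p<4^*$ guaranteed by the assumption $\bar{p}<p<4^*$. Everything else (boundary behavior, positivity of the maximal level, and part~(ii)) then follows from the resulting sign pattern of $\psi_u'$.
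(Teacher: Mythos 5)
Your proof is correct and follows essentially the same route as the paper: both work with the fiber map $\psi_u(s)=I(\mathcal{H}(u,s))$, use the identity $\psi_u'(s)=2P(\mathcal{H}(u,s))$, obtain a positive-level global maximizer from the asymptotics of Lemma \ref{Lem:Delta-mathcalH-property}, and deduce part (ii) from the resulting sign pattern of $\psi_u'$ exactly as you do. The only difference is cosmetic and lies in the uniqueness step: the paper supposes two critical points and, after normalizing $\psi_u'=0$ by $e^{4s}$, subtracts the two resulting identities so that the strictly increasing exponentials ($2p\gamma_p-4>0$, $2\cdot 4^*-4>0$) on one side clash with the strictly decreasing $e^{-2s}$ on the other, while you factor out $t=e^{2s}$ and use strict concavity of $g(t)=at+b-ct^{\theta-1}-dt^{\kappa-1}$ to get unimodality; both mechanisms rest on precisely the same inequality $p\gamma_p>2$ coming from $p>\bar{p}$.
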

\begin{proof}
$(i)$ In view of Lemma \ref{Lem:Delta-mathcalH-property}, $I(\mathcal H(u,s))$ has a global maximum point $s_u\in\mathbb R$ at a positive level.
Thus, we derive that
\begin{equation}\label{eqn:partialI-0-p}
\begin{aligned}
0&=\frac{\partial}{\partial s}I(\mathcal H(u,s_u))=2e^{4s_u}\|\Delta u\|^2_2+e^{2s_u}\|\nabla u\|^2_2-2\mu\gamma_pe^{2p\gamma_p s_u}\|u\|^p_p -2e^{24^*s_u}\|u\|^{4^*}_{4^*}=2P(\mathcal H(u,s_u)).
\end{aligned}
\end{equation}
That is to say, $\mathcal H(u,s_u)\in \mathcal{P}_r(c)$. Hence, it remains to check the uniqueness of $s_u$. Suppose that there exist $\tilde{s}_u\neq s_u$ such that
$\frac{\partial}{\partial s}I(\mathcal H(u,\tilde{s}_u))=0$. Then, a simple calculation yields that
$$
\mu\gamma_p\Big(e^{(2p\gamma_p-4)s_u}-e^{(2p\gamma_p-4)\tilde{s}_u}\Big)\|u\|^p_p+\left(e^{(24^*-4)s_u}
-e^{(24^*-4)\tilde{s}_u}\right)\|u\|^{4^*}_{4^*}=\frac{1}{2}\Big(e^{-2s_u}-e^{-2\tilde{s}_u}\Big)\|\nabla u\|^2_2,
$$
which is impossible under the assumption $\bar{p}< p<4^*$.

$(ii)$ From (\ref{eqn:partialI-0-p}), we see that
$I(\mathcal H(u,s))$ is strictly increasing on $(-\infty,s_u]$ and is strictly decreasing on $[s_u,+\infty)$. Together with Lemma \ref{Prop:I-u-maximum}, there holds that
$$
I(\mathcal H(u,s)) \leq I(\mathcal H(u,0))=I(u) \leq 0 \,\,\, \mbox{for}\,\, s \geq 0,
$$
which indicates that $s_u<0$. Subsequently, on account of
$$
2P\left(\mathcal H(u, s_u)\right)=\frac{\partial}{\partial s}I(\mathcal H(u,s_u))=0 \quad \mbox{and}\quad 2P(u)=2P(\mathcal H(u, 0))=\frac{\partial}{\partial s}I(\mathcal H(u,0)),
$$
we immediately draw the conclusion.
\end{proof}

\begin{lemma}\label{Lem:supAI-infBI}
Let $\bar{p}<p<4^*$ and $c,\mu>0$. Define
\begin{equation}\label{eqn:K-define}
\begin{aligned}
K(c,\mu)&:=\!\min\!\Big\{\frac{8}{4^*}\Big(\frac{4^*S}{16}\Big)^\frac{N}{4}, 4\Big(\frac{S}{4}\Big)^\frac{N}{4},\Big(\frac{p}{8C^p_{N,p}2^\frac{p\gamma_p}{2}\mu c^\frac{p\!-\!p\gamma_p}{2}}\Big)^\frac{2}{p\gamma_p\!-\!2}, \Big(\frac{p}{(p\!-\!2)NC^p_{N,p}\mu c^\frac{p\!-\!p\gamma_p}{2}}\Big)^\frac{2}{p\gamma_p\!-\!2}\Big\},
\end{aligned}
\end{equation}
\begin{equation}\label{eqn:Defn-mathcal-A}
\mathcal A:=\{u\in S_r(c):\|\Delta u\|^2_2+\|\nabla u\|^2_2\leq K(c,\mu)\}
\end{equation}
and
\begin{equation}\label{eqn:Defn-mathcal-B}
\mathcal B:=\{u\in S_r(c):\|\Delta u\|^2_2+\|\nabla u\|^2_2=2K(c,\mu)\}.
\end{equation}
For given $c>0$, we see that:
\begin{itemize}
\item[(i)] $0<\sup\limits_{u\in \mathcal A}I(u)<\inf\limits_{u\in \mathcal B}I(u)$;
\item[(ii)] there holds that $I(u)>0$ for any $u\in S_r(c)$ with $\|\Delta u\|^2_2+\|\nabla u\|^2_2\leq K(c,\mu)$. Moreover,
$$
I_*:=\inf \left\{I(u):u\in S_r(c)\ and \ \|\Delta u\|^2_2+\|\nabla u\|^2_2=\frac{K(c,\mu)}{2}\right\}>0.
$$
\end{itemize}
\end{lemma}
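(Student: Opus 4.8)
The plan is to reduce both assertions to elementary one‑variable estimates in the single quantity $t:=\|\Delta u\|_2^2+\|\nabla u\|_2^2$. Repeating verbatim the chain of inequalities used in the proof of Lemma~\ref{Lem:Ju-bound} (the Gagliardo--Nirenberg inequality (\ref{eqn:GNinequality}) for the $L^p$ term, the Sobolev inequality (\ref{eqn:Defn-S}) for the critical term, and $\|\Delta u\|_2\le\sqrt t$), one obtains for every $u\in S_r(c)$ the lower bound
\begin{equation*}
I(u)\ \ge\ \phi(t):=\frac{1}{2}t-\frac{\mu}{p}C_{N,p}^{p}c^{\frac{p(1-\gamma_p)}{2}}t^{\frac{p\gamma_p}{2}}-\frac{1}{4^{*}S^{\frac{4^{*}}{2}}}t^{\frac{4^{*}}{2}},
\end{equation*}
while simply discarding the two nonnegative nonlinear terms gives the trivial upper bound $I(u)\le\frac12 t$. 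The decisive structural fact is that $\bar p<p<4^*$ forces $p\gamma_p=\frac{N(p-2)}{4}>2$, so both exponents $\frac{p\gamma_p-2}{2}$ and $\frac{4^{*}-2}{2}$ are strictly positive; writing $\phi(t)=t\bigl(\frac12-\frac{\mu}{p}C_{N,p}^{p}c^{\frac{p(1-\gamma_p)}{2}}t^{\frac{p\gamma_p-2}{2}}-\frac{1}{4^{*}S^{\frac{4^{*}}{2}}}t^{\frac{4^{*}-2}{2}}\bigr)$ exhibits the bracketed factor as strictly decreasing in $t$, which is the monotonicity that drives everything.

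To prove (ii), I would check that the constant $K(c,\mu)$ in (\ref{eqn:K-define}) is chosen small enough that the bracketed factor is still positive at $t=K(c,\mu)$; by the decrease just noted it then stays positive on all of $(0,K(c,\mu)]$, so $\phi(t)>0$ there. Since every $u\in S_r(c)$ has $t>0$ (a function with $\|\nabla u\|_2=0$ would be constant, hence $0$ in $L^2$), the bound $I(u)\ge\phi(t)>0$ yields $I(u)>0$ whenever $t\le K(c,\mu)$, the first claim of (ii); evaluating at $t=K(c,\mu)/2$ gives $I_*\ge\phi\bigl(K(c,\mu)/2\bigr)>0$, a fixed positive number, which is the second. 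Here the two thresholds in the minimum (\ref{eqn:K-define}) carrying powers of $S$ control the critical contribution to the bracket, and the two carrying $C_{N,p}^{p}$ control the subcritical one, each being pushed below $\tfrac14$; the fourth threshold, in which $(p-2)N=4p\gamma_p$ appears, encodes the Poho\v{z}aev weight $\mu\gamma_p$ and gives additional room.

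For (i), on $\mathcal A$ the upper bound gives $I(u)\le\frac12 t\le\frac12 K(c,\mu)$, while (ii) together with $\mathcal A\neq\emptyset$ (seen by scaling $\mathcal H(w,s)$ with $s\to-\infty$ so that $t\to0$) forces $0<\sup_{\mathcal A}I\le\frac12 K(c,\mu)$. On $\mathcal B$ one has $t=2K(c,\mu)$, hence $\inf_{\mathcal B}I\ge\phi\bigl(2K(c,\mu)\bigr)$, and the proof is completed by the separation inequality
\begin{equation*}
\phi\bigl(2K(c,\mu)\bigr)>\tfrac{1}{2}K(c,\mu).
\end{equation*}
This last step is where I expect the genuine work to lie: one must dominate both the subcritical term $t^{p\gamma_p/2}$ and the critical term $t^{4^{*}/2}$, now evaluated at the enlarged radius $2K(c,\mu)$, by the linear term, and it is precisely the numerical factors ($16$, $4$, and $2^{p\gamma_p/2}$) built into (\ref{eqn:K-define}) that are calibrated to absorb the extra powers of $2$ and leave a definite gap. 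The whole mechanism collapses without $p\gamma_p>2$: for $p\le\bar p$ the bracketed factor no longer decays in $t$, and neither the positivity nor the separation could be secured for small $t$.
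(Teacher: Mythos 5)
Your proposal is correct and follows essentially the same route as the paper: both proofs reduce everything to one-variable bounds in $t=\|\Delta u\|^2_2+\|\nabla u\|^2_2$ via (\ref{eqn:GNinequality}) and (\ref{eqn:Defn-S}), prove (ii) by checking the bracketed factor is positive at $t=K(c,\mu)$ (the paper's function $f(c,r)$ is exactly your bracket), and prove (i) by the separation $\phi(2K(c,\mu))>\tfrac12 K(c,\mu)$, which is precisely the paper's estimate of $I(v)-I(u)$ for $v\in\mathcal B$, $u\in\mathcal A$. The numerical verifications you deferred do go through: the third threshold in (\ref{eqn:K-define}) bounds the subcritical term at $2K(c,\mu)$ by $\tfrac18$, and the first threshold bounds the critical term (with its factor $2^{4^*/2}$) by $\tfrac18$, leaving the gap $\tfrac12-\tfrac14>0$; the fourth threshold is indeed not needed here (it is used later for the Poho\v{z}aev estimate in Lemma \ref{Lem:gamma-tildegamma-equal}).
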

\begin{proof}
$(i)$ For the case $\bar{p}<p<4^*$, if $u\in \mathcal A$, $v\in \mathcal B$, we derive from (\ref{eqn:GNinequality}), (\ref{eqn:Defn-S}) and (\ref{eqn:K-define}) that
\begin{equation}\label{eqn:Iv-minus-Iu-AB}
\begin{aligned}
I(v)-I(u)&=\frac{1}{2}\|\Delta v\|^2_2\!+\!\frac{1}{2}\|\nabla v\|^2_2\!-\!\frac{\mu}{p}\|v\|^p_p\!-\!\frac{1}{4^*}\|v\|^{4^*}_{4^*}\!-\!\frac{1}{2}\|\Delta u\|^2_2\!-\!\frac{1}{2}\|\nabla u\|^2_2\!+\!\frac{\mu}{p}\|u\|^p_p\!+\!\frac{1}{4^*}\|u\|^{4^*}_{4^*}\\
&\geq \frac{1}{2}\|\Delta v\|^2_2\!+\!\frac{1}{2}\|\nabla v\|^2_2\!-\!\frac{\mu}{p}\|v\|^p_p\!-\!\frac{1}{4^*}\|v\|^{4^*}_{4^*}\!-\!\frac{1}{2}\|\Delta u\|^2_2\!-\!\frac{1}{2}\|\nabla u\|^2_2\\
&\geq\frac{1}{2}K(c,\mu)\!-\frac{\mu}{p}C^p_{N,p}2^\frac{p\gamma_p}{2}c^\frac{p-p\gamma_p}{2}\Big(K(c,\mu)\Big)^\frac{p\gamma_p}{2}\!
-\!\frac{2^\frac{4^*}{2}}{4^* S^\frac{4^*}{2}}\!\Big(K(c,\mu)\Big)^\frac{4^*}{2}\\
&=\frac{1}{4}K(c,\mu)\!-\frac{\mu}{p}C^p_{N,p}2^\frac{p\gamma_p}{2}c^\frac{p-p\gamma_p}{2}\Big(K(c,\mu)\Big)^\frac{p\gamma_p}{2}+\frac{1}{4}K(c,\mu)\!
-\!\frac{2^\frac{4^*}{2}}{4^* S^\frac{4^*}{2}}\!\Big(K(c,\mu)\Big)^\frac{4^*}{2}\\
&> 0.
\end{aligned}
\end{equation}

$(ii)$ From (\ref{eqn:GNinequality})-(\ref{eqn:Defn-S}), we have
$$
\begin{aligned}
I(u)&=\frac{1}{2}\|\Delta u\|^2_2+\frac{1}{2}\|\nabla u\|^2_2-\frac{\mu}{p}\|u\|^p_p-\frac{1}{4^*}\|u\|^{4^*}_{4^*}\\
&\geq\frac{1}{2}\|\Delta u\|^2_2-\frac{\mu}{p}C^p_{N,p}c^\frac{p-p\gamma_p}{2}\Big(\|\Delta u\|^2_2\Big)^\frac{p\gamma_p}{2}-\frac{1}{4^*S^\frac{4^*}{2}}\Big(\|\Delta u\|^2_2\Big)^\frac{4^*}{2}\\
&=\|\Delta u\|^2_2\Big[\frac{1}{2}-\frac{\mu}{p}C^p_{N,p}c^\frac{p-p\gamma_p}{2}\Big(\|\Delta u\|^2_2\Big)^\frac{p\gamma_p-2}{2}-\frac{1}{4^*S^\frac{4^*}{2}}\Big(\|\Delta u\|^2_2\Big)^\frac{4^*-2}{2}\Big]\\
&=\|\Delta u\|^2_2f(c,\|\Delta u\|^2_2),
\end{aligned}
$$
where
$$
f(c,r):=\frac{1}{2}-\frac{\mu}{p}C^p_{N,p}c^\frac{p-p\gamma_p}{2}r^\frac{p\gamma_p-2}{2}-\frac{1}{4^*S^\frac{4^*}{2}}r^\frac{4^*-2}{2},\ \ \ r\geq 0.
$$
For any $u\in S_r(c)$ with $\|\Delta u\|^2_2\leq\|\Delta u\|^2_2+\|\nabla u\|^2_2\leq K(c,\mu)$, observe that
$$
\begin{aligned}
f(c,\|\Delta u\|^2_2)&=\frac{1}{2}-\frac{\mu}{p}C^p_{N,p}c^\frac{p-p\gamma_p}{2}\Big(\|\Delta u\|^2_2\Big)^\frac{p\gamma_p-2}{2}-\frac{1}{4^*S^\frac{4^*}{2}}\Big(\|\Delta u\|^2_2\Big)^\frac{4^*-2}{2}\\
&\geq\frac{1}{2}-\frac{\mu}{p}C^p_{N,p}c^\frac{p-p\gamma_p}{2}\Big(K(c,\mu)\Big)^\frac{p\gamma_p-2}{2}\!
-\frac{1}{4^*S^\frac{4^*}{2}}\Big(K(c,\mu)\Big)^\frac{4^*-2}{2}\\
&\geq\frac{1}{2}\!-\frac{\mu}{p}C^p_{N,p}2^\frac{p\gamma_p}{2}c^\frac{p-p\gamma_p}{2}\Big(K(c,\mu)\Big)^\frac{p\gamma_p-2}{2}\!
-\!\frac{2^\frac{4^*}{2}}{4^* S^\frac{4^*}{2}}\!\Big(K(c,\mu)\Big)^\frac{4^*-2}{2}\\
&>0.
\end{aligned}
$$
As a consequence, $I(u)>0$ for any $ u\in S_r(c)$ with $\|\Delta u\|^2_2+\|\nabla u\|^2_2\leq K(c,\mu)$.

If $\|\Delta u\|^2_2+\|\nabla u\|^2_2=\frac{K(c,\mu)}{2}$, according to the definition of $K(c,\mu)$, we readily infer that
$$
\begin{aligned}
I(u)&\geq\frac{1}{2}\|\Delta u\|^2_2+\frac{1}{2}\|\nabla u\|^2_2-\frac{\mu}{p}C^p_{N,p}c^\frac{p-p\gamma_p}{2}\Big(\|\Delta u\|^2_2\Big)^\frac{p\gamma_p}{2}-\frac{1}{4^*S^\frac{4^*}{2}}\Big(\|\Delta u\|^2_2\Big)^\frac{4^*}{2}\\
&\geq\frac{1}{2}\Big(\|\Delta u\|^2_2+\|\nabla u\|^2_2\Big)-\frac{\mu}{p}C^p_{N,p}c^\frac{p-p\gamma_p}{2}\Big(\|\Delta u\|^2_2+\|\nabla u\|^2_2\Big)^\frac{p\gamma_p}{2}-\frac{1}{4^*S^\frac{4^*}{2}}\Big(\|\Delta u\|^2_2+\|\nabla u\|^2_2\Big)^\frac{4^*}{2}\\
&=\frac{K(c,\mu)}{2}\Big[\frac{1}{2}-\frac{\mu}{p}C^p_{N,p}c^\frac{p-p\gamma_p}{2}\Big(\frac{K(c,\mu)}{2}\Big)^\frac{p\gamma_p-2}{2}
-\frac{1}{4^*S^\frac{4^*}{2}}\Big(\frac{K(c,\mu)}{2}\Big)^\frac{4^*-2}{2}\Big]\\
&> \frac{K(c,\mu)}{2}\Big[\frac{1}{4}-\frac{\mu}{p}C^p_{N,p}c^\frac{p-p\gamma_p}{2}\Big(K(c,\mu)\Big)^\frac{p\gamma_p-2}{2}+\frac{1}{4}
-\frac{1}{4^*S^\frac{4^*}{2}}\Big(K(c,\mu)\Big)^\frac{4^*-2}{2}\Big]\\
&>0,
\end{aligned}
$$
which states that $I_*>0$.
\end{proof}

\begin{remark}\label{rem:MP-level}
{\rm
For any given ${u}\in S_r(c)$, take into account Lemmas \ref{Lem:Delta-mathcalH-property} and \ref{Lem:supAI-infBI},
obviously there are two numbers
\begin{equation}\label{eqn:Defn-s1-s2}
s_1:=s_1(c,\mu,{u})<0 \quad {\rm and} \quad  s_2:=s_2(c,\mu,{u})>0
\end{equation}
to guarantee that the functions
\begin{equation}\label{eqn:Defn-hat-u1-u2}
\hat{u}_{1,\mu}:=\mathcal H({u},s_1) \quad {\rm and} \quad \hat{u}_{2,\mu}:=\mathcal H({u},s_2)
\end{equation}
satisfy
$$
\|\Delta \hat{u}_{1,\mu}\|^2_2+\|\nabla \hat{u}_{1,\mu}\|^2_2<\frac{K(c,\mu)}{2},\ \ \|\Delta \hat{u}_{2,\mu}\|^2_2+\|\nabla \hat{u}_{2,\mu}\|^2_2>2K(c,\mu),\ \ I(\hat{u}_{1,\mu})>0\ \ \mbox{and}\ \ I(\hat{u}_{2,\mu})<0.
$$
Therefore, following the idea from Jeanjean \cite{Jeanjean1997}, we can determine the following mountain pass level denoted by
\begin{equation}\label{eqn:Defn-gamma-mu}
\gamma_\mu(c):=\inf\limits_{h\in\Gamma(c)}\max\limits_{t\in[0,1]}I(h(t))
\end{equation}
with
\begin{equation}\label{eqn:Defn-Gamma-c}
\Gamma(c):=\Big\{h\in C([0,1],S_r(c)):\|\Delta h(0)\|^2_2+\|\nabla h(0)\|^2_2<\frac{K(c,\mu)}{2}\ \mbox{and}\ I(h(1))<0\Big\}.
\end{equation}
To guarantee the existence of \emph{(PS)} sequence for $I$ at the level $\gamma_\mu(c)$ defined in (\ref{eqn:Defn-gamma-mu}), we need the help of an auxiliary functional $\tilde{I}:E=H^2(\mathbb{R}^N\times \mathbb{R})\to \mathbb R$ given below
\begin{equation}\label{eqn:Defn-tilde-I}
\tilde{I}(u,s):=\frac{e^{4s}}{2}\|\Delta u\|^2_2+\frac{e^{2s}}{2}\|\nabla u\|^2_2-\frac{\mu e^{2p\gamma_p s}}{p}\|u\|^p_p-\frac{e^{24^* s}}{4^*}\|u\|^{4^*}_{4^*}
=I(\mathcal H(u,s)).
\end{equation}
As the functional $\tilde{I}$ is concerned, it is necessary to consider the minimax level
\begin{equation}\label{eqn:Defn-widetilde-gamma}
\widetilde{\gamma}_\mu(c):=\inf\limits_{\tilde{h}\in\widetilde{\Gamma}(c)}\max\limits_{t\in[0,1]}\tilde{I}(\tilde{h}(t)),
\end{equation}
where the path family $\tilde{\Gamma}(c)$ is supposed to satisfy the following requirement
\begin{equation}\label{eqn:Defn-widetilde-Gamma}
\begin{aligned}
\widetilde{\Gamma}(c):=\Big\{\tilde{h}=&(\tilde{h}_1,\tilde{h}_2)\in C([0,1],S_r(c)\times \mathbb{R}):\tilde{h}(0)=(\tilde{h}_{1}(0),0),\ \ \tilde{h}(1)=(\tilde{h}_{1}(1),0),\\
&\ \ \ \ \ \ \ \ \ \ \ \ \ \ \ \ \ \ \ \ \ \ \ \ \ \ \|\Delta \tilde{h}_{1}(0)\|^2_2+\|\nabla \tilde{h}_{1}(0)\|^2_2<\frac{K(c,\mu)}{2} \ \ \mbox{and}\ \ \tilde{I}(\tilde{h}_{1}(1),0)<0\Big\}.
\end{aligned}
\end{equation}
}
\end{remark}

\begin{lemma}\label{Lem:gamma-tildegamma-equal}
Let $\bar{p}< p<4^*$ and $c,\mu>0$. Then, we have
\begin{equation}\label{eqn:gamma-mu-widegamma-mu}
\widetilde{\gamma}_\mu(c)=\gamma_\mu(c)=m_r(c):=\inf\limits_{u\in \mathcal P_r(c)}I(u) >0.
\end{equation}
\end{lemma}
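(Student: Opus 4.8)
The plan is to establish the chain in (\ref{eqn:gamma-mu-widegamma-mu}) in three stages: first the equality $\widetilde{\gamma}_\mu(c)=\gamma_\mu(c)$, then $\gamma_\mu(c)=m_r(c)$, and finally the strict positivity $m_r(c)>0$. The whole argument rests on the identity $\tilde I(u,s)=I(\mathcal H(u,s))$ from (\ref{eqn:Defn-tilde-I}) together with Lemmas \ref{Lem:Delta-mathcalH-property} and \ref{Prop:I-u-maximum}. For $\widetilde{\gamma}_\mu(c)=\gamma_\mu(c)$ I would set up a correspondence between the two path families. Given $h\in\Gamma(c)$, the lifted path $t\mapsto(h(t),0)$ lies in $\widetilde{\Gamma}(c)$ and satisfies $\tilde I(h(t),0)=I(h(t))$, which yields $\widetilde{\gamma}_\mu(c)\le\gamma_\mu(c)$. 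Conversely, for $\tilde h=(\tilde h_1,\tilde h_2)\in\widetilde{\Gamma}(c)$ the path $t\mapsto\mathcal H(\tilde h_1(t),\tilde h_2(t))$ is continuous into $S_r(c)$ by continuity of $\mathcal H$; since $\tilde h_2(0)=\tilde h_2(1)=0$ forces $\mathcal H(\tilde h_1(j),0)=\tilde h_1(j)$ for $j=0,1$, it has the endpoint properties required by (\ref{eqn:Defn-Gamma-c}), and $I(\mathcal H(\tilde h_1(t),\tilde h_2(t)))=\tilde I(\tilde h(t))$ gives $\gamma_\mu(c)\le\widetilde{\gamma}_\mu(c)$.

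For $\gamma_\mu(c)=m_r(c)$ I would argue by two inequalities. To get $\gamma_\mu(c)\le m_r(c)$, fix $u\in\mathcal P_r(c)$; by Lemma \ref{Lem:Delta-mathcalH-property} I can choose $s_1<0$ with $\|\Delta\mathcal H(u,s_1)\|_2^2+\|\nabla\mathcal H(u,s_1)\|_2^2<K(c,\mu)/2$ and $s_2>0$ with $I(\mathcal H(u,s_2))<0$, so the reparametrised curve $s\mapsto\mathcal H(u,s)$ on $[s_1,s_2]$ is admissible in $\Gamma(c)$. By Lemma \ref{Prop:I-u-maximum}(i), for $u\in\mathcal P_r(c)$ the maximum of $s\mapsto I(\mathcal H(u,s))$ is attained (strictly) at $s_u=0\in(s_1,s_2)$, so the maximal value of $I$ along this path equals $I(u)$; taking the infimum over $\mathcal P_r(c)$ gives the bound. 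To get $\gamma_\mu(c)\ge m_r(c)$, I would show every $h\in\Gamma(c)$ meets $\mathcal P_r(c)$: since $\|\Delta h(0)\|_2^2+\|\nabla h(0)\|_2^2<K(c,\mu)/2$, the constants in (\ref{eqn:K-define}) force the quadratic part of $P(h(0))$ to dominate the Gagliardo--Nirenberg remainders, whence $P(h(0))>0$, while $I(h(1))<0$ gives $P(h(1))<0$ by Lemma \ref{Prop:I-u-maximum}(ii). Continuity of $t\mapsto P(h(t))$ then produces $t_0$ with $h(t_0)\in\mathcal P_r(c)$, so $\max_{t}I(h(t))\ge I(h(t_0))\ge m_r(c)$.

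For $m_r(c)>0$ I would first obtain a uniform lower bound on the norm over $\mathcal P_r(c)$: writing $r^2=\|\Delta u\|_2^2+\|\nabla u\|_2^2$ and using $P(u)=0$ with (\ref{eqn:GNinequality}) and (\ref{eqn:Defn-S}) gives $\tfrac12 r^2\le \mu\gamma_p C^p_{N,p}c^{(p-p\gamma_p)/2}r^{p\gamma_p}+S^{-4^{*}/2}r^{4^*}$, and since $p\gamma_p>2$ and $4^*>2$ in the regime $\bar p<p<4^*$ this forces $r^2\ge\delta$ for some $\delta>0$ independent of $u$. Eliminating the $L^p$-term through $I(u)=I(u)-\tfrac{1}{p\gamma_p}P(u)$ then yields $I(u)=\big(\tfrac12-\tfrac{1}{p\gamma_p}\big)\|\Delta u\|_2^2+\big(\tfrac12-\tfrac{1}{2p\gamma_p}\big)\|\nabla u\|_2^2+\big(\tfrac{1}{p\gamma_p}-\tfrac{1}{4^*}\big)\|u\|_{4^*}^{4^*}$, whose three coefficients are all positive because $2<p\gamma_p<4^*$ (the relation $4^*\gamma_{4^*}=4^*$ gives $p\gamma_p<4^*$ for $p<4^*$); hence $I(u)\ge(\tfrac12-\tfrac{1}{p\gamma_p})\delta>0$ uniformly, giving $m_r(c)>0$.

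I expect this last stage to be the main obstacle. The difficulty is twofold: one must extract the uniform positive lower bound $\delta$ on $\mathcal P_r(c)$ (not merely pointwise positivity of $I$), and one must choose the correct Pohozaev multiplier $1/(p\gamma_p)$ so that, after cancelling the $L^p$-term, the dispersion contribution $\|\nabla u\|_2^2$ enters with the favourable coefficient $\tfrac12-\tfrac{1}{2p\gamma_p}>0$. The other two equalities are essentially formal consequences of the conjugacy $\tilde I=I\circ\mathcal H$ and the geometry already recorded in Lemmas \ref{Lem:Delta-mathcalH-property}--\ref{Lem:supAI-infBI}, with the only care needed being the verification that the constructed paths remain admissible.
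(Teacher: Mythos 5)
Your proposal is correct and follows essentially the same three-stage argument as the paper: the identical path correspondence for $\widetilde{\gamma}_\mu(c)=\gamma_\mu(c)$, the identical use of Lemma \ref{Prop:I-u-maximum} plus an intermediate-value argument on $t\mapsto P(h(t))$ (with $P>0$ on the small-norm set coming from the definition of $K(c,\mu)$) for $\gamma_\mu(c)=m_r(c)$, and the identical uniform bound $\|\Delta u\|_2^2+\|\nabla u\|_2^2\geq\xi>0$ on $\mathcal P_r(c)$ for the strict positivity. The only cosmetic difference is in the last step, where you subtract $\frac{1}{p\gamma_p}P(u)$ to cancel the $L^p$-term exactly, while the paper subtracts $\frac{1}{4^*}P(u)$ and then bounds $\mu\gamma_p\|u\|_p^p$ via the Poho\v{z}aev identity; both routes produce the same final estimate $I(u)\geq\bigl(\tfrac{1}{2}-\tfrac{1}{p\gamma_p}\bigr)\bigl(\|\Delta u\|_2^2+\|\nabla u\|_2^2\bigr)\geq\bigl(\tfrac{1}{2}-\tfrac{1}{p\gamma_p}\bigr)\xi>0$.
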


\begin{proof}
Firstly, we prove $\widetilde{\gamma}_\mu(c)=\gamma_\mu(c)$. For any $\tilde{h}(t)\in\widetilde{\Gamma}(c)$, it can be rewritten as $\tilde{h}(t)=(\tilde{h}_1(t),\tilde{h}_2(t))\in S_r(c)\times \mathbb R$. Set
$h(t)=\mathcal H (\tilde{h}_1(t),\tilde{h}_2(t))$, then $h(t)\in \Gamma(c)$ and
$$
\max\limits_{t\in[0,1]}\tilde{I}(\tilde{h}(t))=\max\limits_{t\in[0,1]}I(h(t))\geq \gamma_\mu(c),
$$
which implies that $\widetilde{\gamma}_\mu(c)\geq\gamma_\mu(c)$. On the other hand, for any $h\in\Gamma(c)$, denote $\tilde{h}(t)=(h(t),0)$, then $\tilde{h}(t)\in\widetilde{\Gamma}(c)$ and
$$
\widetilde{\gamma}_\mu(c)\leq\max\limits_{t\in[0,1]}\tilde{I}(\tilde{h}(t))
=\max\limits_{t\in[0,1]}I(h(t)),
$$
it means that $\gamma_\mu(c)\geq\widetilde{\gamma}_\mu(c)$. Therefore, we conclude that $\widetilde{\gamma}_\mu(c)=\gamma_\mu(c)$.

Secondly, we state that $\gamma_\mu(c)=m_r(c)$. For any $u\in\mathcal P_r(c)$, Lemmas \ref{Lem:Delta-mathcalH-property} and \ref{Lem:supAI-infBI} imply the existence of two constants $s_1=s_1(c,\mu,u)<0$ and
$s_2=s_2(c,\mu,u)>0$ to guarantee that $h(t):=\mathcal H(u,(1-t)s_1+ts_2)$, $t\in[0,1]$, belongs to $\Gamma(c)$,
Therefore, it follows from Lemma \ref{Prop:I-u-maximum} that
$$
\gamma_\mu(c)\leq\max\limits_{t\in[0,1]}I(h(t))=\max\limits_{t\in[0,1]}I(\mathcal H(u,(1-t)s_1+ts_2))\leq I(u),
$$
namely, $\gamma_\mu(c)\leq m_r(c)$.

On the other hand, for any $\tilde{h}(t):=\Big(\tilde{h}_1(t), \tilde{h}_2(t)\Big) \in \widetilde{\Gamma}(c)$, we consider the function
$$
\widetilde{P}(t):=P\Big(\mathcal H(\tilde{h}_1(t), \tilde{h}_2(t))\Big).
$$
From (\ref{eqn:Pohozaev-identify}), (\ref{eqn:GNinequality}) and (\ref{eqn:Defn-S}), we have
\begin{equation}\label{eqn:P-leq-g}
\begin{aligned}
P(u)&=\|\Delta u\|^2_2+\frac{1}{2}\|\nabla u\|^2_2-\mu\gamma_p\|u\|^p_p-\|u\|^{4^*}_{4^*}\\
&\geq\|\Delta u\|^2_2-\mu\gamma_p C_{N,p}^p c^\frac{p-p\gamma_p}{2}(\|\Delta u\|^2_2)^\frac{p\gamma_p}{2}-\frac{1}{S^\frac{4^*}{2}}(\|\Delta u\|^2_2)^\frac{4^*}{2}\\
&=\|\Delta u\|^2_2\Big(1-\mu\gamma_p C_{N,p}^p c^\frac{p-p\gamma_p}{2}(\|\Delta u\|^2_2)^\frac{p\gamma_p-2}{2}-\frac{1}{S^\frac{4^*}{2}}(\|\Delta u\|^2_2)^\frac{4^*-2}{2}\Big)\\
&=\|\Delta u\|^2_2 g(c,\|\Delta u\|^2_2),
\end{aligned}
\end{equation}
where
$$
g(c,r):=1-\mu\gamma_p C_{N,p}^p c^\frac{p-p\gamma_p}{2}r^\frac{p\gamma_p-2}{2}-\frac{1}{S^\frac{4^*}{2}}r^\frac{4^*-2}{2},\ \ \ r\geq 0.
$$
For any $u\in \mathcal A$ defined in (\ref{eqn:Defn-mathcal-A}), from the definition of $K(c,\mu)>0$ in (\ref{eqn:K-define}), we see that
$$
\begin{aligned}
g(c,\|\Delta u\|^2_2)&=1-\mu\gamma_p C_{N,p}^p c^\frac{p-p\gamma_p}{2}(\|\Delta u\|^2_2)^\frac{p\gamma_p-2}{2}-\frac{1}{S^\frac{4^*}{2}}(\|\Delta u\|^2_2)^\frac{4^*-2}{2}\\
&\geq 1-\mu\gamma_p C_{N,p}^p c^\frac{p-p\gamma_p}{2}\Big(\|\Delta u\|^2_2+\|\nabla u\|^2_2\Big)^\frac{p\gamma_p-2}{2}-\frac{1}{S^\frac{4^*}{2}}\Big(\|\Delta u\|^2_2+\|\nabla u\|^2_2\Big)^\frac{4^*-2}{2}\\
&\geq1-\mu\gamma_p C_{N,p}^p c^\frac{p-p\gamma_p}{2}(K(c,\mu))^\frac{p\gamma_p-2}{2}-\frac{1}{S^\frac{4^*}{2}}\Big(K(c,\mu)\Big)^\frac{4^*-2}{2}\\
&=\frac{1}{2}+\frac{1}{4}-\mu\gamma_p C_{N,p}^p c^\frac{p-p\gamma_p}{2}(K(c,\mu))^\frac{p\gamma_p-2}{2}+\frac{1}{4}-\frac{1}{S^\frac{4^*}{2}}\Big(K(c,\mu)\Big)^\frac{4^*-2}{2}\\
&>0,
\end{aligned}
$$
which brings that
\begin{equation}\label{eqn:P-geq-0}
P(u)>0, \ \ \  u\in \mathcal A.
\end{equation}
Thereby, utilizing (\ref{eqn:P-geq-0}) and Lemma \ref{Prop:I-u-maximum} $(ii)$, we infer that
$$
\widetilde{P}(0)=P\left(\mathcal H(\tilde{h}_1(0), \tilde{h}_2(0))\right)=P\left(\tilde{h}_1(0)\right)>0
$$
and
$$
\widetilde{P}(1)=P\left(\mathcal H(\tilde{h}_1(1), \tilde{h}_2(1))\right)=P\left(\tilde{h}_1(1)\right)<0.
$$

Take advantage of the continuity of $\widetilde{P}_r(t)$, there exists some $\tilde{t} \in(0,1)$ such that
$\widetilde{P}(\tilde{t})=0$, which means that $\mathcal H(\tilde{h}_1(\tilde{t}), \tilde{h}_2(\tilde{t})) \in \mathcal P_r(c)$. Consequently, one has
$$
\max_{t \in[0,1]} \tilde{I}(\tilde{h}(t))=\max_{t \in[0,1]} I\left(\mathcal H(\tilde{h}_1(t), \tilde{h}_2(t))\right) \geq \inf_{u \in \mathcal P_r(c)}I(u).
$$
Due to the arbitrariness of $\tilde{h}(t)$, it signifies that $\widetilde{\gamma}_\mu(c)=\gamma_\mu(c) \geq m_r(c)$. Hence, $\gamma_\mu(c) = m_r(c)$.

Finally, we claim that $m_r(c)>0$. For any $u\in \mathcal P_r(c)$, by using (\ref{eqn:GNinequality}) and (\ref{eqn:Defn-S}), we get
$$
\begin{aligned}
\frac{1}{2}\Big(\|\Delta u\|^2_2+\|\nabla u\|^2_2\Big)&\leq \|\Delta u\|^2_2+\frac{1}{2}\|\nabla u\|^2_2\\
&=\mu\gamma_p\|u\|^p_p+\|u\|^{4^*}_{4^*}\\
&\leq \mu\gamma_p C_{N,p}^p c^\frac{p-p\gamma_p}{2}(\|\Delta u\|^2_2)^\frac{p\gamma_p}{2}+\frac{1}{S^\frac{4^*}{2}}(\|\Delta u\|^2_2)^\frac{4^*}{2}\\
&\leq \mu\gamma_p C_{N,p}^p c^\frac{p-p\gamma_p}{2}\Big(\|\Delta u\|^2_2+\|\nabla u\|^2_2\Big)^\frac{p\gamma_p}{2}+\frac{1}{S^\frac{4^*}{2}}\Big(\|\Delta u\|^2_2+\|\nabla u\|^2_2\Big)^\frac{4^*}{2}.
\end{aligned}
$$
Together this with the fact $\frac{4^*}{2}>1$ and $\frac{p\gamma_p}{2}>1$, we know that there exists one $\xi>0$ such that
$$
\inf_{u\in \mathcal P_r(c)} \Big(\|\Delta u\|^2_2+\|\nabla u\|^2_2\Big) \geq \xi.
$$
Therefore, for any $u\in \mathcal P_r(c)$, there hold
\begin{equation}\label{eqn:Iu-geq-strict-xi-0}
\begin{aligned}
I(u)&=I(u)-\frac{1}{4^*}P(u)\\
&=\frac{1}{2}\|\Delta u\|^2_2+\frac{1}{2}\|\nabla u\|^2_2-\frac{\mu}{p}\|u\|^p_p-\frac{1}{4^*}\|u\|^{4^*}_{4^*}
-\frac{1}{4^*}\|\Delta u\|^2_2-\frac{1}{24^*}\|\nabla u\|^2_2+\frac{\mu\gamma_p}{4^*}\|u\|^p_p+\frac{1}{4^*}\|u\|^{4^*}_{4^*}\\
&=\Big(\frac{1}{2}-\frac{1}{4^*}\Big)\|\Delta u\|^2_2+\Big(\frac{1}{2}-\frac{1}{24^*}\Big)\|\nabla u\|^2_2+\mu\Big(\frac{\gamma_p}{4^*}-\frac{1}{p}\Big)\|u\|^p_p\\
&\geq\Big(\frac{1}{2}-\frac{1}{4^*}\Big)\|\Delta u\|^2_2+\Big(\frac{1}{2}-\frac{1}{24^*}\Big)\|\nabla u\|^2_2+\mu\Big(\frac{\gamma_p}{4^*}-\frac{1}{p}\Big)\frac{1}{\mu\gamma_p}\Big(\|\Delta u\|^2_2+\frac{1}{2}\|\nabla u\|^2_2\Big)\\
&=\Big(\frac{1}{2}-\frac{1}{p\gamma_p}\Big)\|\Delta u\|^2_2+\Big(\frac{1}{2}-\frac{1}{2p\gamma_p}\Big)\|\nabla u\|^2_2\\
&\geq\Big(\frac{1}{2}-\frac{1}{p\gamma_p}\Big)\Big(\|\Delta u\|^2_2+\|\nabla u\|^2_2\Big)\geq\Big(\frac{1}{2}-\frac{1}{p\gamma_p}\Big)\xi>0,
\end{aligned}
\end{equation}
since $p\gamma_p-4^*=\frac{N[(N-4)p-2N]}{4(N-4)}<0$ and
$$
\mu\gamma_p\|u\|^p_p=\|\Delta u\|^2_2+\frac{1}{2}\|\nabla u\|^2_2-\|u\|^{4^*}_{4^*}\leq \|\Delta u\|^2_2+\frac{1}{2}\|\nabla u\|^2_2.
$$
\end{proof}

Based on Lemma \ref{Lem:supAI-infBI}, Lemma \ref{Lem:gamma-tildegamma-equal} and the definition of $\tilde{I}$, it indicates that
\begin{equation}\label{eqn:gamma-mu-c-big-gamma-0-mu-c}
\begin{aligned}
\widetilde{\gamma}_\mu(c)&=\gamma_\mu(c)>\sup_{\tilde{h}\in \widetilde{\Gamma}(c)}\{I(h_1(0)),I(h_1(1))\}=\sup_{\tilde{h}\in \widetilde{\Gamma}(c)}\{\tilde{I}(\tilde{h}(0)),\tilde{I}(\tilde{h}(1))\}=:(\widetilde{\gamma}_\mu)_0(c).
\end{aligned}
\end{equation}

\begin{lemma}\label{Lem:three-tildeJ-prove}
Let $\bar{p}< p<4^*$ and $c,\mu>0$, for $0<\varepsilon<\widetilde{\gamma}_\mu(c)-(\widetilde{\gamma}_\mu)_0(c)$, choose $g\in \widetilde{\Gamma}(c)$ such that $\max\limits_{t\in[0,1]}\tilde{I}(g(t)) \leq \widetilde{\gamma}_\mu(c)+\varepsilon$. Then there exists $(v,s)\in S_r(c)\times \mathbb R$ to guarantee that
\begin{itemize}
\item[(i)]  $\tilde{I}(v,s)\in\left[\widetilde{\gamma}_\mu(c)-\varepsilon,\widetilde{\gamma}_\mu(c)+\varepsilon\right]$;
\item[(ii)] $\min\limits_{t\in[0,1]}\|(v,s)-g(t)\|_E\leq \sqrt{\varepsilon}$;
\item[(iii)]$\|(\tilde{I}|_{S_r(c)\times\mathbb R})'(v,s)\|_{E'}\leq 2\sqrt{\varepsilon}$,
\end{itemize}
where $\widetilde{\gamma}_\mu(c)$, $\widetilde{\Gamma}(c)$ and $(\widetilde{\gamma}_\mu)_0(c)$are given in (\ref{eqn:Defn-widetilde-gamma}), (\ref{eqn:Defn-widetilde-Gamma}) and (\ref{eqn:gamma-mu-c-big-gamma-0-mu-c}), respectively.
\end{lemma}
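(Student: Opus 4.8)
The plan is to derive the three assertions from Ekeland's variational principle applied to the space of admissible paths, followed by a localized deformation argument supplying the small constrained gradient; this is exactly the quantitative minimax scheme of Jeanjean \cite{Jeanjean1997}, here carried out for the augmented functional $\tilde I$ on the constraint manifold $S_r(c)\times\mathbb R$. First I would equip the path class with the uniform metric $d(\tilde h,\tilde h'):=\max_{t\in[0,1]}\|\tilde h(t)-\tilde h'(t)\|_E$ and introduce $\Psi(\tilde h):=\max_{t\in[0,1]}\tilde I(\tilde h(t))$, which is continuous in $d$ and, by (\ref{eqn:Defn-widetilde-gamma}), satisfies $\inf_{\widetilde\Gamma(c)}\Psi=\widetilde\gamma_\mu(c)$. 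Since the two endpoint conditions in (\ref{eqn:Defn-widetilde-Gamma}) are strict (hence open) and may degenerate under uniform limits, I would work on the complete space obtained by relaxing them to the non-strict inequalities $\|\Delta\tilde h_1(0)\|^2_2+\|\nabla\tilde h_1(0)\|^2_2\le\frac{K(c,\mu)}{2}$ and $\tilde I(\tilde h_1(1),0)\le0$; the strict gap (\ref{eqn:gamma-mu-c-big-gamma-0-mu-c}), namely $\widetilde\gamma_\mu(c)>(\widetilde\gamma_\mu)_0(c)$, guarantees that this relaxation leaves $\widetilde\gamma_\mu(c)$ unchanged and that any path with $\Psi$ close to the level keeps its endpoint values $\le(\widetilde\gamma_\mu)_0(c)$ strictly below $\widetilde\gamma_\mu(c)$, so the Ekeland near-minimizer will automatically satisfy the original strict conditions.

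Because the chosen $g$ obeys $\Psi(g)\le\widetilde\gamma_\mu(c)+\varepsilon=\inf\Psi+\varepsilon$, Ekeland's principle with parameter $\sqrt\varepsilon$ produces a path $h_0\in\widetilde\Gamma(c)$ with $\Psi(h_0)\le\Psi(g)$, $d(h_0,g)\le\sqrt\varepsilon$, and $\Psi(\tilde h)\ge\Psi(h_0)-\sqrt\varepsilon\,d(\tilde h,h_0)$ for all admissible $\tilde h$ (here $\varepsilon/\sqrt\varepsilon=\sqrt\varepsilon$). Setting $\mathcal M_0:=\{t\in[0,1]:\tilde I(h_0(t))=\Psi(h_0)\}$, the gap (\ref{eqn:gamma-mu-c-big-gamma-0-mu-c}) again forces $\mathcal M_0\subset(0,1)$, since $\Psi(h_0)\ge\widetilde\gamma_\mu(c)>(\widetilde\gamma_\mu)_0(c)$ while the endpoint values of $h_0$ are $\le(\widetilde\gamma_\mu)_0(c)$; thus every maximal point lies in the interior, away from the pinned endpoints. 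For any $t_0\in\mathcal M_0$ the point $(v,s):=h_0(t_0)$ already yields (i), because $\widetilde\gamma_\mu(c)\le\Psi(h_0)=\tilde I(v,s)\le\Psi(g)\le\widetilde\gamma_\mu(c)+\varepsilon$, and yields (ii), because $\min_{t}\|(v,s)-g(t)\|_E\le\|h_0(t_0)-g(t_0)\|_E\le d(h_0,g)\le\sqrt\varepsilon$.

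It remains to select $t_0\in\mathcal M_0$ realizing (iii), and this I would prove by contradiction. Suppose $\|(\tilde I|_{S_r(c)\times\mathbb R})'(h_0(t))\|_{E'}>2\sqrt\varepsilon$ for every $t\in\mathcal M_0$. Then a pseudo-gradient vector field for $\tilde I|_{S_r(c)\times\mathbb R}$ exists in a neighborhood of the compact set $h_0(\mathcal M_0)$; integrating it gives a deformation $\eta$ that stays on $S_r(c)\times\mathbb R$, strictly decreases $\tilde I$ there, is the identity outside a small tube, and—because $\mathcal M_0$ is an interior compact set separated from $\{0,1\}$—leaves the endpoints of $h_0$ fixed. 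The deformed path then remains in $\widetilde\Gamma(c)$, satisfies $\Psi(\eta\circ h_0)<\Psi(h_0)$ with the descent quantified by the lower bound $2\sqrt\varepsilon$ on the gradient, yet stays within distance of order $\sqrt\varepsilon$ of $h_0$; calibrating the flow time so the strict drop exceeds $\sqrt\varepsilon\,d(\eta\circ h_0,h_0)$ contradicts the Ekeland inequality. The main obstacle is precisely this construction: one must build a deformation that simultaneously respects the nonlinear constraint $S_r(c)\times\mathbb R$—projecting the flow onto the tangent bundle $T_vS_r(c)\times\mathbb R$, using that $S_r(c)$ is a smooth submanifold and choosing the pseudo-gradient to preserve radial symmetry—and is localized in the parameter $t$ so the endpoints are untouched, which is exactly what the strict inequality (\ref{eqn:gamma-mu-c-big-gamma-0-mu-c}) makes possible. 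Everything else is a routine consequence of Ekeland's principle and the quantitative deformation lemma.
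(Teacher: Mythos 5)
The paper offers no proof of this lemma at all: it defers to \cite[Lemma 2.5]{Liu-Zhang2023}, whose argument---going back to the scheme of Jeanjean \cite{Jeanjean1997} and, ultimately, Ghoussoub's minimax principle---is exactly the Ekeland-plus-localized-deformation construction you outline, so your proposal is correct and follows essentially the same route as the proof the paper points to. One imprecision worth fixing: the gap \eqref{eqn:gamma-mu-c-big-gamma-0-mu-c} by itself does not show that passing to the relaxed (closed, hence complete) path class leaves the minimax value unchanged, since enlarging the admissible class can only lower the infimum; the correct justification is that any relaxed path still satisfies $P(\tilde{h}_1(0))>0$ by \eqref{eqn:P-geq-0} and $P(\tilde{h}_1(1))<0$ by Lemma \ref{Prop:I-u-maximum}(ii), hence still crosses $\mathcal P_r(c)$, so its maximum is at least $m_r(c)=\widetilde{\gamma}_\mu(c)$---the same crossing argument already used in the proof of Lemma \ref{Lem:gamma-tildegamma-equal}.
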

\begin{proof} Its proof is almost the repetition of the process of \cite[Lemma 2.5]{Liu-Zhang2023}, so we omit the details.

\end{proof}

The following proposition can be obtained directly from the above lemma.

\begin{proposition}\label{Pro:three-tildeJ}
Let $\bar{p}< p<4^*$ and $c,\mu>0$. Choose $\{g_n\}\subset\widetilde{\Gamma}(c)$ such that $\max\limits_{t\in[0,1]}\tilde{I}(g_n(t))\leq \widetilde{\gamma}_\mu(c)+\frac{1}{n}$, then there exists a sequence
$\{(v_n,s_n)\}\subset S_r(c)\times \mathbb{R}$ such that
\begin{itemize}
\item[(i)]  $\tilde{I}(v_n,s_n)\in\left[\widetilde{\gamma}_\mu(c)-\frac{1}{n},\widetilde{\gamma}_\mu(c)+\frac{1}{n}\right]$;
\item[(ii)] $\min\limits_{t\in[0,1]}\|(v_n,s_n)-g_n(t)\|_E\leq \frac{1}{\sqrt{n}}$;
\item[(iii)]$\|(\tilde{I}|_{S_r(c)\times\mathbb R})'(v_n,s_n)\|_{E'}\leq \frac{2}{\sqrt{n}}$, that is,
$$
|\langle \tilde{I}'(v_n,s_n),\omega \rangle_{E'\times E}|
\leq \frac{2}{\sqrt{n}}\|\omega\|_E
$$ for all $\omega \in \widetilde{T}_{(v_n,s_n)}:=\{(\omega_1,\omega_2)\in E,\langle v_n,\omega_1 \rangle_{L^2}=0\}$.
\end{itemize}
\end{proposition}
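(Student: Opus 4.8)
The plan is to obtain this proposition as a direct consequence of Lemma \ref{Lem:three-tildeJ-prove}, by letting the parameter $\varepsilon$ appearing there run through the null sequence $1/n$. First I would record that the strict inequality (\ref{eqn:gamma-mu-c-big-gamma-0-mu-c}) guarantees a positive gap
$$
\delta:=\widetilde{\gamma}_\mu(c)-(\widetilde{\gamma}_\mu)_0(c)>0.
$$
Consequently, for every integer $n>1/\delta$ one has $0<\frac{1}{n}<\delta$, so that the choice $\varepsilon=\frac{1}{n}$ falls inside the admissible range $0<\varepsilon<\widetilde{\gamma}_\mu(c)-(\widetilde{\gamma}_\mu)_0(c)$ required by the hypothesis of Lemma \ref{Lem:three-tildeJ-prove}.

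Next, I would fix such an $n$ and apply Lemma \ref{Lem:three-tildeJ-prove} with this $\varepsilon=\frac{1}{n}$ and with the map $g=g_n\in\widetilde{\Gamma}(c)$ furnished in the statement. The defining property $\max_{t\in[0,1]}\tilde{I}(g_n(t))\leq\widetilde{\gamma}_\mu(c)+\frac{1}{n}=\widetilde{\gamma}_\mu(c)+\varepsilon$ is exactly the entry condition of the lemma, so it yields a pair $(v_n,s_n)\in S_r(c)\times\mathbb{R}$ satisfying its three conclusions. Substituting $\varepsilon=\frac{1}{n}$, so that $\sqrt{\varepsilon}=\frac{1}{\sqrt{n}}$ and $2\sqrt{\varepsilon}=\frac{2}{\sqrt{n}}$, turns parts (i)--(iii) of Lemma \ref{Lem:three-tildeJ-prove} verbatim into parts (i)--(iii) of the proposition. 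For the finitely many indices $n\leq 1/\delta$ I would simply set $(v_n,s_n):=(g_n(1/2),0)$ or any fixed choice, since only the tail of the sequence enters the subsequent $(PS)$-type analysis.

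Finally, to match the concrete form of item (iii) I would unfold the definition of the constrained-derivative norm $\|(\tilde{I}|_{S_r(c)\times\mathbb{R}})'(v_n,s_n)\|_{E'}$ as the supremum of $|\langle\tilde{I}'(v_n,s_n),\omega\rangle_{E'\times E}|$ over vectors $\omega$ of norm at most one lying in the tangent space $\widetilde{T}_{(v_n,s_n)}=\{(\omega_1,\omega_2)\in E:\langle v_n,\omega_1\rangle_{L^2}=0\}$; the estimate $\leq\frac{2}{\sqrt{n}}$ then reads precisely as the displayed inequality. Since the whole argument amounts to inserting a specific null sequence of $\varepsilon$'s into the already-established Lemma \ref{Lem:three-tildeJ-prove}, I do not anticipate a genuine obstacle; the single point deserving a line of care is confirming that $\varepsilon=\frac{1}{n}$ respects the constraint $0<\varepsilon<\delta$ of that lemma, which is exactly why the positivity of the gap $\delta$ coming from (\ref{eqn:gamma-mu-c-big-gamma-0-mu-c}) must be invoked at the outset.
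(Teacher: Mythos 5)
Your proposal is correct and takes essentially the same route as the paper: the paper's entire proof is the remark that the proposition ``can be obtained directly from the above lemma,'' i.e.\ from Lemma \ref{Lem:three-tildeJ-prove} with $\varepsilon=\frac{1}{n}$, which is exactly your argument. You are in fact slightly more careful than the paper, since you explicitly check that $\varepsilon=\frac{1}{n}$ satisfies the hypothesis $0<\varepsilon<\widetilde{\gamma}_\mu(c)-(\widetilde{\gamma}_\mu)_0(c)$ only for $n$ large (a caveat the paper passes over in silence, and which is harmless because only the tail of the sequence is used later).
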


\begin{lemma}\label{Lem:PS-true}
Let $\bar{p}< p<4^*$ and $c,\mu>0$. For the sequence $\{(v_n,s_n)\}$ obtained in Proposition \ref{Pro:three-tildeJ}, set $u_n:=\mathcal H(v_n,s_n)=e^{\frac{Ns_n}{2}}v_n(e^{s_n} x)$,
then there hold that
\begin{itemize}
\item[(i)]  $I(u_n)\to\gamma_\mu(c)$ as $n\to\infty$;
\item[(ii)] $\partial_s \tilde{I}(v_n,s_n)=2\|\Delta u_n\|^2_2+\|\nabla u_n\|^2_2-2\mu \gamma_p\|u_n\|^p_p-2\|u_n\|^{4^*}_{4^*}\to 0$
as $n\to\infty$;
\item[(iii)] $\{\|u_n\|_{H^2}\}$ and $\{\frac{\mu}{p}\|u_n\|^p_p+\frac{1}{4^*}\|u_n\|^{4^*}_{4^*}\}$ are bounded;
\item[(iv)] $|\langle I'(u_n),\omega \rangle_{(H^2(\mathbb{R}^N))'\times H^2(\mathbb{R}^N)}|\leq \frac{2e^4}{\sqrt{n}}\|\omega\|_{H^2}$ for all
$\omega\in T_{u_n}\!\!:=\{\omega\in H^2(\mathbb R^N),\langle u_n,\omega \rangle_{L^2}=0\}$.
\end{itemize}
\end{lemma}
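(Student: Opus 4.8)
The plan is to transfer the constrained almost-criticality of the sequence $\{(v_n,s_n)\}$ for $\tilde I$ to the rescaled sequence $u_n=\mathcal H(v_n,s_n)$, using that $\tilde I(u,s)=I(\mathcal H(u,s))$ (see (\ref{eqn:Defn-tilde-I})), that $\mathcal H$ preserves the $L^2$-norm (so $u_n\in S_r(c)$), and the scalings $\|\Delta u_n\|_2^2=e^{4s_n}\|\Delta v_n\|_2^2$, $\|\nabla u_n\|_2^2=e^{2s_n}\|\nabla v_n\|_2^2$, $\|u_n\|_p^p=e^{2p\gamma_p s_n}\|v_n\|_p^p$, $\|u_n\|_{4^*}^{4^*}=e^{2\cdot4^{*}s_n}\|v_n\|_{4^*}^{4^*}$ recorded at the start of Lemma \ref{Lem:Delta-mathcalH-property}. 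Then (i) is immediate: $I(u_n)=\tilde I(v_n,s_n)\in[\widetilde\gamma_\mu(c)-\tfrac1n,\widetilde\gamma_\mu(c)+\tfrac1n]$ by Proposition \ref{Pro:three-tildeJ}(i), and $\widetilde\gamma_\mu(c)=\gamma_\mu(c)$ by Lemma \ref{Lem:gamma-tildegamma-equal}, so $I(u_n)\to\gamma_\mu(c)$. For (ii) I would differentiate (\ref{eqn:Defn-tilde-I}) in $s$ and substitute the scalings to obtain $\partial_s\tilde I(v_n,s_n)=2\|\Delta u_n\|_2^2+\|\nabla u_n\|_2^2-2\mu\gamma_p\|u_n\|_p^p-2\|u_n\|_{4^*}^{4^*}$; its vanishing follows by testing Proposition \ref{Pro:three-tildeJ}(iii) with the admissible direction $\omega=(0,1)\in\widetilde T_{(v_n,s_n)}$, since $\|(0,1)\|_E=1$ gives $|\partial_s\tilde I(v_n,s_n)|\le 2/\sqrt n\to0$.

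For (iii), I observe that the expression in (ii) is exactly $2P(u_n)$ (compare (\ref{eqn:Pohozaev-identify})), so $P(u_n)\to0$. Combining $I(u_n)\to\gamma_\mu(c)$ with $P(u_n)=o_n(1)$, I would rerun the algebra of (\ref{eqn:Iu-geq-strict-xi-0}) but retaining the error: writing out $I(u_n)-\tfrac1{4^*}P(u_n)$ and using $\mu\gamma_p\|u_n\|_p^p=\|\Delta u_n\|_2^2+\tfrac12\|\nabla u_n\|_2^2-\|u_n\|_{4^*}^{4^*}-P(u_n)\le\|\Delta u_n\|_2^2+\tfrac12\|\nabla u_n\|_2^2+|P(u_n)|$, one reaches $I(u_n)-\tfrac1{4^*}P(u_n)\ge(\tfrac12-\tfrac1{p\gamma_p})\|\Delta u_n\|_2^2+(\tfrac12-\tfrac1{2p\gamma_p})\|\nabla u_n\|_2^2-C|P(u_n)|$. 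Since $p\gamma_p=\tfrac{N(p-2)}{4}>2$ for $\bar p<p<4^*$, both coefficients are positive, and as the left side converges while $|P(u_n)|\to0$ the sum $\|\Delta u_n\|_2^2+\|\nabla u_n\|_2^2$ is bounded; with $\|u_n\|_2^2=c$ this bounds $\{\|u_n\|_{H^2}\}$. The bound on $\{\tfrac\mu p\|u_n\|_p^p+\tfrac1{4^*}\|u_n\|_{4^*}^{4^*}\}$ then follows from the identity $\tfrac\mu p\|u_n\|_p^p+\tfrac1{4^*}\|u_n\|_{4^*}^{4^*}=\tfrac12\|\Delta u_n\|_2^2+\tfrac12\|\nabla u_n\|_2^2-I(u_n)$. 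I also note for later that $I(u_n)\le\tfrac12(\|\Delta u_n\|_2^2+\|\nabla u_n\|_2^2)$ with $I(u_n)\to\gamma_\mu(c)>0$ gives a lower bound $\|\Delta u_n\|_2^2+\|\nabla u_n\|_2^2\ge\gamma_\mu(c)$ for large $n$.

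The heart of the lemma is (iv), the transfer of the gradient estimate. Given $\omega\in T_{u_n}$, I set $\tilde\omega:=\mathcal H(\omega,-s_n)$; a change of variables shows $\langle v_n,\tilde\omega\rangle_{L^2}=\langle u_n,\omega\rangle_{L^2}=0$, so $(\tilde\omega,0)\in\widetilde T_{(v_n,s_n)}$. Because $\mathcal H(\cdot,s_n)$ is linear in its first slot and $\mathcal H(\mathcal H(\omega,-s_n),s_n)=\omega$, the chain rule yields $\langle I'(u_n),\omega\rangle=\partial_u\tilde I(v_n,s_n)[\tilde\omega]=\langle\tilde I'(v_n,s_n),(\tilde\omega,0)\rangle$, which Proposition \ref{Pro:three-tildeJ}(iii) bounds by $\tfrac2{\sqrt n}\|\tilde\omega\|_{H^2}$. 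It remains to control the dilation, and here $\|\tilde\omega\|_{H^2}^2=\|\omega\|_2^2+e^{-2s_n}\|\nabla\omega\|_2^2+e^{-4s_n}\|\Delta\omega\|_2^2$, which is $\le\|\omega\|_{H^2}^2$ when $s_n\ge0$ and $\le e^{-4s_n}\|\omega\|_{H^2}^2$ when $s_n\le0$. Thus the main obstacle is a uniform lower bound on $s_n$: I expect to prove $s_n\ge-2$ for $n$ large, so that $\|\tilde\omega\|_{H^2}\le e^{4}\|\omega\|_{H^2}$ and the desired bound $|\langle I'(u_n),\omega\rangle|\le\tfrac{2e^4}{\sqrt n}\|\omega\|_{H^2}$ follows. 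This lower bound is precisely what prevents the rescaling from degenerating; I would extract it from the two-sided control of $\|\Delta u_n\|_2^2+\|\nabla u_n\|_2^2$ obtained in (iii) together with the closeness estimate Proposition \ref{Pro:three-tildeJ}(ii), which keeps $(v_n,s_n)$ near an almost-minimizing path, ruling out $s_n\to-\infty$ since the energy would otherwise fall below the strictly positive level $\gamma_\mu(c)$.
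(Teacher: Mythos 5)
Parts (i) and (ii) of your proposal coincide with the paper's proof. Part (iii) is correct but takes a slightly different route: where the paper works with the combination $N\tilde I(v_n,s_n)+\partial_s\tilde I(v_n,s_n)$ and plays it against the boundedness of $\tilde I(v_n,s_n)$, you observe that $\partial_s\tilde I(v_n,s_n)=2P(u_n)\to 0$ and rerun the algebra of (\ref{eqn:Iu-geq-strict-xi-0}) keeping the error term $|P(u_n)|$; since $p\gamma_p=\tfrac{N(p-2)}{4}>2$ for $\bar p<p<4^*$, the coefficients $\tfrac12-\tfrac1{p\gamma_p}$ and $\tfrac12-\tfrac1{2p\gamma_p}$ are positive and the boundedness of $\|\Delta u_n\|_2^2+\|\nabla u_n\|_2^2$ follows. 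Both arguments are linear combinations of the same two facts, so this is a legitimate minor variant.

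The genuine gap is in (iv), at exactly the step you yourself call the main obstacle: the lower bound on $s_n$. Neither mechanism you propose delivers it. The two-sided control of $\|\Delta u_n\|_2^2+\|\nabla u_n\|_2^2$ from (iii) says nothing about $s_n$, because $\|\Delta u_n\|_2^2=e^{4s_n}\|\Delta v_n\|_2^2$ and nothing in Proposition \ref{Pro:three-tildeJ} bounds $\|\Delta v_n\|_2$: the scenario $s_n\to-\infty$ with $\|\Delta v_n\|_2\to\infty$ and bounded product is fully consistent with everything established so far. The energetic argument fails for the same reason: $\tilde I(v_n,s_n)=I(u_n)$ is pinned near $\gamma_\mu(c)>0$ by Proposition \ref{Pro:three-tildeJ}(i) no matter what $s_n$ does, since the decay $\tilde I(v,s)\to0^+$ as $s\to-\infty$ in Lemma \ref{Lem:Delta-mathcalH-property} holds for \emph{fixed} $v$, not along a sequence $v_n$ whose norms may blow up. Finally, Proposition \ref{Pro:three-tildeJ}(ii) by itself only gives $|s_n-(g_n)_2(t_n)|\le 1/\sqrt n$ for some $t_n$, where $(g_n)_2$ is the second component of the chosen path; by (\ref{eqn:Defn-widetilde-Gamma}) this component is required to vanish only at $t=0,1$, so for a general almost-optimal path this is no information at all.

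The missing idea, which is the crux of the paper's proof of (iv), is that one is free to \emph{choose} the paths fed into Proposition \ref{Pro:three-tildeJ} with second component identically zero: since $\widetilde\gamma_\mu(c)=\gamma_\mu(c)$ (Lemma \ref{Lem:gamma-tildegamma-equal}), one may take $h_n\in\Gamma(c)$ with $\max_{t\in[0,1]}I(h_n(t))\le\gamma_\mu(c)+\tfrac1n$ and set $g_n(t):=(h_n(t),0)\in\widetilde\Gamma(c)$. With this choice Proposition \ref{Pro:three-tildeJ}(ii) reads $|s_n|^2=|s_n-0|^2\le\min_{t\in[0,1]}\|(v_n,s_n)-g_n(t)\|_E^2\le\tfrac1n$, so $|s_n|\le 1/\sqrt n\le 1$, whence $\|(\tilde\omega,0)\|_E^2\le e^{4|s_n|}\|\omega\|_{H^2}^2\le e^4\|\omega\|_{H^2}^2$ and the stated bound $\tfrac{2e^4}{\sqrt n}\|\omega\|_{H^2}$ follows with room to spare. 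Once this choice is made, the rest of your (iv) — the map $\tilde\omega=\mathcal H(\omega,-s_n)$, the identity $\langle I'(u_n),\omega\rangle=\langle\tilde I'(v_n,s_n),(\tilde\omega,0)\rangle_{E'\times E}$, and the membership $(\tilde\omega,0)\in\widetilde T_{(v_n,s_n)}$ — is exactly the paper's computation and is correct.
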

\begin{proof}
Observe that $I(u_n)=\tilde{I}(v_n,s_n)$ and $\widetilde{\gamma}_\mu(c)=\gamma_\mu(c)$, \emph{(i)} is a direct conclusion from Proposition \ref{Pro:three-tildeJ}-\emph{(i)}.

\emph{(ii)} Letting $\partial_s \tilde{I}(v_n,s_n)=\langle \tilde{I}'(v_n,s_n),(0,1)\rangle_{E'\times E}$, in view of Proposition \ref{Pro:three-tildeJ}-\emph{(iii)}, we see that
$$
\partial_s \tilde{I}(v_n,s_n)\to 0 \ \ \mbox{as}\ \  n\to \infty.
$$
More precisely, a simple calculation indicates that
$$
\partial_s \tilde{I}(v_n,s_n)=2\|\Delta u_n\|^2_2+\|\nabla u_n\|^2_2-2\mu \gamma_p\|u_n\|^p_p-2\|u_n\|^{4^*}_{4^*}\to 0\ \mbox{as}\ n\to \infty.
$$

\emph{(iii)} Note the boundedness of $\tilde{I}(v_n,s_n)$, it follows that
\begin{equation}\label{eqn:inrquality-2}
-C_1\leq\tilde{I}(v_n,s_n)=I(u_n)=\frac{1}{2}\Big(\|\Delta u_n\|^2_2+\|\nabla u_n\|^2_2\Big)-\Big(\frac{\mu}{p}\|u_n\|^p_p+\frac{1}{4^*}\|u_n\|^{4^*}_{4^*}\Big)\leq C_1\\
\end{equation}
for some constant $C_1>0$. When $\bar{p}<p<4^*$, the boundedness of $\{\tilde{I}(v_n,s_n)\}$ and $\{\partial_s \tilde{I}(v_n,s_n)\}$ also brings that for some constant $C_2>0$,
 \begin{equation}\label{eqn:inrquality-1}
\begin{aligned}
-C_2&\leq N\tilde{I}(v_n,s_n)+\partial_s \tilde{I}(v_n,s_n)\\
&=\frac{N}{2}\|\Delta u_n\|^2_2+\frac{N}{2}\|\nabla u_n\|^2_2-\frac{\mu N}{p}\|u_n\|^p_p-\frac{N}{4^*}\|u_n\|^{4^*}_{4^*}+2\|\Delta u_n\|^2_2+\|\nabla u_n\|^2_2\\
&\ \ \ -2\mu \gamma_p\|u_n\|^p_p-2\|u_n\|^{4^*}_{4^*}\\
&=\frac{N}{2}\|\Delta u_n\|^2_2+\frac{N}{2}\|\nabla u_n\|^2_2-\frac{\mu N}{p}\|u_n\|^p_p-\frac{N}{4^*}\|u_n\|^{4^*}_{4^*}+2\|\Delta u_n\|^2_2+\|\nabla u_n\|^2_2\\
&\ \ \ -\mu N\Big(\frac{1}{2}-\frac{1}{p}\Big)\|u_n\|^p_p-2\|u_n\|^{4^*}_{4^*}\\
&\leq\frac{N+4}{2}\Big(\|\Delta u_n\|^2_2+\|\nabla u_n\|^2_2\Big)-\frac{Np}{2}\Big(\frac{\mu}{p}\|u_n\|^p_p+\frac{1}{4^*}\|u_n\|^{4^*}_{4^*}\Big),
\end{aligned}
\end{equation}
since $\frac{N}{4^*}+2-\frac{Np}{24^*}=\frac{2N-(N-4)p}{4}>0$. Combining (\ref{eqn:inrquality-2}) and (\ref{eqn:inrquality-1}), we deduce that
$$
\begin{aligned}
-C_2&\leq\frac{N+4}{2}\Big(\|\Delta u_n\|^2_2+\|\nabla u_n\|^2_2\Big)-\frac{Np}{2}\Big(\frac{\mu}{p}\|u_n\|^p_p+\frac{1}{4^*}\|u_n\|^{4^*}_{4^*}\Big)\\
&\leq (N+4)C_1+(N+4)\Big(\frac{\mu}{p}\|u_n\|^p_p+\frac{1}{4^*}\|u_n\|^{4^*}_{4^*}\Big)
-\frac{Np}{2}\Big(\frac{\mu}{p}\|u_n\|^p_p+\frac{1}{4^*}\|u_n\|^{4^*}_{4^*}\Big)\\
&= (N+4)C_1-\Big(\frac{Np}{2}-N-4\Big)\Big(\frac{\mu}{p}\|u_n\|^p_p+\frac{1}{4^*}\|u_n\|^{4^*}_{4^*}\Big).
\end{aligned}
$$
Since $2+\frac{8}{N}<p<4^*$, the above inequality ensures that
\begin{equation}\label{varrho-1-define}
\frac{\mu}{p}\|u_n\|^p_p+\frac{1}{4^*}\|u_n\|^{4^*}_{4^*}\leq (N+4)C_1+C_2=:\varrho_1.
\end{equation}
Consequently,
\begin{equation}\label{varrho-2-define}
\begin{aligned}
\|\Delta u_n\|^2_2+\|\nabla u_n\|^2_2&\leq 2\Big(\frac{\mu}{p}\|u_n\|^p_p+\frac{1}{4^*}\|u_n\|^{4^*}_{4^*}\Big)+2C_1\\
&\leq 2(N+4)C_1+2C_2+2C_1=2(N+5) C_1+2C_2=:\varrho_2.
\end{aligned}
\end{equation}

\emph{(iv)} For $h_n\in T_{u_n}$ and $\tilde{h}_n(x):=e^{-\frac{Ns_n}{2}}h_n(e^{-s_n}x)$, we have
$$
\begin{aligned}
\langle &I'(u_n),h_n \rangle_{(H^2)'\times H^2}\\
&=e^\frac{(N+4)s_n}{2}\int_{\mathbb R^N}\!\!\!\!\!\!\Delta v_n(e^{s_n}x) \Delta h_n(x) dx
+e^\frac{(N+2)s_n}{2}\int_{\mathbb R^N}\!\!\!\!\!\!\nabla v_n(e^{s_n}x)\cdot \nabla h_n(x) dx\\
&\ \ \ -\mu e^\frac{N(p-1)s_n}{2}\!\!\!\int_{\mathbb R^N}\!\!\!|v_n(e^{s_n}x)|^{p-2}v_n(e^{s_n}x) h_n(x)dx - e^\frac{N(4^*-1)s_n}{2}\!\!\!\int_{\mathbb R^N}\!\!\!|v_n(e^{s_n}x)|^{4^*-2}v_n(e^{s_n}x) h_n(x)dx\\
&=\!e^{4s_n}\!\!\int_{\mathbb R^N}\!\!\!\!\!\!\!\Delta v_n(x) \Delta \Big(h_n\!(e^{-s_n}x)e^{-\frac{Ns_n}{2}}\Big)dx\!
+\!e^{2s_n}\!\!\int_{\mathbb R^N}\!\!\!\!\!\!\!\nabla v_n(x)\cdot \nabla \Big(h_n\!(e^{-s_n}x)e^{-\frac{Ns_n}{2}}\Big)dx\\
&\ \ \ -\!\!\mu e^\frac{N(p-2)s_n}{2}\!\!\!\!
\int_{\mathbb R^N}\!\!\!\!\!\!|v_n(x)|^{p-2}\!v_n(x) h_n\!(e^{-s_n}x)e^{-\frac{Ns_n}{2}}\!dx
-\!e^\frac{N(4^*-2)s_n}{2}\!\!\!\!
\int_{\mathbb R^N}\!\!\!\!\!\!|v_n(x)|^{4^*-2}\!v_n(x) h_n\!(e^{-s_n}x)e^{-\frac{Ns_n}{2}}\!dx\\
&=\!e^{4s_n}\!\!\int_{\mathbb R^N}\!\!\!\!\!\!\!\Delta v_n(x) \Delta \Big(h_n\!(e^{-s_n}x)e^{-\frac{Ns_n}{2}}\Big)dx\!
+\!e^{2s_n}\!\!\int_{\mathbb R^N}\!\!\!\!\!\!\!\nabla v_n(x)\cdot \nabla \Big(h_n\!(e^{-s_n}x)e^{-\frac{Ns_n}{2}}\Big)dx\\
&\ \ \ -\mu e^{2p\gamma_ps_n}\!\!\!\!
\int_{\mathbb R^N}\!\!\!\!\!\!|v_n(x)|^{p-2}\!v_n(x) h_n\!(e^{-s_n}x)e^{-\frac{Ns_n}{2}}\!dx
-\!e^{24^*s_n}\!\!\!\!
\int_{\mathbb R^N}\!\!\!\!\!\!|v_n(x)|^{4^*-2}\!v_n(x) h_n\!(e^{-s_n}x)e^{-\frac{Ns_n}{2}}\!dx\\
&=\langle \tilde{I}'(v_n,s_n),(\tilde{h}_n,0) \rangle_{E'\times E}.\\
\end{aligned}
$$
In addition, since $h_n\in T_{u_n}$, that is, $\langle u_n,h_n \rangle_{L^2}=0$, it gives that
$$
0=\int_{\mathbb R^N}e^\frac{Ns_n}{2}v_n(e^{s_n}x)h_n(x)dx=\int_{\mathbb R^N}v_n(x)e^{-\frac{Ns_n}{2}}h_n(e^{-s_n}x)dx=\langle v_n, \tilde{h}_n\rangle_{L^2}.
$$
Hence, $(\tilde{h}_n,0)\in \widetilde{T}_{(v_n,s_n)}$, where $\widetilde{T}_{(v_n,s_n)}$ is given in Proposition \ref{Pro:three-tildeJ}. Using this fact and Proposition
\ref{Pro:three-tildeJ}-\emph{(iii)}, we see that
$$
|\langle I'(u_n),h_n \rangle_{(H^2)'\times H^2}|=|\langle \tilde{I}'(v_n,s_n),(\tilde{h}_n,0) \rangle_{E'\times E}|\leq \frac{2}{\sqrt{n}}\|(\tilde{h}_n,0)\|_{E}.
$$
Therefore, it only remains to verify that $\|(\tilde{h}_n,0)\|_{E}\leq e^4\|h_n\|_{H^2}$ for all $n$. Indeed, since $\widetilde{\gamma}_\mu(c)=\gamma_\mu(c)$, there exists
$\{g_n\}\subset \widetilde{\Gamma}(c)$, being of the form $g_n(t)=((g_n)_1(t),0)\in E$, $t\in[0,1]$, such that
$$
\max\limits_{t\in[0,1]}\tilde{I}(g_n(t))\in[\gamma_\mu(c)
-\frac{1}{n},\gamma_\mu(c)+\frac{1}{n}].
$$
Then, by Proposition \ref{Pro:three-tildeJ}-\emph{(ii)}, it brings that
$$
|s_n|^2=|s_n-0|^2\leq\min\limits_{t\in[0,1]}\|(v_n,s_n)-g_n(t)\|^2_E\leq\frac{1}{n}.
$$
Consequently, we obtain that
$$
\begin{aligned}
\|(\tilde{h}_n,0)\|^2_{E}=\|\tilde{h}_n\|^2_{H^2}&=e^{-4s_n}\!\!\int_{\mathbb R^N}\!\!\!\!|\Delta h_n(x)|^2dx+e^{-2s_n}\!\!\int_{\mathbb R^N}\!\!\!\!|\nabla h_n(x)|^2dx+\int_{\mathbb R^N}|h_n(x)|^2dx\\
&\leq e^\frac{4}{\sqrt{n}}\!\!\int_{\mathbb R^N}\!\!\!\!|\Delta h_n(x)|^2dx+e^\frac{2}{\sqrt{n}}\!\!\int_{\mathbb R^N}\!\!\!\!|\nabla h_n(x)|^2dx+\int_{\mathbb R^N}|h_n(x)|^2dx\\
&\leq e^4\Big(\int_{\mathbb R^N}|\Delta h_n(x)|^2dx+\int_{\mathbb R^N}|\nabla h_n(x)|^2dx+\int_{\mathbb R^N}|h_n(x)|^2dx\Big)\\
&= e^4\|h_n\|^2_{H^2}.
\end{aligned}
$$
\end{proof}

Next, we make some further discussions about the sequence $\{u_n\}$ appearing in Lemma \ref{Lem:PS-true}, when the corresponding level $m_r(c)$ is restricted to one reasonable scope.

\begin{lemma}\label{pro:solution-either-or-true}
Let $N\geq 5$, $\bar{p}< p<4^*$ and $c,\mu>0$. Suppose that $\{u_n\}\subset S_r(c)$ is the sequence given in Lemma \ref{Lem:PS-true} with
$0<m_r(c)<\frac{2}{N}S^\frac{N}{4}$, where $S$ and $m_r(c)$ are given in (\ref{eqn:Defn-S}) and (\ref{eqn:gamma-mu-widegamma-mu}). Then one of the following alternatives holds:
\begin{itemize}
\item[(i)] either up to a subsequence $u_n\rightharpoonup \bar{u}$ in $H^2_{r}({\mathbb R^N})$ but not strongly, where $\bar{u}\neq0$ is a solution to
\begin{equation}\label{eqn:equation-lambdac-solution}
{\Delta}^2u-\Delta u-\lambda u-\mu |u|^{p-2}u-|u|^{4^*-2}u=0\ \ \mbox{in}\ \ \mathbb{R}^N
\end{equation}
with $\lambda_n\to \bar{\lambda}<0$ for large enough $\mu$ and $I(\bar{u})\leq m_r(c)-\frac{2}{N}S^\frac{N}{4}$, where $\lambda_n$ is given in
(\ref{eqn:lambda-n-define}) below;
\item[(ii)] or up to a subsequence $u_n\to \bar{u}$ strongly in $H^2_{r}(\mathbb{R}^N)$, $I(\bar{u})=m_r(c)$ and $u$ solves problem (\ref{eqn:BS-equation-L2-Super+Critical}) for some $\bar{\lambda}<0$ when $\mu$ large enough.
\end{itemize}
\end{lemma}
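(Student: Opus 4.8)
The plan is to run a concentration--compactness argument on the sequence $\{u_n\}$ furnished by Lemma \ref{Lem:PS-true}, whose three outputs I would use throughout: $I(u_n)\to m_r(c)$, the asymptotic Pohozaev relation $2P(u_n)=\partial_s\tilde I(v_n,s_n)\to0$, and the uniform $H^2$-bound together with $|\langle I'(u_n),\omega\rangle|\le\tfrac{2e^4}{\sqrt n}\|\omega\|_{H^2}$ for $\omega\in T_{u_n}$. By the compact embedding $H^2_r(\mathbb R^N)\hookrightarrow L^p(\mathbb R^N)$ with $2<p<4^*$, I pass to a subsequence so that $u_n\rightharpoonup\bar u$ in $H^2_r$, $u_n\to\bar u$ in $L^p$ and a.e. Exactly as in Lemma \ref{Lem:local-ps-satisfies}, I invoke \cite[Proposition 5.12]{Willem1996} to generate Lagrange multipliers $\lambda_n$ (the $\lambda_n$ named in the statement) obeying $\|\Delta u_n\|_2^2+\|\nabla u_n\|_2^2-\mu\|u_n\|_p^p-\|u_n\|_{4^*}^{4^*}=\lambda_n c+o_n(1)$; the bound on $\{u_n\}$ makes $\{\lambda_n\}$ bounded, hence $\lambda_n\to\bar\lambda$ along a subsequence. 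Letting $n\to\infty$ in the weak formulation (the subcritical term converges by $L^p$-compactness, the critical one by a.e.\ convergence plus boundedness in $L^{(4^*)'}$) shows $\bar u$ solves (\ref{eqn:equation-lambdac-solution}) with multiplier $\bar\lambda$, and the associated Pohozaev identity (cf.\ \cite[Lemma 2.1]{Bonheure-Casteras-Gou-Jeanjean2017}) yields $P(\bar u)=0$.

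Next I would prove $\bar u\neq0$ using only $0<m_r(c)<\tfrac2N S^{\frac N4}$. If $\bar u=0$ then $\|u_n\|_p^p\to0$, so writing $a':=\lim\|\Delta u_n\|_2^2$, $d':=\lim\|\nabla u_n\|_2^2$, $b':=\lim\|u_n\|_{4^*}^{4^*}$, the conditions $I(u_n)\to m_r(c)$ and $P(u_n)\to0$ give $b'=a'+\tfrac12 d'$ and $\tfrac2N a'+(\tfrac12-\tfrac1{2\cdot4^*})d'=m_r(c)$. Since $b'\ge a'$ while Sobolev (\ref{eqn:Defn-S}) gives $b'\le S^{-\frac{4^*}2}(a')^{\frac{4^*}2}$, either $a'=0$ (forcing $b'=d'=0$, hence $m_r(c)=0$) or $a'\ge S^{\frac N4}$ (forcing $m_r(c)\ge\tfrac2N S^{\frac N4}$); both contradict the hypothesis, so $\bar u\neq0$.

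The step I expect to be the main obstacle is showing $\bar\lambda<0$ for large $\mu$, since this is precisely where the extra dispersion term $\|\nabla u\|_2^2$ intrudes. Subtracting $P(\bar u)=0$ from the tested limit equation $\|\Delta\bar u\|_2^2+\|\nabla\bar u\|_2^2-\mu\|\bar u\|_p^p-\|\bar u\|_{4^*}^{4^*}=\bar\lambda\|\bar u\|_2^2$ produces the clean identity $\bar\lambda\|\bar u\|_2^2=\tfrac12\|\nabla\bar u\|_2^2-\mu(1-\gamma_p)\|\bar u\|_p^p$, so $\bar\lambda<0$ is equivalent to $\mu(1-\gamma_p)\|\bar u\|_p^p>\tfrac12\|\nabla\bar u\|_2^2$. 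To bound the right-hand side I would feed the coercivity estimate (\ref{eqn:Iu-geq-strict-xi-0}) (valid whenever $P(u)=0$) together with $I(\bar u)\le m_r(c)$ (which follows from the Brezis--Lieb splitting below, the lost energy being nonnegative) into $\|\nabla\bar u\|_2^2\le\|\Delta\bar u\|_2^2+\|\nabla\bar u\|_2^2\le(\tfrac12-\tfrac1{p\gamma_p})^{-1}m_r(c)$, and then use the level estimate of Lemma \ref{Lem:gamma-mu-estimate} to make $m_r(c)$ small for large $\mu$; simultaneously a Sobolev/Pohozaev lower bound on nontrivial solutions (in the spirit of Lemma \ref{Lem:gamma-tildegamma-equal}) keeps $\mu\|\bar u\|_p^p$ from degenerating. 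Reconciling these two estimates is the delicate point and is exactly the novel treatment advertised for the $\|\nabla u\|_2^2$ term.

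Finally I would resolve the dichotomy via Brezis--Lieb with $w_n:=u_n-\bar u\rightharpoonup0$. The splittings $\|\Delta u_n\|_2^2=\|\Delta\bar u\|_2^2+\|\Delta w_n\|_2^2+o_n(1)$, its $\nabla$-analogue, $\|u_n\|_{4^*}^{4^*}=\|\bar u\|_{4^*}^{4^*}+\|w_n\|_{4^*}^{4^*}+o_n(1)$, and $\|w_n\|_p^p\to0$, combined with $P(\bar u)=0$, turn $P(u_n)\to0$ into $\|\Delta w_n\|_2^2+\tfrac12\|\nabla w_n\|_2^2-\|w_n\|_{4^*}^{4^*}\to0$. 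Set $a:=\lim\|\Delta w_n\|_2^2$. If $u_n\not\to\bar u$ strongly then $a>0$, and the last relation with Sobolev (\ref{eqn:Defn-S}) forces $a\ge S^{\frac N4}$; expanding $I(u_n)\to m_r(c)$ gives $I(\bar u)=m_r(c)-\bigl[\tfrac2N a+(\tfrac12-\tfrac1{2\cdot4^*})\lim\|\nabla w_n\|_2^2\bigr]\le m_r(c)-\tfrac2N S^{\frac N4}$, which is alternative (i). If $a=0$, the same relation yields $\|\nabla w_n\|_2^2\to0$ and $\|w_n\|_{4^*}^{4^*}\to0$, and feeding $\bar\lambda<0$ into the tested difference of equations also gives $\|w_n\|_2^2\to0$, so $u_n\to\bar u$ strongly in $H^2_r$ with $\|\bar u\|_2^2=c$ and $I(\bar u)=m_r(c)$, which is alternative (ii). (In the proof of the theorem, alternative (i) will then be excluded, since a nontrivial solution with $P(\bar u)=0$ satisfies $I(\bar u)>0$ by (\ref{eqn:Iu-geq-strict-xi-0}), whereas (i) forces $I(\bar u)<0$.)
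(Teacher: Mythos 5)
Most of your proposal reproduces the paper's own argument: the multipliers from \cite[Proposition 5.12]{Willem1996} and the identity $\|\Delta u_n\|^2_2+\|\nabla u_n\|^2_2-\mu\|u_n\|^p_p-\|u_n\|^{4^*}_{4^*}=\lambda_n c+o_n(1)$; the non-vanishing of $\bar{u}$ from $0<m_r(c)<\frac{2}{N}S^{\frac{N}{4}}$ (your bookkeeping with $a',d',b'$ is an equivalent repackaging of the paper's single limit $l=\lim(\|\Delta u_n\|^2_2+\tfrac12\|\nabla u_n\|^2_2)$); the Br\'ezis--Lieb splitting combined with $P(\bar{u})=0$ to reduce $P(u_n)\to 0$ to $\|\Delta w_n\|^2_2+\tfrac12\|\nabla w_n\|^2_2-\|w_n\|^{4^*}_{4^*}\to 0$, the Sobolev dichotomy, and the energy expansion giving $I(\bar{u})\le m_r(c)-\frac{2}{N}S^{\frac{N}{4}}$ in the non-compact case; and, in the compact case, $L^2$-convergence by testing the difference of the two equations with $w_n$ and using $\bar{\lambda}<0$. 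All of these steps are correct and coincide with the paper.

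The genuine gap is your treatment of $\bar{\lambda}<0$, precisely the step you flag as ``delicate'' and leave open. Your identity $\bar{\lambda}\|\bar{u}\|^2_2=\tfrac12\|\nabla \bar{u}\|^2_2-\mu(1-\gamma_p)\|\bar{u}\|^p_p$ is correct, but the plan to close it does not work with the tools available: (a) Lemma \ref{Lem:gamma-mu-estimate} only provides the $\mu$-uniform bound $m_r(c)<\frac{2}{N}S^{\frac{N}{4}}$; it says nothing about $m_r(c)$ becoming small as $\mu\to\infty$, so the citation does not support that claim; (b) no lemma in the paper gives a lower bound keeping $\mu\|\bar{u}\|^p_p$ from degenerating, and this requirement actually pulls against (a): if $m_r(c)$ were small, then your own coercivity bound $\|\Delta\bar{u}\|^2_2+\|\nabla\bar{u}\|^2_2\le(\tfrac12-\tfrac{1}{p\gamma_p})^{-1}m_r(c)$ together with (\ref{eqn:GNinequality}) would force $\|\bar{u}\|^p_p$ to be small as well, so one would have to compare rates, which you do not do. The paper avoids this entirely by arguing along the sequence rather than at the limit function: using (\ref{eqn:Jun-un-on1}) and $\partial_s\tilde{I}(v_n,s_n)\to 0$ it writes $\bar{\lambda}c=\lim_n\bigl(\tfrac12\|\nabla u_n\|^2_2+\mu(\gamma_p-1)\|u_n\|^p_p\bigr)\le\frac{\varrho_2}{2}+\mu(\gamma_p-1)\|\bar{u}\|^p_p$, where $\varrho_2$ is the fixed a priori constant (\ref{varrho-2-define}) from Lemma \ref{Lem:PS-true}\,(iii), obtained solely from the boundedness of the level and of $\partial_s\tilde{I}$; no smallness of $m_r(c)$ is needed, only $\bar{u}\neq 0$ and $\mu$ large. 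The minimal repair of your version is to discard the $m_r(c)$-route and bound the gradient term by weak lower semicontinuity, $\tfrac12\|\nabla\bar{u}\|^2_2\le\liminf_n\tfrac12\|\nabla u_n\|^2_2\le\frac{\varrho_2}{2}$, after which your identity yields exactly the paper's inequality and conclusion.
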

\begin{proof}
For the sequence $\{u_n=\mathcal{H}(v_n,s_n)\}$ determined in Lemma \ref{Lem:PS-true},
making use of \cite[Proposition 5.12]{Willem1996}, we know that there exists $\{\lambda_n\}\subset \mathbb R$ such that
$$
I'(u_n)-\lambda_n\Psi'(u_n)\to 0\ \ \ \mbox{as}\ \ \ n\to\infty,
$$
where $\Psi$ is given in (\ref{eqn:Psi-define}).
As a consequence, it infers that
\begin{equation}\label{eqn:Jun-varphi}
\int_{\mathbb R^N}\Delta u_n \Delta \varphi +\nabla u_n \cdot \nabla \varphi-\lambda_n u_n \varphi-\mu|u_n|^{p-2}u_n\varphi-|u_n|^{4^*-2}u_n\varphi \ dx=o_n(1)\|\varphi\|
\end{equation}
for any $\varphi\in H_r^2({\mathbb R^N})$.
Due to the boundedness of $\{u_n\}$ from Lemma \ref{Lem:PS-true}-\emph{(iii)}, we see that
\begin{equation}\label{eqn:Jun-un-on1}
\|\Delta u_n\|^2_2+\|\nabla u_n\|^2_2-\mu\|u_n\|^p_p-\|u_n\|^{4^*}_{4^*}=\lambda_n\|u_n\|^2_2+o_n(1)=\lambda_n c+o_n(1),
\end{equation}
which indicates that $\{\lambda_n\}$ is also a bounded sequence. Meanwhile, take into account the boundedness of $\{u_n\}$ and the compactness of
$H^2_{r}({\mathbb R^N})\hookrightarrow L^p({\mathbb R^N})$, there exists $\bar{u}\in H^2_{r}({\mathbb R^N})$ such that, up to a subsequence if necessary,
\begin{equation}\label{eqn:un-uc-H2-Lp-RN}
u_n \rightharpoonup \bar{u}\ \ \mbox{in}\ \ H^2_{r}(\mathbb{R}^N),\ \ u_n\to \bar{u} \ \ \mbox{in}\ \ L^p(\mathbb{R}^N)\ \ \mbox{and}\ \ u_n\to \bar{u} \ \ \mbox{a.e. in}\ \ \mathbb{R}^N.
\end{equation}
Let $w_n:=u_n-\bar{u}$, then (\ref{eqn:un-uc-H2-Lp-RN}) states that
\begin{equation}\label{eqn:wn-H2-Lp-RN}
w_n \rightharpoonup 0\ \ \mbox{in}\ \ H^2_{r}(\mathbb{R}^N),\ \ w_n\to 0 \ \ \mbox{in}\ \ L^p(\mathbb{R}^N)\ \ \mbox{and}\ \ w_n\to 0 \ \ \mbox{a.e. in}\ \ \mathbb{R}^N.
\end{equation}
Since $\{\lambda_n\}$ is bounded, we are allowed to choose a subsequence such that $\lambda_n\to \bar{\lambda}\in\mathbb R$. By means of (\ref{eqn:Jun-un-on1}), $\partial_s \tilde{I}(v_n, s_n) \rightarrow 0$, (\ref{varrho-2-define}), $u_n \rightarrow \bar{u}$ in $L^p(\mathbb{R}^N)$ and $0<\gamma_p=\frac{N(p-2)}{4 p}<1$, we deduce that
$$
\begin{aligned}
\bar{\lambda} c & =\lim _{n \rightarrow \infty} \lambda_n\|u_n\|_2^2 \\
& =\lim _{n \rightarrow \infty}\Big(\|\Delta u_n\|_2^2+\|\nabla u_n\|_2^2-\mu\|u_n\|_p^p-\|u_n\|_{4^*}^{4^*}\Big) \\
& =\lim _{n \rightarrow \infty}\Big(\frac{1}{2}\|\nabla u_n\|^2_2+\mu(\gamma_p-1)\|u_n\|_p^p\Big) \\
& \leq \lim _{n \rightarrow \infty}\Big(\frac{\varrho_2}{2}+\mu(\gamma_p-1)\|u_n\|_p^p\Big) \\
&= \frac{\varrho_2}{2}+\mu(\gamma_p-1)\|\bar{u}\|_p^p<0
\end{aligned}
$$
for $\mu$ sufficiently large and $\bar{u}\neq 0$. Now, we claim that the weak limit $\bar{u}$ does not vanish identically. Suppose by contradiction that $\bar{u}=0$.
Since $\{u_n\}$ is bounded in $H^2_r(\mathbb R^N)$, up to a subsequence we can assume that $\|\Delta u_n\|^2_2+\frac{1}{2}\|\nabla u_n\|^2_2\to l\geq0$. Recalling that $\partial_s\tilde{I}(v_n,s_n)\to 0$
and $u_n\to 0$ in $L^p(\mathbb R^N)$, we have
$$
0\leq\|u_n\|^{4^*}_{4^*}=\|\Delta u_n\|^2_2+\frac{1}{2}\|\nabla u_n\|^2_2-\mu\gamma_p\|u_n\|^p_p+o_n(1)\to l.
$$
Therefore, in light of (\ref{eqn:Defn-S}), it gives that
$$
S\leq \frac{\|\Delta u_n\|^2_2}{\|u_n\|^2_{4^*}}\leq \frac{\|\Delta u_n\|^2_2+\frac{1}{2}\|\nabla u_n\|^2_2}{\|u_n\|^2_{4^*}},
$$
which implies that either $l\geq S^\frac{N}{4}$ or $l=0$. If $l\geq S^\frac{N}{4}$, we infer that
$$
\begin{aligned}
m_r(c)+o_n(1)=I(u_n)&=\frac{1}{2}\|\Delta u_n\|^2_2+\frac{1}{2}\|\nabla u_n\|^2_2-\frac{\mu}{p}\|u_n\|^p_p-\frac{1}{4^*}\|u_n\|^{4^*}_{4^*}\\
&=\Big(\frac{1}{2}-\frac{1}{4^*}\Big)\|\Delta u_n\|^2_2+\frac{1}{2}\Big(1-\frac{1}{4^*}\Big)\|\nabla u_n\|^2_2-\mu\Big(\frac{1}{p}-\frac{\gamma_p}{4^*}\Big)\|u_n\|^p_p+o_n(1)\\
&\geq\Big(\frac{1}{2}-\frac{1}{4^*}\Big)\|\Delta u_n\|^2_2+\frac{1}{2}\Big(\frac{1}{2}-\frac{1}{4^*}\Big)\|\nabla u_n\|^2_2-\mu\Big(\frac{1}{p}-\frac{\gamma_p}{4^*}\Big)\|u_n\|^p_p+o_n(1)\\
&=\frac{2}{N}\Big(\|\Delta u_n\|^2_2+\frac{1}{2}\|\nabla u_n\|^2_2\Big)-\mu\Big(\frac{1}{p}-\frac{\gamma_p}{4^*}\Big)\|u_n\|^p_p+o_n(1),
\end{aligned}
$$
since $I(u_n)\to m_r(c)$ and Lemma \ref{Lem:PS-true} \emph{(ii)}. Hence, one has
$$
m_r(c)\geq \frac{2}{N}l\geq\frac{2}{N}S^\frac{N}{4},
$$
which contradicts with the assumption $m_r(c)<\frac{2}{N}S^\frac{N}{4}$. When $l=0$, we see that
$$
\|u_n\|_p\to 0,\ \ \|\Delta u_n\|_2\to 0,\ \ \|\nabla u_n\|_2\to 0\ \ \mbox{and}\ \ \  \|u_n\|^{4^*}_{4^*}\to 0\ \ \ \mbox{as}\ \ \ n\to\infty,
$$
which brings that $I(u_n)\to 0=m_r(c)$, an obvious contradiction. Hence, the weak limit $\bar{u}$ does not vanish identically. Passing to the limit in (\ref{eqn:Jun-varphi}) by weak convergence, we see that $\bar{u}$ is a weak solution of
\begin{equation}\label{eqn:equation-1}
\Delta^2u-\Delta u=\bar{\lambda} u+\mu|u|^{p-2}u+|u|^{4^*-2}u\ \ \mbox{in}\ \ H_r^{-2}(\mathbb R^N)
\end{equation}
Moreover, we know that $\bar{u}$ also satisfies the following identify
\begin{equation}\label{eqn:Pohozaev-identify-baru}
\begin{aligned}
P(\bar{u})&=\|\Delta \bar{u}\|^2_2+\frac{1}{2}\|\nabla \bar{u}\|^2_2-\mu \gamma_p\|\bar{u}\|^p_p-\|\bar{u}\|^{4^*}_{4^*}=0.
\end{aligned}
\end{equation}

By virtue of (\ref{eqn:wn-H2-Lp-RN}), the Br\'{e}zis-Lieb Lemma (\!\!\cite[Lemma 1.32]{Willem1996}) ensures that
\begin{equation}\label{eqn:unuo(1)}
\|u_n\|^2_2=\|w_n\|^2_2+\| \bar{u}\|^2_2+o_n(1),
\end{equation}
\begin{equation}\label{eqn:unpupo(1)}
\|u_n\|^{p}_{p}=\|w_n\|^p_p+\|\bar{u}\|^p_p+o_n(1)
\end{equation}
and
\begin{equation}\label{eqn:un4-wnp-uc4}
\|u_n\|^{4^*}_{4^*}=\|w_n\|^{4^*}_{4^*}+\|\bar{u}\|^{4^*}_{4^*}+o_n(1).
\end{equation}
In addition, recall that $w_n=u_n-\bar{u}\rightharpoonup 0$ in $H^2_{r}(\mathbb{R}^N)$ and together with \cite[Page 18: lines 18-19]{Bonheure-Casteras-Gou-Jeanjean2019}, it also indicates that
\begin{equation}\label{eqn:DunDuo-nabla}
\|\nabla u_n\|^2_2=\|\nabla w_n\|^2_2+\|\nabla \bar{u}\|^2_2+o_n(1)
\end{equation}
and
\begin{equation}\label{eqn:DunDuo(1)}
\|\Delta u_n\|^2_2=\|\Delta w_n\|^2_2+\|\Delta \bar{u}\|^2_2+o_n(1).
\end{equation}
Therefore, based on Lemma \ref{Lem:PS-true} \emph{(ii)} and $u_n\to \bar{u}$ in $L^p(\mathbb R^N)$, we deduce from (\ref{eqn:un4-wnp-uc4})-(\ref{eqn:DunDuo(1)}) that
$$
\begin{aligned}
\|\Delta \bar{u}\|^2_2+\|\Delta w_n\|^2_2+\frac{1}{2}\|\nabla \bar{u}\|^2_2+\frac{1}{2}\|\nabla w_n\|^2_2&=\|\Delta u_n\|^2_2+\frac{1}{2}\|\nabla u_n\|^2_2+o_n(1)\\
&=\mu \gamma_p\|u_n\|^p_p+\|u_n\|^{4^*}_{4^*}+o_n(1)\\
&=\mu \gamma_p\|\bar{u}\|^p_p+\|w_n\|^{4^*}_{4^*}+\|\bar{u}\|^{4^*}_{4^*}+o_n(1).
\end{aligned}
$$
Furthermore, according to (\ref{eqn:Pohozaev-identify-baru}), there holds that
$$
\|\Delta w_n\|^2_2+\frac{1}{2}\|\nabla w_n\|^2_2=\|w_n\|^{4^*}_{4^*}+o_n(1).
$$
So, up to a subsequence, it can be assumed that
\begin{equation}\label{eqn:Deltawn-wn4*-l}
\lim\limits_{n\to\infty}\Big(\|\Delta w_n\|^2_2+\frac{1}{2}\|\nabla w_n\|^2_2\Big)=\lim\limits_{n\to\infty}\|w_n\|^{4^*}_{4^*}=l\geq 0.
\end{equation}
Using (\ref{eqn:Defn-S}) again, we have
$$
S\leq \frac{\|\Delta w_n\|^2_2}{\|w_n\|^2_{4^*}}\leq \frac{\|\Delta w_n\|^2_2+\frac{1}{2}\|\nabla w_n\|^2_2}{\|w_n\|^2_{4^*}},
$$
which means $l\geq S l^\frac{2}{4^*}$, that is, either $l=0$ or $l\geq S^\frac{N}{4}$. If $l\geq S^\frac{N}{4}$, (\ref{eqn:unpupo(1)})-(\ref{eqn:Deltawn-wn4*-l}) guarantee that
$$
\begin{aligned}
m_r(c)=\lim\limits_{n\to\infty}I(u_n)&=\lim\limits_{n\to\infty}\Big(I(\bar{u})+\frac{1}{2}\|\Delta w_n\|^2_2+\frac{1}{2}\|\nabla w_n\|^2_2-\frac{1}{4^*}\|w_n\|^{4^*}_{4^*}\Big)\\
&\geq I(\bar{u})+\lim\limits_{n\to\infty}\Big(\frac{1}{2}\Big(\|\Delta w_n\|^2_2+\frac{1}{2}\|\nabla w_n\|^2_2\Big)-\frac{1}{4^*}\|w_n\|^{4^*}_{4^*}\Big)\\
&=I(\bar{u})+\Big(\frac{1}{2}-\frac{1}{4^*}\Big)l=I(\bar{u})+\frac{2}{N}l\geq I(\bar{u})+\frac{2}{N}S^\frac{N}{4},
\end{aligned}
$$
whence alternative $(i)$ in the thesis of this lemma follows. If instead $l=0$, we need to show that $u_n\to \bar{u}$ in $H^2_r(\mathbb R^N)$. From (\ref{eqn:Deltawn-wn4*-l}), we have known that
\begin{equation}\label{eqn:wn-un-2-4*}
\lim\limits_{n\to\infty}\Big(\|\Delta w_n\|^2_2+\frac{1}{2}\|\nabla w_n\|^2_2\Big)=\lim\limits_{n\to\infty}\|w_n\|^{4^*}_{4^*}=0.
\end{equation}
Hence, it remains to verify that $u_n\to \bar{u}$ in $L^2(\mathbb R^N)$. To this end, test (\ref{eqn:Jun-varphi}) and (\ref{eqn:equation-1}) with $w_n=u_n-\bar{u}$, it yields that
\begin{equation}\label{eqn:un-uc-lambda=Lp-4}
\begin{aligned}
\int_{\mathbb R^N}&|\Delta(u_n-\bar{u})|^2+|\nabla(u_n-\bar{u})|^2-(\lambda_n u_n-\bar{\lambda} \bar{u})(u_n-\bar{u})dx\\
&=\mu\int_{\mathbb R^N}(|u_n|^{p-2}u_n-|\bar{u}|^{p-2}\bar{u})(u_n-\bar{u})dx+\int_{\mathbb R^N}(|u_n|^{4^*-2}u_n-|\bar{u}|^{4^*-2}\bar{u})(u_n-\bar{u})dx+o_n(1).
\end{aligned}
\end{equation}
Observing that $u_n\to \bar{u}$ in $L^p(\mathbb R^N)$ and $u_n\to \bar{u}$ in $L^{4^*}(\mathbb R^N)$ from (\ref{eqn:un-uc-H2-Lp-RN}) and (\ref{eqn:wn-un-2-4*}) respectively, and using the H\"{o}lder's inequality and \cite[Lemma 1.20]{Zou-Schechter2006}, we derive that
\begin{equation}\label{eqn:un-p2-un-uc-0}
\left|\int_{\mathbb R^N}(|u_n|^{p-2}u_n-|\bar{u}|^{p-2}\bar{u})(u_n-\bar{u})dx\right|\leq\||u_n|^{p-2}u_n-|\bar{u}|^{p-2}\bar{u}\|_\frac{p}{p-1}\|u_n-\bar{u}\|_p
\to 0
\end{equation}
and
\begin{equation}\label{eqn:I-alpha-to-0}
\left|\int_{\mathbb R^N}(|u_n|^{4^*-2}u_n-|\bar{u}|^{4^*-2}\bar{u})(u_n-\bar{u})dx\right|\leq\||u_n|^{4^*-2}u_n-|\bar{u}|^{4^*-2}\bar{u}\|_\frac{4^*}{4^*-1}\|u_n-\bar{u}\|_{4^*}
\to 0.
\end{equation}
Therefore, from (\ref{eqn:un-uc-lambda=Lp-4})-(\ref{eqn:I-alpha-to-0}), it ensures that
$$
\begin{aligned}
0=\lim\limits_{n\to\infty}\int_{\mathbb R^N}(\lambda_n u_n-\bar{\lambda} \bar{u})(u_n-\bar{u})dx&=\lim\limits_{n\to\infty}\int_{\mathbb R^N}(\lambda_n u_n-\bar{\lambda} u_n+\bar{\lambda} u_n-\bar{\lambda} \bar{u})(u_n-\bar{u})dx\\
&=\bar{\lambda}\lim\limits_{n\to\infty}\|u_n-\bar{u}\|_2^2.
\end{aligned}
$$
At this point, this lemma is proved.
\end{proof}

Based on the above discussion, it is sufficient to estimate the mountain pass level $\gamma_{\mu}(c)=m_r(c)\in \Big(0,\frac{2}{N}S^\frac{N}{4}\Big)$ to obtain the existence of normalized solution of problem (\ref{eqn:BS-equation-L2-Super+Critical}). For this purpose, we firstly introduce two extremal functions frequently used in the sequel arguments.
\begin{lemma}(\!\!\cite[Page 439]{Alves-doO2002})\label{lem:U-bi-equation}
For any $\varepsilon>0$, define
$$
U_\varepsilon(x):=\frac{[N(N+2)(N-2)(N-4) \varepsilon^4]^\frac{N-4}{8}}{(\varepsilon^2+|x|^2)^\frac{N-4}{2}},\ \ \ x \in \mathbb{R}^N,
$$
then the minimizers of $S$ (defined in (\ref{eqn:Defn-S})) can be obtained by $U_\varepsilon(x)$. Moreover, $U_\varepsilon(x)$ solves the following critical equation
$$
\Delta^2 u=|u|^{4^*-2}u\ \ \mbox{in}\ \  \mathbb R^N.
$$
\end{lemma}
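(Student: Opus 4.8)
The plan is to establish the two assertions of the lemma separately: first that $U_\varepsilon$ genuinely solves $\Delta^2 u=|u|^{4^*-2}u$, and second that it realizes the infimum $S$ in (\ref{eqn:Defn-S}). Everything rests on the scaling invariance of both the equation and the Rayleigh quotient. A direct computation shows that if $u$ solves $\Delta^2 u=|u|^{4^*-2}u$, then so does $\lambda^{\frac{N-4}{2}}u(\lambda\,\cdot)$ for every $\lambda>0$ (the exponent $\frac{N-4}{2}$ being forced by matching the homogeneities, since $4^*-2=\frac{8}{N-4}$), and under the same rescaling, as well as under translations, the quotient $\|\Delta u\|_2^2/\|u\|_{4^*}^2$ is unchanged. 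Hence it suffices to treat the normalized profile $u(x)=(1+|x|^2)^{-\frac{N-4}{2}}$ and recover the whole family by the dilation $U_\varepsilon(x)=\varepsilon^{-\frac{N-4}{2}}\kappa\,u(x/\varepsilon)$, where $\kappa$ is the explicit constant isolated below.

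For the first assertion I would verify the PDE directly. Writing $w:=1+|x|^2$, $u=w^{-\alpha}$ with $\alpha=\frac{N-4}{2}$, and using $|\nabla w|^2=4(w-1)$ and $\Delta w=2N$, one obtains
\[
\Delta u=-2\alpha\big(N-2\alpha-2\big)w^{-\alpha-1}-4\alpha(\alpha+1)w^{-\alpha-2}.
\]
Since $N-2\alpha-2=2$ for our choice of $\alpha$, $\Delta u$ is a linear combination of $w^{-\alpha-1}$ and $w^{-\alpha-2}$; iterating the same differentiation rule on each term expresses $\Delta^2 u$ as a combination of $w^{-\alpha-2},w^{-\alpha-3},w^{-\alpha-4}$. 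The key algebraic point, which I would check term by term, is that with $\alpha=\frac{N-4}{2}$ the coefficients of $w^{-\alpha-2}$ and $w^{-\alpha-3}$ cancel, leaving $\Delta^2 u=c_N\,w^{-\alpha-4}=c_N\,u^{4^*-1}$ with $c_N=N(N+2)(N-2)(N-4)$ (note $\alpha+4=\frac{N+4}{2}$ is exactly the exponent of $u^{4^*-1}$). Substituting $U=\kappa\,u$ turns the equation into $\Delta^2U=c_N\kappa^{2-4^*}U^{4^*-1}$, so the normalization $\kappa$ is fixed by $c_N\kappa^{2-4^*}=1$, i.e. $\kappa=c_N^{\frac{N-4}{8}}=[N(N+2)(N-2)(N-4)]^{\frac{N-4}{8}}$, which is precisely the prefactor appearing in $U_\varepsilon$.

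For the second assertion—that $U_\varepsilon$ attains $S$—I would first recall that the best constant $S$ in (\ref{eqn:Defn-S}) is attained: applying the concentration-compactness principle to a minimizing sequence (the same principle used in Lemma \ref{Lem:local-ps-satisfies}), with translation and dilation invariance ruling out vanishing and dichotomy, yields a nonnegative radial minimizer $u_0$. Any such minimizer solves the Euler-Lagrange equation $\Delta^2u_0=S\|u_0\|_{4^*}^{2-4^*}|u_0|^{4^*-2}u_0$, which after the scaling normalization above becomes exactly $\Delta^2v=|v|^{4^*-2}v$. Testing this equation against $v$ itself gives $\|\Delta v\|_2^2=\|v\|_{4^*}^{4^*}$, so the quotient defining $S$ reduces to $\|v\|_{4^*}^{4^*-2}$; evaluating at $v=U_\varepsilon$ (whose $L^{4^*}$ and $\|\Delta\cdot\|_2$ norms are computable explicitly via the Beta function, and are independent of $\varepsilon$ by scaling) confirms that $U_\varepsilon$ produces the value $S$ and is therefore a minimizer.

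The genuinely hard input, and the step I expect to be the \emph{main obstacle}, is the identification that \emph{every} minimizer is, up to translation and scaling, of this explicit form—equivalently, the classification of finite-energy positive solutions of $\Delta^2v=v^{4^*-1}$ on $\mathbb R^N$. Unlike the second-order case, one cannot invoke P\'olya–Szeg\H{o} symmetrization directly for $\Delta^2$, and the classification is a deep result obtained by a moving-plane analysis of the conformally invariant fourth-order equation. For the present lemma, however, this classification is not strictly needed: it suffices to exhibit $U_\varepsilon$ as one solution (the direct computation above) and to verify through the explicit norm computation that its Rayleigh quotient equals $S$. I would therefore present the self-contained route—direct verification together with the energy identity $\|\Delta U_\varepsilon\|_2^2=\|U_\varepsilon\|_{4^*}^{4^*}$—and cite \cite{Alves-doO2002} for the full statement that the $U_\varepsilon$ exhaust all extremals.
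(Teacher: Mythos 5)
Your first assertion is verified correctly: the computation $\Delta(w^{-\beta})=-2\beta(N-2\beta-2)w^{-\beta-1}-4\beta(\beta+1)w^{-\beta-2}$ with $w=1+|x|^2$, the vanishing of the $w^{-\alpha-2}$ term and the cancellation of the $w^{-\alpha-3}$ terms for $\alpha=\frac{N-4}{2}$, the resulting identity $\Delta^2(w^{-\alpha})=N(N+2)(N-2)(N-4)\,w^{-\frac{N+4}{2}}$, and the normalization $\kappa=[N(N+2)(N-2)(N-4)]^{\frac{N-4}{8}}$ all check out, as does the dilation covariance. Note, for comparison, that the paper offers no proof at all of this lemma: it is quoted verbatim from \cite[Page 439]{Alves-doO2002}, so everything you wrote goes beyond what the paper does.

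However, your treatment of the second assertion (extremality of $U_\varepsilon$) has a genuine gap, and the claim that ``this classification is not strictly needed'' is exactly where it sits. Your proposed self-contained route is: (a) a minimizer $v$ of (\ref{eqn:Defn-S}) exists by concentration-compactness and, suitably rescaled, solves $\Delta^2 v=|v|^{4^*-2}v$; (b) $U_\varepsilon$ solves the same equation and satisfies $\|\Delta U_\varepsilon\|_2^2=\|U_\varepsilon\|_{4^*}^{4^*}$; (c) hence the Rayleigh quotient of $U_\varepsilon$ equals $\|U_\varepsilon\|_{4^*}^{4^*-2}$, which ``equals $S$.'' Step (c) does not follow. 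Two solutions of the same normalized Euler--Lagrange equation need not have the same $L^{4^*}$ norm: critical points of the quotient are not minimizers in general, and indeed for \emph{any} nontrivial solution $v$ of the normalized equation the energy identity combined with the definition of $S$ as an infimum gives only $\|v\|_{4^*}^{4^*}=\|\Delta v\|_2^2\geq S\|v\|_{4^*}^2$, i.e. $\|v\|_{4^*}^{4^*-2}\geq S$, with equality \emph{if and only if} $v$ is extremal --- which is the very thing to be proved. The ``explicit norm computation via the Beta function'' produces a number, but you have no independent evaluation of $S$ against which to compare it; the value of the sharp constant is precisely what is at stake. Closing this gap requires either the classification of positive finite-energy solutions of $\Delta^2 v=v^{\frac{N+4}{N-4}}$ (C.-S. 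Lin's moving-plane theorem for the conformally invariant fourth-order equation), so that the minimizer produced in (a) is forced to coincide with some $U_\varepsilon(\cdot-x_0)$, or Lieb's sharp Hardy--Littlewood--Sobolev computation in the dual formulation; recall that the second-order shortcut via rearrangement is unavailable here since $\|\Delta|u|\|_2$ is not controlled by $\|\Delta u\|_2$. Your closing sentence, which cites \cite{Alves-doO2002} for the statement that the $U_\varepsilon$ exhaust the extremals, is in fact the necessary move (and is what the paper itself does for the entire lemma); but it is needed already to establish that $U_\varepsilon$ is \emph{one} extremal, not merely to show that the family exhausts them.
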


\begin{lemma}\label{lem:bih-eatumate-energy}
Let $B_r(x)$ be the ball of radius of $r$ at $x$ and $\psi \in \mathcal{C}_0^{\infty}(\mathbb{R}^N)$ is a radial cut-off function satisfying:
\begin{itemize}
\item[(i)]  $0 \leq \psi(x) \leq 1$ for any $x\in\mathbb R^N$;
\item[(ii)] $\psi(x) \equiv 1$ in $B_1(0)$;
\item[(iii)]$\psi(x) \equiv 0$ in $\mathbb R^N\setminus B_2(0)$.
\end{itemize}
Define
\begin{equation}\label{eqn:u-varepsilon-define}
u_{\varepsilon}(x):=\psi(x) U_{\varepsilon}(x),
\end{equation}
then the following hold true:
\begin{equation}\label{eqn:delta-eatimate}
\int_{\mathbb{R}^N}|\Delta u_\varepsilon|^2 d x=S^\frac{N}{4}+O(\varepsilon^{N-4}),
\end{equation}
\begin{equation}\label{eqn:nabla-eatimate}
\int_{\mathbb{R}^N}|\nabla u_\varepsilon|^2 dx\leq
\begin{cases}
\frac{1}{2}S^\frac{N}{4}+O(\varepsilon^{4}), &\,\, \mbox{if}\ \  N>8;\\[0.25cm]
\frac{1}{2}S^2+\frac{1}{2}K \varepsilon^4|ln \varepsilon|+O(\varepsilon^4), &\,\, \mbox{if}\ \   N=8; \\[0.25cm]
\frac{1}{2}S^\frac{N}{4}+O(\varepsilon^{N-4}), & \,\,\mbox{if}\ \  5\leq N<8,
\end{cases}
\end{equation}
\begin{equation}\label{eqn:4*-eatimate}
\int_{\mathbb{R}^N}| u_\varepsilon|^{4^*} d x=S^\frac{N}{4}+O(\varepsilon^{N})
\end{equation}
and
\begin{equation}\label{eqn:p-estimate}
\int_{\mathbb{R}^N}|u_\varepsilon|^p dx=\begin{cases}K \varepsilon^{N-\frac{(N-4)p}{2}}+o\Big(\varepsilon^{N-\frac{(N-4)p}{2}}\Big), &\,\, \mbox{if}\ \  p > \frac{N}{N-4};\\[0.25cm]
 K \varepsilon^\frac{N}{2}|ln \varepsilon|+O\Big(\varepsilon^\frac{N}{2}\Big), &\,\, \mbox{if}\ \   p = \frac{N}{N-4}; \\[0.25cm]
K \varepsilon^\frac{(N-4)p}{2}+o\Big(\varepsilon^{\frac{(N-4)p}{2}}\Big), & \,\,\mbox{if}\ \  p < \frac{N}{N-4},
\end{cases}
\end{equation}
where $K$ is a positive constant and $S$ is given in (\ref{eqn:Defn-S}).
\end{lemma}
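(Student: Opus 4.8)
The plan is to substitute the explicit bubble $U_\varepsilon(x)=C_N\varepsilon^{(N-4)/2}(\varepsilon^2+|x|^2)^{-(N-4)/2}$, with $C_N=[N(N+2)(N-2)(N-4)]^{(N-4)/8}$, into each of the four integrals, separate off the contribution of the cut-off $\psi$, and then pass to the self-similar variable $y=x/\varepsilon$. This turns every quantity into an integral of the fixed profile $U_1(y)=C_N(1+|y|^2)^{-(N-4)/2}$ over either all of $\mathbb{R}^N$ or the dilated ball $B_{2/\varepsilon}(0)$. The size of the $\varepsilon$-powers and of any logarithmic corrections is then dictated, case by case, by whether the resulting rescaled integral converges at infinity, diverges logarithmically, or is dominated by the truncation boundary.

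First I would treat the two critical quantities $\int_{\mathbb{R}^N}|\Delta u_\varepsilon|^2$ and $\int_{\mathbb{R}^N}|u_\varepsilon|^{4^*}$. By Lemma \ref{lem:U-bi-equation} the untruncated bubble is an exact extremal, so $\int_{\mathbb{R}^N}|\Delta U_\varepsilon|^2=\int_{\mathbb{R}^N}|U_\varepsilon|^{4^*}=S^{N/4}$ by the definition of $S$ in (\ref{eqn:Defn-S}). The cut-off only alters these integrals on the annulus $B_2(0)\setminus B_1(0)$, where $U_\varepsilon$ and its derivatives are smooth and of size $O(\varepsilon^{(N-4)/2})$. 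Expanding $\Delta(\psi U_\varepsilon)=\psi\Delta U_\varepsilon+2\nabla\psi\cdot\nabla U_\varepsilon+U_\varepsilon\Delta\psi$ and bounding each cross term on that annulus yields the remainder $O(\varepsilon^{N-4})$ in (\ref{eqn:delta-eatimate}); applying the same annulus estimate to $\psi^{4^*}U_\varepsilon^{4^*}$ gives the remainder $O(\varepsilon^N)$ in (\ref{eqn:4*-eatimate}). These are the biharmonic analogues of the classical Brezis--Nirenberg computations.

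Next I would handle the subcritical term. After rescaling, $\int_{\mathbb{R}^N}|u_\varepsilon|^p$ reduces, up to a smooth and negligible tail from the annulus, to $C_N^p\,\varepsilon^{N-p(N-4)/2}\int_{B_{2/\varepsilon}(0)}(1+|y|^2)^{-p(N-4)/2}dy$. Since the integrand decays like $|y|^{-p(N-4)}$, the dilated integral converges, diverges logarithmically, or is boundary-dominated precisely according to $p(N-4)>N$, $p(N-4)=N$, or $p(N-4)<N$, that is, according to $p>N/(N-4)$, $p=N/(N-4)$, or $p<N/(N-4)$. Reading off the resulting power of $\varepsilon$ in each case produces the three branches of (\ref{eqn:p-estimate}), with the logarithm appearing exactly at the borderline $p=N/(N-4)$.

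The main obstacle is the gradient estimate (\ref{eqn:nabla-eatimate}), the term generated by the mixed-dispersion operator $-\Delta$; it has no counterpart in the pure biharmonic problem and its asymptotics are the most delicate to pin down. I would expand $\nabla u_\varepsilon=\psi\nabla U_\varepsilon+U_\varepsilon\nabla\psi$, control the $\nabla\psi$ contribution on the annulus, and analyze the leading piece $\int_{\mathbb{R}^N}\psi^2|\nabla U_\varepsilon|^2$ by rescaling, carefully separating the inner region $|x|\le\varepsilon$, the intermediate region $\varepsilon\le|x|\le 1$, and the cut-off annulus. The three-way split at $N>8$, $N=8$, and $5\le N<8$ mirrors the convergence threshold of the associated dilated integral, with the borderline dimension $N=8$ forcing the logarithmic factor $\varepsilon^4|\ln\varepsilon|$; the remainders $O(\varepsilon^4)$, $O(\varepsilon^4|\ln\varepsilon|)$ and $O(\varepsilon^{N-4})$ are then read off from the truncation boundary exactly as in the subcritical computation. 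Keeping this last estimate sharp in the borderline dimension, so that the logarithm is neither lost nor spuriously introduced, is the step I expect to require the most care; everything else is routine bookkeeping once the rescaling is set up.
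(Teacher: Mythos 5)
Your treatment of (\ref{eqn:delta-eatimate}), (\ref{eqn:4*-eatimate}) and (\ref{eqn:p-estimate}) is essentially the paper's own argument: expand $\Delta(\psi U_\varepsilon)$, estimate the cross terms on the annulus $B_2(0)\setminus B_1(0)$, where $U_\varepsilon$ and its derivatives are of size $O(\varepsilon^{(N-4)/2})$, and rescale the main radial integrals, with the three branches of (\ref{eqn:p-estimate}) governed by whether $p(N-4)$ exceeds, equals, or falls below $N$. No issues there.

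The gradient estimate (\ref{eqn:nabla-eatimate}) is where you depart from the paper, and where your plan as written contains a genuine misdiagnosis. The paper never computes $\int\psi^2|\nabla U_\varepsilon|^2\,dx$: it invokes the interpolation inequality $\|\nabla u\|_2^2\leq\|\Delta u\|_2\|u\|_2\leq\frac12\bigl(\|\Delta u\|_2^2+\|u\|_2^2\bigr)$ from \cite[(2.3)]{Bonheure-Casteras-Gou-Jeanjean2019} and then inserts (\ref{eqn:delta-eatimate}) together with the $p=2$ case of (\ref{eqn:p-estimate}). That is precisely why (\ref{eqn:nabla-eatimate}) is stated as an inequality, why its leading constant is $\frac12 S^{N/4}$, and why its borderline dimension is $N=8$: the threshold $p=N/(N-4)$ evaluated at $p=2$ gives $N=8$, so the logarithm is inherited from $\|u_\varepsilon\|_2^2$, not from the gradient. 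If instead you rescale the gradient integral directly, as you propose, you get $\int_{\mathbb{R}^N}\psi^2|\nabla U_\varepsilon|^2\,dx=C\varepsilon^2\int_0^{2/\varepsilon}s^{N+1}(1+s^2)^{-(N-2)}\,ds$ plus $O(\varepsilon^{N-4})$ annulus terms; since the integrand decays like $s^{5-N}$, the dilated integral's borderline sits at $N=6$, and the direct computation yields $O(\varepsilon)$ for $N=5$, $O(\varepsilon^2|\ln\varepsilon|)$ for $N=6$, and $O(\varepsilon^2)$ for $N\geq7$. You will therefore never ``read off'' the claimed $N>8$ / $N=8$ / $5\leq N<8$ trichotomy, nor the constant $\frac12 S^{N/4}$, from your rescaled gradient integral — that structure simply does not live there. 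Your route is not fatal: the direct bounds are $o(1)$, hence strictly stronger than (\ref{eqn:nabla-eatimate}), which they imply trivially for small $\varepsilon$. But to present a proof of the lemma as stated you should either say this explicitly (prove the stronger $o(1)$ bound and note it implies the claim) or switch to the paper's one-line interpolation argument; trying to force the $N=8$ logarithm out of the gradient integral itself would leave you stuck.
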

\begin{proof}
Some direct calculations state that
\begin{equation}\label{eqn:Delta-u-varepsilon}
\Delta u_{\varepsilon}=\Delta U_{\varepsilon}\psi+U_{\varepsilon}\Delta\psi+2\nabla U_{\varepsilon}\cdot \nabla\psi,
\end{equation}
\begin{equation}\label{eqn:U-varepsilon-AN}
\nabla U_{\varepsilon}=D_N\varepsilon^{\frac{N-4}{2}}(4-N)\frac{x}{(\varepsilon^2+|x|^2)^{\frac{N-2}{2}}},\
\end{equation}
and
\begin{equation}\label{eqn:Delta-U-varepsilon-AN}
\Delta U_{\varepsilon}=D_N\varepsilon^{\frac{N-4}{2}}(4-N)\left[\frac{1}{(\varepsilon^2+|x|^2)^{\frac{N-2}{2}}}+(2-N)\frac{|x|^2}{(\varepsilon^2+|x|^2)^{\frac{N}{2}}}  \right],
\end{equation}
where
$$
D_N:=[N(N+2)(N-2)(N-4)]^\frac{N-4}{8}.
$$
Then, we deduce that
\begin{equation}\label{eqn:Delta-U-varepsilon-2-2}
\begin{aligned}
\|\Delta U_{\varepsilon}\|^2_2&=\int_{\mathbb{R}^N}|\Delta U_{\varepsilon}|^2dx\\
&=D_N^2\varepsilon^{N-4}(4-N)^2\int_{\mathbb{R}^N}\frac{1}{(\varepsilon^2+|x|^2)^{N-2}}dx\\
&\ \ \  +D_N^2\varepsilon^{N-4}(4-N)^2(2-N)^2\int_{\mathbb{R}^N} \frac{|x|^4}{(\varepsilon^2+|x|^2)^{N}}dx\\
&\ \ \  +2D_N^2\varepsilon^{N-4}(4-N)^2(2-N)\int_{\mathbb{R}^N}\frac{|x|^2}{(\varepsilon^2+|x|^2)^{N-1}} dx   \\
&=D_N^2\varepsilon^{N-4}(4-N)^2\omega\int^{\infty}_0\frac{r^{N-1}}{(\varepsilon^2+r^2)^{N-2}}dr\\
&\ \ \ +D_N^2\varepsilon^{N-4}(4-N)^2(2-N)^2\omega\int^{\infty}_0 \frac{r^{N+3}}{(\varepsilon^2+r^2)^{N}}dr\\
&\ \ \  -2D_N^2\varepsilon^{N-4}(4-N)^2(N-2)\omega\int^{\infty}_0\frac{r^{N+1}}{(\varepsilon^2+r^2)^{N-1}} dr   \\
&=D_N^2\varepsilon^{N-4}(4-N)^2\frac{1}{\varepsilon^{2N-4}}\varepsilon^{N-1}\varepsilon\omega\int^{\infty}_0\frac{r^{N-1}}{(1+r^2)^{N-2}}dr\\
&\ \ \ +D_N^2\varepsilon^{N-4}(4-N)^2\frac{1}{\varepsilon^{2N}}\varepsilon^{N+3}\varepsilon(2-N)^2\omega\int^{\infty}_0\frac{r^{N+3}}{(1+r^2)^N}dr\\
&\ \ \  -2D_N^2\varepsilon^{N-4}(4-N)^2(N-2)\varepsilon^{N+1}\frac{1}{\varepsilon^{2N-2}}\varepsilon\omega\int^{\infty}_0\frac{r^{N+1}}{(1+r^2)^{N-1}}dr\\
&=D_N^2(4-N)^2\omega\int^{\infty}_0\frac{r^{N-1}}{(1+r^2)^{N-2}}+(2-N)^2\frac{r^{N+3}}{(1+r^2)^N}-2(N-2)\frac{r^{N+1}}{(1+r^2)^{N-1}}dr
\end{aligned}
\end{equation}
and
\begin{equation}\label{eqn:Delta-u-varepsilon-2-2}
\begin{aligned}
\|\Delta u_{\varepsilon}\|^2_2&=\int_{\mathbb{R}^N}\Big|\Delta U_{\varepsilon}\psi+U_{\varepsilon}\Delta\psi+2\nabla U_{\varepsilon}\cdot \nabla\psi \Big|^2dx\\
&=\int_{\mathbb{R}^N}|\Delta U_{\varepsilon}\psi|^2+\Delta U_{\varepsilon}\psi U_{\varepsilon}\Delta \psi+2\Delta U_{\varepsilon}\psi \nabla U_{\varepsilon}\cdot\nabla\psi dx\\
&\ \ \ +\int_{\mathbb{R}^N}U_{\varepsilon}\Delta \psi \Delta U_{\varepsilon}\psi+|U_{\varepsilon}\Delta \psi|^2+2U_{\varepsilon}\Delta\psi\nabla U_{\varepsilon}\cdot\nabla\phi dx\\
&\ \ \ +\int_{\mathbb{R}^N}2\nabla U_{\varepsilon}\cdot\nabla\psi\Delta U_{\varepsilon}\psi+2\nabla U_{\varepsilon}\cdot\nabla\psi U_{\varepsilon}\Delta \psi+4|\nabla U_{\varepsilon}\cdot\nabla\psi|^2dx\\
&=2A_1(\varepsilon)+A_2(\varepsilon)+A_3(\varepsilon)+A_4(\varepsilon),
\end{aligned}
\end{equation}
where $\omega$ is the area of the unit sphere in $\mathbb{R}^N$,
$$
A_1(\varepsilon):=\int_{\mathbb{R}^N}\Delta U_{\varepsilon}\psi U_{\varepsilon}\Delta \psi+2\Delta U_{\varepsilon}\psi  \nabla U_{\varepsilon}\cdot\nabla\psi+2U_{\varepsilon}\Delta\psi \nabla U_{\varepsilon}\cdot\nabla\psi dx,
$$
$$
A_2(\varepsilon):=\int_{\mathbb{R}^N}|\Delta U_{\varepsilon}\psi|^2dx,
$$
$$
A_3(\varepsilon):=\int_{\mathbb{R}^N}|U_\varepsilon\Delta\psi|^2dx
$$
and
$$
A_4(\varepsilon):=4\int_{\mathbb{R}^N}|\nabla U_{\varepsilon}\cdot\nabla \psi|^2dx.
$$

Observe the existence of $\widetilde{K}$ such that $|\nabla\psi|\le \widetilde{K}$ and $|\Delta \psi|\le \widetilde{K}$, it immediately brings that
\begin{equation}\label{eqn:A1-1}
\begin{aligned}
&\int_{\mathbb{R}^N}|\Delta U_{\varepsilon}\psi U_{\varepsilon}\Delta \psi|dx\\
&=\int_{B_1(0)}|\Delta U_{\varepsilon}\psi U_{\varepsilon}\Delta \psi|dx+\int_{B_2(0)\backslash B_1(0)}\!\!\!\!\!\!\!\!\!\!|\Delta U_{\varepsilon}\psi U_{\varepsilon}\Delta \psi|dx+\int_{\mathbb{R}^N\backslash B_2(0)}\!\!\!\!\!\!\!\!\!\!|\Delta U_{\varepsilon}\psi U_{\varepsilon}\Delta \psi|dx\\
&=\int_{B_2(0)\backslash B_1(0)}|\Delta U_{\varepsilon}\psi U_{\varepsilon}\Delta \psi|dx\\
&=\int_{B_2(0)\backslash B_1(0)}D_N^2\varepsilon^{N-4}(N-4)\left|\frac{\psi}{(\varepsilon^2+|x|^2)^{\frac{N-2}{2}}}+(2-N)\frac{|x|^2\psi}{(\varepsilon^2+|x|^2)^{\frac{N}{2}}}  \right|\frac{|\Delta \psi|}{(\varepsilon^2+|x|^2)^{\frac{N-4}{2}}} dx\\
&\leq D_N^2\varepsilon^{N-4}(N-4)\widetilde{K}^2\int_{B_2(0)\backslash B_1(0)}\frac{dx}{(\varepsilon^2+|x|^2)^{N-3}}\\
&\ \ \ +D_N^2\varepsilon^{N-4}(N-4)(N-2)\widetilde{K}\int_{B_2(0)\backslash B_1(0)}\frac{|x|^2}{(\varepsilon^2+|x|^2)^{N-2}}dx\\
&= D_N^2\varepsilon^{N-4}(N-4)\omega\widetilde{K}\int_1^2\frac{r^{N-1}}{(\varepsilon^2+r^2)^{N-3}}dr
+D_N^2\varepsilon^{N-4}(N-4)(N-2)\omega\widetilde{K}^2\int_1^2\frac{r^{N+1}}{(\varepsilon^2+r^2)^{N-2}}dr\\
&\leq C \varepsilon^{N-4},
\end{aligned}
\end{equation}
\begin{equation}\label{eqn:A1-2}
\begin{aligned}
&\int_{\mathbb{R}^N}|2\Delta U_{\varepsilon}\psi\nabla U_{\varepsilon}\cdot\nabla\psi|dx\\
&=\int_{B_2(0)\backslash B_1(0)}|2\Delta U_{\varepsilon}\psi\nabla U_{\varepsilon}\cdot\nabla\psi|dx\\
&=\int_{B_2(0)\backslash B_1(0)}\left|2D_N^2\varepsilon^{N-4}(4-N)^2\Big(\frac{\psi}{(\varepsilon^2+|x|^2)^{\frac{N-2}{2}}}+(N-2)\frac{|x|^2\psi}{(\varepsilon^2+|x|^2)^{\frac{N}{2}}}  \Big)\frac{x\cdot\nabla\psi}{(\varepsilon^2+|x|^2)^{\frac{N-2}{2}}}\right|dx\\
&\leq2D_N^2\varepsilon^{N-4}(4-N)^2\widetilde{K}\int_{B_2(0)\backslash B_1(0)}\frac{|x|}{(\varepsilon^2+|x|^2)^{N-2}}dx\\
&\ \ \ +2D_N^2\varepsilon^{N-4}(4-N)^2(N-2)\widetilde{K}\int_{B_2(0)\backslash B_1(0)}\frac{|x|^3}{(\varepsilon^2+|x|^2)^{N-1}}dx\\
&=2D_N^2\varepsilon^{N-4}(4-N)^2\omega\widetilde{K}\int_1^2\frac{r^{N}}{(\varepsilon^2+r^2)^{N-2}}dr +2D_N^2\varepsilon^{N-4}(4-N)^2(N-2)\omega\widetilde{K}\int_1^2\frac{r^{N+2}}{(\varepsilon^2+r^2)^{N-1}}dr\\
&\leq C\varepsilon^{N-4}
\end{aligned}
\end{equation}
and
\begin{equation}\label{eqn:A1-3}
\begin{aligned}
&\int_{\mathbb{R}^N}|2U_{\varepsilon}\Delta\psi \nabla U_{\varepsilon}\cdot\nabla\psi|dx\\
&=\int_{B_1(0)}|2U_{\varepsilon}\Delta\psi \nabla U_{\varepsilon}\cdot\nabla\psi|dx+\int_{B_2(0)\backslash B_1(0)}|2U_{\varepsilon}\Delta\psi \nabla U_{\varepsilon}\cdot\nabla\psi|dx\\
&\ \ \ +\int_{\mathbb{R}^N\backslash B_2(0)}|2U_{\varepsilon}\Delta\psi \nabla U_{\varepsilon}\cdot\nabla\psi|dx\\
&=\int_{B_2(0)\backslash B_1(0)}|2U_{\varepsilon}\Delta\psi \nabla U_{\varepsilon}\cdot\nabla\psi|dx\\
&=\int_{B_2(0)\backslash B_1(0)}\left|2D_N^2(4-N)\varepsilon^{N-4}\frac{\Delta \psi x\cdot\nabla\psi}{(\varepsilon^2+|x|^2)^{\frac{N-4}{2}+\frac{N-2}{2}}}\right|dx\\
&\leq2D_N^2(N-4)\varepsilon^{N-4}\widetilde{K}^2\int_{B_2(0)\backslash B_1(0)}\frac{ |x|}{(\varepsilon^2+|x|^2)^{\frac{N-4}{2}+\frac{N-2}{2}}}dx\\
&= 2D_N^2(N-4)\varepsilon^{N-4}\omega\int_1^2\frac{r^N}{(\varepsilon^2+r^2)^{N-3}}dr\\
&\leq C\varepsilon^{N-4}.
\end{aligned}
\end{equation}
Thus, (\ref{eqn:A1-1})-(\ref{eqn:A1-3}) imply that
\begin{equation}\label{eqn:A1-estimate}
A_1(\varepsilon)=O(\varepsilon^{N-4}).
\end{equation}

In order to estimate $A_2(\varepsilon)$, we need the subsequent facts:
\begin{equation}\label{eqn:A2-01}
\begin{aligned}
\varepsilon^{N-4}\int_{\mathbb{R}^N}\frac{\psi^2}{(\varepsilon^2+|x|^2)^{N-2}}dx
&=\varepsilon^{N-4}\int_{B_2(0)}\frac{\psi^2}{(\varepsilon^2+|x|^2)^{N-2}}dx\\
&=\varepsilon^{N-4}\omega\int_0^2\frac{r^{N-1}\psi^2}{(\varepsilon^2+r^2)^{N-2}}dr\\
&=\varepsilon^{N-4}\omega\int_0^2\Big(\frac{r^{N-1}(\psi^2-1)}{(\varepsilon^2+r^2)^{N-2}}+\frac{r^{N-1}}{(\varepsilon^2+r^2)^{N-2}}\Big)dr\\
&=\varepsilon^{N-4}\omega\int_0^1\frac{r^{N-1}(\psi^2-1)}{(\varepsilon^2+r^2)^{N-2}}dr+\varepsilon^{N-4}\omega
\int_1^2\frac{r^{N-1}(\psi^2-1)}{(\varepsilon^2+r^2)^{N-2}}dr\\
&\ \ \ +\varepsilon^{N-4}\omega\int_0^2\frac{r^{N-1}}{(\varepsilon^2+r^2)^{N-2}}dr\\
&=\varepsilon^{N-4}\omega\int_0^2\frac{r^{N-1}}{(\varepsilon^2+r^2)^{N-2}}dr+O(\varepsilon^{N-4})\\
&=\omega\int_0^{\frac{2}{\varepsilon}}\frac{r^{N-1}}{(1+r^2)^{N-2}}dr+O(\varepsilon^{N-4}),
\end{aligned}
\end{equation}
\begin{equation}\label{eqn:A2-02}
\begin{aligned}
\varepsilon^{N-4} \int_{\mathbb{R}^N}\frac{|x|^4\psi^2}{(\varepsilon^2+|x|^2)^{N}}dx
&=\varepsilon^{N-4} \int_{B_2(0)}\frac{|x|^4\psi^2}{(\varepsilon^2+|x|^2)^{N}}dx\\
&=\varepsilon^{N-4}\omega\int_0^2\frac{r^{N+3}\psi^2}{(\varepsilon^2+r^2)^{N}}dr\\
&=\varepsilon^{N-4}\omega\int_0^2\Big(\frac{r^{N+3}(\psi^2-1)}{(\varepsilon^2+r^2)^{N}}+\frac{r^{N+3}}{(\varepsilon^2+r^2)^{N}} \Big)dr\\
&=\varepsilon^{N-4}\omega\int_0^1\frac{r^{N+3}(\psi^2-1)}{(\varepsilon^2+r^2)^{N}}dr+\varepsilon^{N-4}\omega\int_1^2\frac{r^{N+3}(\psi^2-1)}{(\varepsilon^2+r^2)^{N}}dr\\
&\ \ \ +\varepsilon^{N-4}\omega\int_0^2\frac{r^{N+3}}{(\varepsilon^2+r^2)^{N}} dr\\
&=\omega\int_0^{\frac{2}{\varepsilon}}\frac{r^{N+3}}{(1+r^2)^{N}}dr+O(\varepsilon^{N-4})
\end{aligned}
\end{equation}
and
\begin{equation}\label{eqn:A2-03}
\begin{aligned}
\varepsilon^{N-4}\int_{\mathbb{R}^N}\frac{|x|^2\psi^2}{(\varepsilon^2+|x|^2)^{N-1}}dx
&=\varepsilon^{N-4}\omega\int_0^2\frac{r^{N+1}\psi^2}{(\varepsilon^2+r^2)^{N-1}}dr\\
&=\varepsilon^{N-4}\omega\int_0^2\Big(\frac{r^{N+1}(\psi^2-1)}{(\varepsilon^2+r^2)^{N-1}}+\frac{r^{N+1}}{(\varepsilon^2+r^2)^{N-1}} \Big)dr\\
&=\varepsilon^{N-4}\omega\int_0^1\frac{r^{N+1}(\psi^2-1)}{(\varepsilon^2+r^2)^{N-1}}dr+\varepsilon^{N-4}\omega\int_1^2\frac{r^{N+1}(\psi^2-1)}{(\varepsilon^2+r^2)^{N-1}}dr\\
&\ \ \ +\varepsilon^{N-4}\omega\int_0^2\frac{r^{N+1}}{(\varepsilon^2+r^2)^{N-1}}dr\\
&=\omega\int_0^{\frac{2}{\varepsilon}}\frac{r^{N+1}}{(1+r^2)^{N-1}}dr+O(\varepsilon^{N-4}).
\end{aligned}
\end{equation}
Based on (\ref{eqn:A2-01})-(\ref{eqn:A2-03}), there holds that
\begin{equation}\label{eqn:A2-estimate}
\begin{aligned}
A_2(\varepsilon)&=\int_{\mathbb{R}^N}|\Delta U_{\varepsilon}\psi|^2dx\\
&=\int_{\mathbb{R}^N}D_N^2\varepsilon^{N-4}(4-N)^2\left(\frac{\psi}{(\varepsilon^2+|x|^2)^{\frac{N-2}{2}}}+(2-N)\frac{|x|^2\psi}{(\varepsilon^2+|x|^2\psi)^{\frac{N}{2}}}  \right)^2dx\\
&=D_N^2\varepsilon^{N-4}(4-N)^2\!\!\int_{\mathbb{R}^N}\!\!\frac{\psi^2}{(\varepsilon^2+|x|^2)^{N-2}}dx\!+\!D_N^2\varepsilon^{N-4}(4-N)^2(2-N)^2\!\!\int_{\mathbb{R}^N}
\!\!\frac{|x|^4\psi^2}{(\varepsilon^2+|x|^2)^{N}}dx\\
&\ \ \ -2D_N^2\varepsilon^{N-4}(4-N)^2(N-2)\int_{\mathbb{R}^N}\frac{|x|^2|\psi|^2}{(\varepsilon^2+|x|^2)^{N-1}}dx\\
&=D_N^2(4-N)^2\omega\int_0^{\frac{2}{\varepsilon}}\frac{r^{N-1}}{(1+r^2)^{N-2}}dr
+D_N^2(4-N)^2(2-N)^2\omega\int_0^{\frac{2}{\varepsilon}}\frac{r^{N+3}}{(1+r^2)^{N}}dr\\
&\ \ \ -2D_N^2(4-N)^2(N-2)\omega\int_0^{\frac{2}{\varepsilon}}\frac{r^{N+1}}{(1+r^2)^{N-1}}dr+O(\varepsilon^{N-4}).
\end{aligned}
\end{equation}

With regard to $A_3(\varepsilon)$ and $A_4(\varepsilon)$, obviously one has
\begin{equation}\label{eqn:A3-estimate}
\begin{aligned}
A_3(\varepsilon)=\int_{\mathbb{R}^N}|U_\varepsilon\Delta\psi|^2dx
&=\int_{B_2(0)\backslash B_1(0)}|U_\varepsilon\Delta\psi|^2dx\\
&=\int_{B_2(0)\backslash B_1(0)}D_N^2\varepsilon^{N-4}\frac{|\Delta\psi|^2}{(\varepsilon^2+|x|^2)^{N-4}}dx\\
&\leq D_N^2\varepsilon^{N-4}\omega\widetilde{K}^2\int_1^2\frac{r^{N-1}}{(\varepsilon^2+r^2)^{N-4}}dr\\
&\leq O(\varepsilon^{N-4})
\end{aligned}
\end{equation}
and
\begin{equation}\label{eqn:A4-estimate}
\begin{aligned}
A_4(\varepsilon)=4\int_{\mathbb{R}^N}|\nabla U_{\varepsilon}\cdot\nabla \psi|^2dx
&=4D_N^2\varepsilon^{N-4}(4-N)^2\int_{\mathbb{R}^N}\frac{|x\cdot\nabla\psi|^2}{(\varepsilon^2+|x|^2)^{N-2}}dx\\
&\leq4D_N^2\varepsilon^{N-4}(4-N)^2\int_{\mathbb{R}^N}\frac{|x|^2|\nabla\psi|^2}{(\varepsilon^2+|x|^2)^{N-2}}dx\\
&= 4D_N^2\varepsilon^{N-4}(4-N)^2\omega\widetilde{K}^2\int_1^2\frac{r^{N+1}}{(\varepsilon^2+r^2)^{N-2}}dr\\
&\leq O(\varepsilon^{N-4}).
\end{aligned}
\end{equation}

Together with (\ref{eqn:Delta-U-varepsilon-2-2})-(\ref{eqn:Delta-u-varepsilon-2-2}), (\ref{eqn:A1-estimate}), (\ref{eqn:A2-estimate})-(\ref{eqn:A4-estimate}) and the following observations
$$
\int_{\frac{2}{\varepsilon}}^{\infty}\frac{r^{N-1}}{(1+r^2)^{N-2}}dr\le \int_{\frac{2}{\varepsilon}}^{\infty}r^{3-N}dr=O(\varepsilon^{N-4}),
$$
$$
\int_{\frac{2}{\varepsilon}}^{\infty}\frac{r^{N+3}}{(1+r^2)^{N}}dr\le \int_{\frac{2}{\varepsilon}}^{\infty}r^{3-N}dr=O(\varepsilon^{N-4}),
$$
$$
\int_{\frac{2}{\varepsilon}}^{\infty}\frac{r^{N+1}}{(1+r^2)^{N-1}}dr\le \int_{\frac{2}{\varepsilon}}^{\infty}r^{3-N}dr=O(\varepsilon^{N-4}),
$$
we infer that
$$
\|\Delta u_{\varepsilon}\|^2_2=\|\Delta U_{\varepsilon}\|^2_2+O(\varepsilon^{N-4}).
$$
In addition, from Lemma \ref{lem:U-bi-equation}, we know that $S\|U_\varepsilon\|^2_{4^*}=\|\Delta U_\varepsilon\|^2_2$ and
$\|\Delta U_\varepsilon\|^2_2=\|U_\varepsilon\|^{4^*}_{4^*}$. As a result, we conclude that
\begin{equation}\label{eqn:Delta-u-Delta-u-O-S}
\|\Delta u_{\varepsilon}\|^2_2=\|\Delta U_{\varepsilon}\|^2_2+O(\varepsilon^{N-4})=S^{\frac{N}{4}}+O(\varepsilon^{N-4}).
\end{equation}

Next, we turn to (\ref{eqn:4*-eatimate}). From the expression of $u_\varepsilon$, it gives that
$$
\begin{aligned}
\|u_{\varepsilon}\|^{4^*}_{4^*}&=\int_{\mathbb{R}^N}|\psi U_{\varepsilon}|^{4^*}dx\\
&=D_N^{4^*}\varepsilon^N\int_{\mathbb{R}^N}\frac{\psi^{4^*}}{(\varepsilon^2+|x|^2)^N}dx\\
&=D_N^{4^*}\varepsilon^N\int_{B_2(0)}\frac{\psi^{4^*}}{(\varepsilon^2+|x|^2)^N}dx+D_N^{4^*}\varepsilon^N\int_{\mathbb{R}^N\backslash B_2(0)}\frac{\psi^{4^*}}{(\varepsilon^2+|x|^2)^N}dx\\
&=D_N^{4^*}\varepsilon^N\int_{B_2(0)}\frac{\psi^{4^*}}{(\varepsilon^2+|x|^2)^N}dx\\
&=D_N^{4^*}\varepsilon^N\omega\int_0^2\frac{r^{N-1}\psi^{4^*}}{(\varepsilon^2+r^2)^{N}}dr\\
&=D_N^{4^*}\varepsilon^N\omega\int_0^2\frac{r^{N-1}}{(\varepsilon^2+r^2)^{N}}dr+D_N^{4^*}\varepsilon^N\omega\int_0^2\frac{r^{N-1}(\psi^{4^*}-1)}{(\varepsilon^2+r^2)^{N}}dr\\
&=D_N^{4^*}\varepsilon^N\omega\int_0^2\frac{r^{N-1}}{(\varepsilon^2+r^2)^{N}}dr+D_N^{4^*}\varepsilon^N\omega\int_1^2\frac{r^{N-1}(\psi^{4^*}-1)}{(\varepsilon^2+r^2)^{N}}dr\\
&=D_N^{4^*}\varepsilon^N\omega\int_0^2\frac{r^{N-1}}{(\varepsilon^2+r^2)^{N}}dr+O(\varepsilon^N)\\
&=D_N^{4^*}\omega\int_0^{\frac{2}{\varepsilon}}\frac{r^{N-1}}{(1+r^2)^N}dr+O(\varepsilon^N).
\end{aligned}
$$
Meanwhile, one also has
$$
\|U_{\varepsilon}\|^{4^*}_{4^*}=D_N^{4^*}\omega\int_0^{\infty}\frac{r^{N-1}}{(1+r^2)^N}dr,
$$
$$
\int_{\frac{2}{\varepsilon}}^{\infty}\frac{r^{N-1}}{(1+r^2)^N}dr\le \int_{\frac{2}{\varepsilon}}^{\infty}r^{-1-N}dr=O(\varepsilon^N)
$$
and
$$
\|\Delta U_\varepsilon\|^2_2=\|U_\varepsilon\|^{4^*}_{4^*}=S^\frac{N}{4}.
$$
Thereby, (\ref{eqn:4*-eatimate}) follows from the above facts evidently.

Finally, we discuss the estimate on $\|u_{\varepsilon}\|^p_p$. Noting that
\begin{equation}\label{eqn:u-p-p-direct-cal-G1-G2}
\begin{aligned}
\|u_{\varepsilon}\|^p_p&=\int_{\mathbb{R}^N}|\psi U_\varepsilon|^pdx\\
&=\int_{\mathbb{R}^N}D_N^p\varepsilon^{\frac{(N-4)p}{2}}\frac{\psi^p}{(\varepsilon^2+|x|^2)^\frac{(N-4)p}{2}}dx\\
&=\int_{B_2(0)}D_N^p\varepsilon^{\frac{(N-4)p}{2}}\frac{\psi^p}{(\varepsilon^2+|x|^2)^\frac{(N-4)p}{2}}dx
+\int_{\mathbb{R}^N\backslash B_2(0)}D_N^p\varepsilon^{\frac{(N-4)p}{2}}\frac{\psi^p}{(\varepsilon^2+|x|^2)^\frac{(N-4)p}{2}}dx\\
&=D_N^p\varepsilon^{\frac{(N-4)p}{2}}\omega\int_0^2\frac{r^{N-1}\psi^p}{(\varepsilon^2+r^2)^{\frac{(N-4)p}{2}}}dr\\
&=D_N^p\varepsilon^{\frac{(N-4)p}{2}}\omega\int_0^2\frac{r^{N-1}}{(\varepsilon^2+r^2)^{\frac{(N-4)p}{2}}}dr
+D_N^p\varepsilon^{\frac{(N-4)p}{2}}\omega\int_0^2\frac{r^{N-1}(\psi^p-1)}{(\varepsilon^2+r^2)^{\frac{(N-4)p}{2}}}dr\\
&=D_N^p\varepsilon^{\frac{(N-4)p}{2}}\omega\int_0^2\frac{r^{N-1}}{(\varepsilon^2+r^2)^{\frac{(N-4)p}{2}}}dr
+D_N^p\varepsilon^{\frac{(N-4)p}{2}}\omega\int_1^2\frac{r^{N-1}(\psi^p-1)}{(\varepsilon^2+r^2)^{\frac{(N-4)p}{2}}}dr\\
&=D_N^p\varepsilon^{N-\frac{(N-4)p}{2}}\omega\int_0^{\frac{2}{\varepsilon}}\frac{r^{N-1}}{(1+r^2)^{\frac{(N-4)p}{2}}}dr+O(\varepsilon^{\frac{(N-4)p}{2}}),
\end{aligned}
\end{equation}
in what follows, we distinguish it into three cases: $p>\frac{N}{N-4}$, $p=\frac{N}{N-4}$ and $p<\frac{N}{N-4}$.

$(i)$ If $p>\frac{N}{N-4}$, it ensures that
$$
\begin{aligned}
&D_N^p\varepsilon^{N-\frac{(N-4)p}{2}}\omega\int_0^{\frac{2}{\varepsilon}}\frac{r^{N-1}}{(1+r^2)^{\frac{(N-4)p}{2}}}dr\\
&=D_N^p\varepsilon^{N-\frac{(N-4)p}{2}}\omega\int_0^{\infty}\frac{r^{N-1}}{(1+r^2)^{\frac{(N-4)p}{2}}}dr-D_N^p\varepsilon^{N-\frac{(N-4)p}{2}}\omega\int_{\frac{2}{\varepsilon}}^{\infty}\frac{r^{N-1}}{(1+r^2)^{\frac{(N-4)p}{2}}}dr\\
&=:K \varepsilon^{N-\frac{(N-4)p}{2}}+o(\varepsilon^{N-\frac{(N-2)p}{2}}).
\end{aligned}
$$
Combining with the fact $\lim\limits_{\varepsilon\rightarrow 0}\frac{\varepsilon^{\frac{(N-4)p}{2}}}{\varepsilon^{N-\frac{(N-4)p}{2}}}=0$, we have
\begin{equation}\label{eqn:0.19}
\|u_{\varepsilon}\|^p_p=K\varepsilon^{N-\frac{(N-4)p}{2}}+o(\varepsilon^{N-\frac{(N-2)p}{2}}).
\end{equation}

$(ii)$ If $p=\frac{N}{N-4}$, it is easy to check that
\begin{equation}\label{eqn:G2-2}
\begin{aligned}
\varepsilon^{\frac{N}{2}}\int_0^{\frac{2}{\varepsilon}}\frac{r^{N-1}}{(1+r^2)^{\frac{N}{2}}}dr
&=\varepsilon^{\frac{N}{2}}\int_0^1\frac{r^{N-1}}{(1+r^2)^{\frac{N}{2}}}dr
+\varepsilon^{\frac{N}{2}}\int_1^{\frac{2}{\varepsilon}}\frac{r^{N-1}}{(1+r^2)^{\frac{N}{2}}}dr\\
&=\varepsilon^{\frac{N}{2}}\Big(\int_0^1\frac{r^{N-1}}{(1+r^2)^{\frac{N}{2}}}dr
+\frac{N}{2^{\frac{N}{2}}}ln\frac{2}{\varepsilon}\Big).
\end{aligned}
\end{equation}
Taking into account of (\ref{eqn:u-p-p-direct-cal-G1-G2}) and (\ref{eqn:G2-2}), we obtain that
\begin{equation}\label{eqn:0.20}
\|u_{\varepsilon}\|^p_p=:K\varepsilon^{\frac{N}{2}}|ln\varepsilon|+O(\varepsilon^{\frac{N}{2}}).
\end{equation}

$(iii)$ If $2\le p<\frac{N}{N-4}$, we see that
$$
\lim\limits_{\varepsilon\rightarrow 0}\frac{r^{N-1}\psi^p(r)}{(\varepsilon^2+r^2)^{\frac{(N-4)p}{2}}}=\frac{\psi^p(r)}{r^{(N-4)p-(N-1)}} \in L^1([0,2]),
$$
which means that
$$
\begin{aligned}
\|u_{\varepsilon}\|^p_p&=D_N^p\varepsilon^{\frac{(N-4)p}{2}}\omega\int_0^2\frac{r^{N-1}\psi^p(r)}{(\varepsilon^2+r^2)^{\frac{(N-4)p}{2}}}dr\\
&=D_N^p\varepsilon^{\frac{(N-4)p}{2}}\omega\int_0^2\left(\frac{\psi^p(r)}{r^{(N-4)p-(N-1)}}+o(1)\right)dr\\
&=D_N^p\varepsilon^{\frac{(N-4)p}{2}}\omega\int_0^2\frac{\psi^p(r)}{r^{(N-4)p-(N-1)}}dr+o(\varepsilon^{\frac{(N-4)p}{2}})\\
&=:K\varepsilon^{\frac{(N-4)p}{2}}+o(\varepsilon^{\frac{(N-4)p}{2}}).
\end{aligned}
$$
Hence,
\begin{equation}\label{eqn:p-p-estimate-proof}
\|u_\varepsilon\|^p_p=\begin{cases}K \varepsilon^{N-\frac{(N-4)p}{2}}+o\Big(\varepsilon^{N-\frac{(N-4)p}{2}}\Big), &\,\, \mbox{if}\ \  p > \frac{N}{N-4};\\[0.25cm]
 K \varepsilon^\frac{N}{2}|ln \varepsilon|+O\Big(\varepsilon^\frac{N}{2}\Big), &\,\, \mbox{if}\ \   p = \frac{N}{N-4}; \\[0.25cm]
K \varepsilon^\frac{(N-4)p}{2}+o\Big(\varepsilon^{\frac{(N-4)p}{2}}\Big), & \,\,\mbox{if}\ \  p < \frac{N}{N-4}
\end{cases}
\end{equation}
for some constant $K>0$, where $S$ is given in (\ref{eqn:Defn-S}).

In view of the interpolation inequality in \cite[(2.3)]{Bonheure-Casteras-Gou-Jeanjean2019}, for any $u\in H^2(\mathbb R^N)$, one has
\begin{equation}\label{eqn:nabla-delta-2norm}
\|\nabla u\|^2_2\leq \|\Delta u\|_2 \| u\|_2\leq \frac{1 }{2}\Big(\|\Delta u\|^2_2+\|u\|^2_2\Big).
\end{equation}
Combining this with (\ref{eqn:Delta-u-Delta-u-O-S}) and (\ref{eqn:p-p-estimate-proof}), we deduce that
$$
\begin{aligned}
\|\nabla u_\varepsilon\|^2_2&\leq\frac{1}{2}\Big(\|\Delta u_\varepsilon\|^2_2+\|u_\varepsilon\|^2_2\Big)\\
&=\frac{1}{2}S^\frac{N}{4}+O(\varepsilon^{N-4})+\frac{1}{2}\begin{cases}K \varepsilon^{4}+o(\varepsilon^{4}), &\,\, \mbox{if}\ \  N>8;\\[0.25cm]
 K \varepsilon^4|ln \varepsilon|+O(\varepsilon^4), &\,\, \mbox{if}\,\,  N=8; \\[0.25cm]
K \varepsilon^{N-4}+o(\varepsilon^{N-4}), & \,\,\mbox{if}\,\, 5\leq N<8
\end{cases}\\
&=\begin{cases}\frac{1}{2}K \varepsilon^{4}+o(\varepsilon^{4})+\frac{1}{2}S^\frac{N}{4}+O(\varepsilon^{N-4}), &\,\, \mbox{if}\ \  N>8;\\[0.25cm]
\frac{1}{2}K \varepsilon^4|ln \varepsilon|+O(\varepsilon^4)+\frac{1}{2}S^2+O(\varepsilon^{4}), &\,\, \mbox{if}\ \   N=8; \\[0.25cm]
\frac{1}{2}K \varepsilon^{N-4}+o(\varepsilon^{N-4})+\frac{1}{2}S^\frac{N}{4}+O(\varepsilon^{N-4}), & \,\,\mbox{if}\ \  5\leq N<8
\end{cases}\\
&=\begin{cases}\frac{1}{2}S^\frac{N}{4}+O(\varepsilon^{4}), &\,\, \mbox{if}\ \  N>8;\\[0.25cm]
\frac{1}{2}S^2+\frac{1}{2}K \varepsilon^4|ln \varepsilon|+O(\varepsilon^4), &\,\, \mbox{if}\ \   N=8; \\[0.25cm]
\frac{1}{2}S^\frac{N}{4}+O(\varepsilon^{N-4}), & \,\,\mbox{if}\ \  5\leq N<8.
\end{cases}
\end{aligned}
$$

\end{proof}

Thanks to the conclusions in Lemma \ref{lem:bih-eatumate-energy}, the following estimate on $\gamma_\mu(c)$ can be presented.
\begin{lemma}\label{Lem:gamma-mu-estimate}
Under the assumptions of Theorem \ref{Thm:normalized-bScritical-solutions-L2super}, we have
\begin{equation}\label{eqn:gamma-mc-estn}
0<m_r(c)=\inf\limits_{u\in \mathcal P_r(c)}I(u)<\frac{2}{N}S^\frac{N}{4},
\end{equation}
where $S$ is given in (\ref{eqn:Defn-S}).
\end{lemma}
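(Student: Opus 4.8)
The positivity $m_r(c)>0$ has in fact already been secured inside the proof of Lemma \ref{Lem:gamma-tildegamma-equal}: there the Poho\v{z}aev constraint $P(u)=0$, the Gagliardo--Nirenberg inequality (\ref{eqn:GNinequality}) and the fact $p\gamma_p>2$ force $\inf_{u\in\mathcal P_r(c)}(\|\Delta u\|_2^2+\|\nabla u\|_2^2)\ge\xi>0$, whence $I(u)\ge(\tfrac12-\tfrac1{p\gamma_p})\xi>0$ exactly as in (\ref{eqn:Iu-geq-strict-xi-0}). So the entire content of the lemma is the strict upper bound $m_r(c)<\frac2N S^{\frac N4}$.

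My plan is to use the fibering description of the level. By Lemma \ref{Prop:I-u-maximum} each $u\in S_r(c)$ admits a unique dilation landing on $\mathcal P_r(c)$, which is the strict maximum of $s\mapsto I(\mathcal H(u,s))$, so together with (\ref{eqn:gamma-mu-widegamma-mu}) one has $m_r(c)=\inf_{u\in S_r(c)}\max_{s\in\mathbb R}I(\mathcal H(u,s))$. It therefore suffices to exhibit a single competitor. I would take the truncated extremal $u_\varepsilon=\psi U_\varepsilon$ of Lemma \ref{lem:bih-eatumate-energy}, which is radial, and normalise it onto the sphere, setting $v_\varepsilon:=\sqrt c\,u_\varepsilon/\|u_\varepsilon\|_2\in S_r(c)$. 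Writing $t=e^{2s}$ and recalling (\ref{eqn:I-to-0-infty}), the object to estimate is
\[
\max_{t>0}\phi_\varepsilon(t),\qquad \phi_\varepsilon(t)=\frac{t^2}{2}\|\Delta v_\varepsilon\|_2^2+\frac t2\|\nabla v_\varepsilon\|_2^2-\frac{\mu t^{p\gamma_p}}{p}\|v_\varepsilon\|_p^p-\frac{t^{4^*}}{4^*}\|v_\varepsilon\|_{4^*}^{4^*}.
\]

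The purely biharmonic-critical part $\frac{t^2}{2}\|\Delta v_\varepsilon\|_2^2-\frac{t^{4^*}}{4^*}\|v_\varepsilon\|_{4^*}^{4^*}$ is maximised at the explicit value $\frac2N(\|\Delta v_\varepsilon\|_2^2/\|v_\varepsilon\|_{4^*}^2)^{\frac N4}$; the ratio is invariant under the normalisation and, by (\ref{eqn:delta-eatimate}), (\ref{eqn:4*-eatimate}) and (\ref{eqn:Defn-S}), equals $\|\Delta u_\varepsilon\|_2^2/\|u_\varepsilon\|_{4^*}^2=S+O(\varepsilon^{N-4})$, so this part contributes $\frac2N S^{\frac N4}+O(\varepsilon^{N-4})$. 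The decisive difficulty is the dispersion term $\frac t2\|\nabla v_\varepsilon\|_2^2$: it is strictly positive and, after the $L^2$-normalisation, at least of the same order as the critical part, so by itself it tends to raise $\max_t\phi_\varepsilon$ above the threshold — precisely the effect the remark after Theorem \ref{Thm:normalized-bScritical-solutions-L2super} flags as requiring a novel device. A crude subadditive splitting of the maximum is useless here, since it discards the cancellation; instead I would keep the genuine interaction and note that near the maximiser the positive power-one gradient term must be balanced against the negative term $-\frac{\mu}{p}t^{p\gamma_p}\|v_\varepsilon\|_p^p$ of intermediate power $2<p\gamma_p<4^*$, whose coefficient carries the large factor $\mu$. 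Bounding $\|\nabla v_\varepsilon\|_2^2$ via the interpolation inequality (\ref{eqn:nabla-delta-2norm}) together with the sharp orders (\ref{eqn:nabla-eatimate}), and inserting the $\varepsilon$-orders of $\|u_\varepsilon\|_p^p$ from (\ref{eqn:p-estimate}), I would show that for $\varepsilon$ small and $\mu$ large the subcritical contribution strictly outweighs both the gradient contribution and the $O(\varepsilon^{N-4})$ critical remainder, giving $\max_t\phi_\varepsilon(t)<\frac2N S^{\frac N4}$ and hence the claim.

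The heart of the matter is this power bookkeeping. The gradient forces the computation onto the concentration scale, where $\|u_\varepsilon\|_p^p$ must be of sufficiently low order in $\varepsilon$ to dominate the competing positive terms; comparing the exponents delivered by (\ref{eqn:nabla-eatimate}) with the three regimes of (\ref{eqn:p-estimate}) is what isolates the admissible ranges of $p$ dimension by dimension, namely $5\le p<10$ for $N=5$, $4<p<6$ for $N=6$, $\frac72<p<\frac{14}3$ for $N=7$, and the full $\bar{p}<p<4^*$ for $N\ge8$. I would therefore carry out the final estimate as a case analysis in $N$ (with the logarithmic corrections when $N=8$ or $p=\frac N{N-4}$), first fixing $\varepsilon$ to render the critical remainder harmless and then enlarging $\mu$ so that the subcritical term becomes dominant, which produces the strict inequality (\ref{eqn:gamma-mc-estn}) and completes the proof.
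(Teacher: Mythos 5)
Your plan follows the paper's own route step for step: positivity is quoted from Lemma \ref{Lem:gamma-tildegamma-equal}, the competitor is the same normalised bubble $v_\varepsilon=\sqrt{c}\,u_\varepsilon/\|u_\varepsilon\|_2$, the fibre map is split into the pure biharmonic--critical part (whose maximum is $\frac{2}{N}S^{\frac N4}+O(\varepsilon^{N-4})$) plus the contest between $\frac{t}{2}\|\nabla v_\varepsilon\|_2^2$ and $\frac{\mu t^{p\gamma_p}}{p}\|v_\varepsilon\|_p^p$, and the dimension-by-dimension bookkeeping is meant to reproduce the ranges of $p$ in Theorem \ref{Thm:normalized-bScritical-solutions-L2super}. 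The genuine gap is that you never control \emph{where} the fibre maximum occurs, and this is precisely the step on which the paper spends its effort. The two terms you propose to compare enter with different powers of the unknown scale, $t$ versus $t^{p\gamma_p}$ with $p\gamma_p>2$, so that scale does not cancel from their ratio; and since every norm of $v_\varepsilon$ blows up as $\varepsilon\to0$ (because $\|u_\varepsilon\|_2\to0$), the maximising parameter moves with $\varepsilon$ --- indeed the Poho\v{z}aev relation behind (\ref{eqn:e-su-inequality}) ties $e^{4s_{v_\varepsilon}}$ to $\|u_\varepsilon\|_2^2$. Consequently a comparison of the $\varepsilon$-orders of $\|\nabla u_\varepsilon\|_2^2$, $\|u_\varepsilon\|_2$ and $\|u_\varepsilon\|_p^p$, which is all your ``power bookkeeping'' consists of, cannot be converted into the sign statement $\frac{e^{2s_{v_\varepsilon}}}{2}\|\nabla v_\varepsilon\|_2^2<\frac{\mu e^{2p\gamma_p s_{v_\varepsilon}}}{p}\|v_\varepsilon\|_p^p$ until $e^{2s_{v_\varepsilon}}$ has been bounded from above \emph{and} below. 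The paper supplies exactly this ingredient: it first argues that the optimal parameters $s_{v_\varepsilon}$ lie in a fixed compact interval $[s',s'']$, then derives (\ref{eqn:e-su-inequality}) from $P(\mathcal H(v_\varepsilon,s_{v_\varepsilon}))=0$, and finally threads the constants $e^{2s'}$, $e^{2s''}$ through all six cases and through the concluding inequality; nothing in your sketch plays this role, and the phrase ``near the maximiser'' is not a substitute for it. Note also that the tools you cite are too weak at the relevant scale: at the concentration scale $t\sim\|u_\varepsilon\|_2/\sqrt c$ the interpolation inequality (\ref{eqn:nabla-delta-2norm}) only gives a gradient contribution $\frac{\sqrt c\,\|\nabla u_\varepsilon\|_2^2}{2\|u_\varepsilon\|_2}\le\frac{\sqrt c}{2}\|\Delta u_\varepsilon\|_2\approx\frac{\sqrt c}{2}S^{\frac N8}$, a quantity bounded away from zero, while the competing subcritical contribution there is bounded or vanishing; so with (\ref{eqn:nabla-eatimate}) and (\ref{eqn:nabla-delta-2norm}) alone the comparison cannot close.

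A second defect is your order of quantifiers at the end. The lemma, exactly as in the paper, is asserted for every fixed $\mu>0$ under the stated restrictions on $p$ and $N$; you instead propose to ``first fix $\varepsilon$ \dots\ and then enlarge $\mu$''. If one is permitted to take $\mu$ large after the competitor has been fixed, the entire bubble computation becomes superfluous: for \emph{any} fixed $u\in S_r(c)$ one has
\[
\max_{s\in\mathbb R}I(\mathcal H(u,s))\;\le\;\sup_{t>0}\Bigl[\tfrac{t^2}{2}\|\Delta u\|_2^2+\tfrac{t}{2}\|\nabla u\|_2^2-\tfrac{\mu t^{p\gamma_p}}{p}\|u\|_p^p\Bigr]\;\longrightarrow\;0
\qquad\text{as }\mu\to\infty,
\]
so $m_r(c)<\frac{2}{N}S^{\frac N4}$ for large $\mu$ with no case analysis and no restriction on $p$ or $N$ whatsoever. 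That your plan simultaneously insists on the delicate dimension-by-dimension estimates and allows $\mu\to\infty$ at the last step shows its logical structure has not been settled: either $\mu$ is fixed at the outset --- the situation of the lemma, in which case the maximiser control described above is indispensable --- or $\mu$ may be enlarged at the end, in which case everything you set up is unnecessary.
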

\begin{proof}
Owing to Lemma \ref{Lem:gamma-tildegamma-equal}, we only need to show that
$$
m_r(c)<\frac{2}{N}S^\frac{N}{4}.
$$
Define $v_\varepsilon:=\frac{\sqrt{c}u_\varepsilon}{\|u_\varepsilon\|_2}$, then a direct calculation implies that
\begin{equation}\label{eqn:v-varepsilon-u-norm-state}
\|v_\varepsilon\|^2_2=c,\ \ \|\Delta v_\varepsilon\|^2_2=\frac{c}{\|u_\varepsilon\|^2_2}\|\Delta u_\varepsilon\|^2_2,
\ \ \|\nabla v_\varepsilon\|^2_2=\frac{c}{\|u_\varepsilon\|^2_2}\|\nabla u_\varepsilon\|^2_2
\ \ \mbox{and}\ \ \|v_\varepsilon\|^{p}_{p}=\Big(\frac{\sqrt{c}}{\|u_\varepsilon\|_2}\Big)^{p}\|u_\varepsilon\|^{p}_{p}.
\end{equation}
By Lemma \ref{Prop:I-u-maximum}, we find that there exists $s_{v_\varepsilon}$ such that $\mathcal H(v_\varepsilon,s_{v_\varepsilon})\in \mathcal P_r(c)$ and $I(\mathcal H(v_\varepsilon,s_\varepsilon))=\max_{s\in\mathbb R}I(\mathcal H(v_\varepsilon,s))$. Considering the sequence $\{s_{v_\epsilon}\}$, one can observe that, if $s_{v_\epsilon} \to -\infty$ as $\varepsilon \to 0$, then we must have
$$
\begin{aligned}
m_r(c) &\leq I(\mathcal H(v_\varepsilon,s_{v_\varepsilon}))\\
&=\frac{e^{4s_{v_\varepsilon}}}{2}\int_{\mathbb{R}^N}|\Delta v_\varepsilon|^2dx+\frac{e^{2s_{v_\varepsilon}}}{2}\int_{\mathbb{R}^N}|\nabla v_\varepsilon|^2dx-\frac{\mu e^{2p\gamma_p s_{v_\varepsilon}}}{p}\int_{\mathbb{R}^N}|v_\varepsilon|^p dx-\frac{e^{24^* s_{v_\varepsilon}}}{4^*}\int_{\mathbb{R}^N}|v_\varepsilon|^{4^*}dx\\
&\rightarrow 0^{+}\ \ \ \mbox{as}\ \ \ \varepsilon \rightarrow 0,
\end{aligned}
$$
thus, $m_r(c) \leq 0$. Similarly, if $s_{v_\epsilon}\to +\infty$ as $\varepsilon \to 0$, we get that
$$
\begin{aligned}
m_r(c) &\leq I(\mathcal H(v_\varepsilon,s_{v_\varepsilon}))\\
&=\frac{e^{4s_{v_\varepsilon}}}{2}\int_{\mathbb{R}^N}|\Delta v_\varepsilon|^2dx+\frac{e^{2s_{v_\varepsilon}}}{2}\int_{\mathbb{R}^N}|\nabla v_\varepsilon|^2dx-\frac{\mu e^{2p\gamma_p s_{v_\varepsilon}}}{p}\int_{\mathbb{R}^N}|v_\varepsilon|^p dx-\frac{e^{24^* s_{v_\varepsilon}}}{4^*}\int_{\mathbb{R}^N}|v_\varepsilon|^{4^*}dx\\
&\rightarrow -\infty\ \ \ \mbox{as}\ \ \ \varepsilon \rightarrow 0,
\end{aligned}
$$
which means $m_r(c)<0$. In fact, by Lemma \ref{Lem:gamma-tildegamma-equal}, we have $m_r(c)>0$ and so the above cases are not possible, that is, $\{s_{v_\epsilon}\}$ must be bounded. Hence, we know that there exist ${s'},{s''}\in\mathbb R$ independent of $\varepsilon$ such that $s_{v_\epsilon}\in[{s'},{s''}]$ for $\varepsilon>0$ small enough. Let
$$
A(v_\varepsilon):=\|\Delta v_\varepsilon\|^2_2+\frac{1}{2}e^{-2s_{v_\varepsilon}}\|\nabla v_\varepsilon\|^2_2,
$$
then,
$$
A(v_\varepsilon)
=\frac{c}{\|u_\varepsilon\|^2_2}\Big(\|\Delta u_\varepsilon\|^2_2+\frac{1}{2}e^{-2s_{v_\varepsilon}}\|\nabla u_\varepsilon\|^2_2\Big)
=\frac{c}{\|u_\varepsilon\|^2_2}A(u_\varepsilon).
$$
Together this with $P(\mathcal H(v_\varepsilon,s_{v_\varepsilon}))=0$, we derive that
$$
e^{24^* s_{v_\varepsilon}}\|v_\varepsilon\|^{4^*}_{4^*}=e^{4s_{v_\varepsilon}}A(v_\varepsilon)-\mu\gamma_pe^{2p\gamma_p s_{v_\varepsilon}}\|v_\varepsilon\|^p_p.
$$
Therefore,
\begin{equation}\label{eqn:e-su-inequality}
e^{4s_{v_\varepsilon}}\leq \Big(\frac{A(v_\varepsilon)}{\|v_\varepsilon\|^{4^*}_{4^*}}\Big)^\frac{2}{4^*-2}
=\Big(\frac{\|u_\varepsilon\|^{4^*-2}_2 A(u_\varepsilon)}{c^\frac{4^*-2}{2}\|u_\varepsilon\|^{4^*}_{4^*}}\Big)^\frac{2}{4^*-2}
=\frac{\|u_\varepsilon\|^2_2}{c}\Big(\frac{ A(u_\varepsilon)}{\|u_\varepsilon\|^{4^*}_{4^*}}\Big)^\frac{2}{4^*-2}.
\end{equation}
Define
$$
g_{\varepsilon}(s):=\frac{e^{4s}}{2}\|\Delta v_\varepsilon\|^2_2-\frac{e^{24^* s}}{4^*}\|{v_\varepsilon}\|^{4^*}_{4^*}.
$$
Clearly, $g_{\varepsilon}(s)$ has a unique maximum point $s_0$ satisfying
$$
e^{s_0}=\Big(\frac{\|\Delta v_\varepsilon\|^2_2}{\|v_\varepsilon\|^{4^*}_{4^*}}\Big)^\frac{1}{2(4^*-2)}.
$$
Therefore, employing (\ref{eqn:v-varepsilon-u-norm-state}), (\ref{eqn:delta-eatimate}) and (\ref{eqn:4*-eatimate}), we have
$$
\begin{aligned}
g_{\varepsilon}(s_0)&=\frac{1}{2}\Big(\frac{\|\Delta v_\varepsilon\|^2_2}{\|v_\varepsilon\|^{4^*}_{4^*}}\Big)^\frac{2}{4^*-2}\|\Delta v_\varepsilon\|^2_2-\frac{1}{4^*}\Big(\frac{\|\Delta v_\varepsilon\|^2_2}{\|v_\varepsilon\|^{4^*}_{4^*}}\Big)^\frac{4^*}{4^*-2}\|v_\varepsilon\|^{4^*}_{4^*}\\
&=\Big(\frac{1}{2}-\frac{1}{4^*}\Big)\Big(\frac{\|\Delta u_\varepsilon\|^2_2}{\|u_\varepsilon\|^{2}_{4^*}}\Big)^\frac{4^*}{4^*-2}=\frac{2}{N}\Big[\frac{S^\frac{N}{4}+O(\varepsilon^{N-4})}{(S^\frac{N}{4}+O(\varepsilon^{N}))^\frac{2}{4^*}}\Big]^\frac{N}{4}\\
&=\frac{2}{N}\frac{S^\frac{N^2}{16}(1+O(\varepsilon^{N-4}))^\frac{N}{4}}{S^\frac{N(N-4)}{16}(1+O(\varepsilon^{N}))^\frac{N-4}{4}}=\frac{2}{N}S^\frac{N}{4}\frac{(1+O(\varepsilon^{N-4}))^\frac{N}{4}}{(1+O(\varepsilon^{N}))^\frac{N-4}{4}}= \frac{2}{N}S^\frac{N}{4}+O(\varepsilon^{N-4}),
\end{aligned}
$$
which means that
$$
\max_{s\in\mathbb R}g_{\varepsilon}(s)= \frac{2}{N}S^\frac{N}{4}+O(\varepsilon^{N-4}).
$$
Moreover, one has
\begin{equation}\label{eqn:I-H-S-g}
\begin{aligned}
\max_{s\in\mathbb R}I(\mathcal H(v_\varepsilon,s))&=I(\mathcal H(v_\varepsilon,s_{v_\varepsilon}))\\
&=\frac{e^{4s_{v_\varepsilon}}}{2}\|\Delta v_\varepsilon\|^2_2
+\frac{e^{2s_{v_\varepsilon}}}{2}\|\nabla v_\varepsilon\|^2_2
-\frac{\mu e^{2p\gamma_p s_{v_\varepsilon}}}{p}\|v_\varepsilon\|^p_p
-\frac{e^{24^* s_{v_\varepsilon}}}{4^*}\|v_\varepsilon\|^{4^*}_{4^*}\\
&=g_\varepsilon(s_{v_\varepsilon})+\frac{e^{2s_{v_\varepsilon}}}{2}\|\nabla v_\varepsilon\|^2_2
-\frac{\mu e^{2p\gamma_p s_{v_\varepsilon}}}{p}\|v_\varepsilon\|^p_p\\
&\leq \frac{2}{N}S^\frac{N}{4}+O(\varepsilon^{N-4})+\frac{e^{2s_{v_\varepsilon}}}{2}\|\nabla v_\varepsilon\|^2_2
-\frac{\mu e^{2p\gamma_p s_{v_\varepsilon}}}{p}\|v_\varepsilon\|^p_p.
\end{aligned}
\end{equation}
Meanwhile, direct calculations show that
\begin{equation}\label{eqn:leq-0-estimate}
\begin{aligned}
\frac{e^{4s_{v_\varepsilon}}\|\nabla v_\varepsilon\|^2_2}{\|v_\varepsilon\|^{p}_{p}}=\frac{e^{4s_{v_\varepsilon}} \|\nabla u_\varepsilon\|^2_2\|u_\varepsilon\|^{p-2}_2}{c^\frac{p-2}{2}\|u_\varepsilon\|^{p}_{p}}
&\leq \frac{\|u_\varepsilon\|^2_2}{c}\Big(\frac{ A(u_\varepsilon)}{\|u_\varepsilon\|^{4^*}_{4^*}}\Big)^\frac{2}{4^*-2}
\Big(\frac{\|\nabla u_\varepsilon\|^2_2\|u_\varepsilon\|^{p-2}_2}{c^\frac{p-2}{2}\|u_\varepsilon\|^{p}_{p}}\Big)\\
&=\frac{\|u_\varepsilon\|^{p}_2\|\nabla u_\varepsilon\|^2_2(A(u_\varepsilon))^\frac{2}{4^*-2}}{c^\frac{p}{2}
\|u_\varepsilon\|^{p}_{p}(\|u_\varepsilon\|^{4^*}_{4^*})^\frac{2}{4^*-2}}\\
&\leq \frac{\|u_\varepsilon\|^{p}_2\|\nabla u_\varepsilon\|^2_2(\|\Delta u_\varepsilon\|^2_2+\frac{1}{2e^{2s'}}\|\nabla u_\varepsilon\|^2_2)^\frac{2}{4^*-2}}{c^\frac{p}{2}
\|u_\varepsilon\|^{p}_{p}(\|u_\varepsilon\|^{4^*}_{4^*})^\frac{2}{4^*-2}}.
\end{aligned}
\end{equation}
In the following, we discuss the above inequality in six situations:

Case $(i)$: $N=5$ and $p=5>\bar{p}$.
$$
\begin{aligned}
\frac{e^{4s_{v_\varepsilon}}\|\nabla v_\varepsilon\|^2_2}{\|v_\varepsilon\|^{5}_{5}}&\leq \frac{\|u_\varepsilon\|^{5}_2\|\nabla u_\varepsilon\|^2_2(\|\Delta u_\varepsilon\|^2_2+\frac{1}{2e^{2s'}}\|\nabla u_\varepsilon\|^2_2)^\frac{2}{4^*-2}}{c^\frac{5}{2}
\|u_\varepsilon\|^{5}_{5}(\|u_\varepsilon\|^{4^*}_{4^*})^\frac{2}{4^*-2}}\\
&\leq \frac{(K\varepsilon+o(\varepsilon))^\frac{5}{2}(\frac{1}{2}S^\frac{5}{4}+O(\varepsilon))(S^\frac{5}{4}+O(\varepsilon)
+\frac{1}{2e^{2s'}}(\frac{1}{2}S^\frac{5}{4}+O(\varepsilon)))^\frac{2}{4^*-2}}
{c^\frac{5}{2}(K\varepsilon^\frac{5}{2}ln|\varepsilon|+O(\varepsilon^\frac{5}{2}))(S^\frac{5}{4}+O(\varepsilon^5))^\frac{2}{4^*-2}}\\
&\leq \frac{(K\varepsilon+o(\varepsilon))^\frac{5}{2}(K_1S^\frac{5}{4}+O(\varepsilon))^\frac{4^*}{4^*-2}}
{c^\frac{5}{2}(K\varepsilon^\frac{5}{2}ln|\varepsilon|+O(\varepsilon^\frac{5}{2}))(S^\frac{5}{4}+O(\varepsilon^5))^\frac{2}{4^*-2}}\\
&=\frac{\varepsilon^\frac{5}{2}(K+o(1))^\frac{5}{2}(K_1S^\frac{5}{4}+O(\varepsilon))^\frac{4^*}{4^*-2}}
{\varepsilon^\frac{5}{2}c^\frac{5}{2}(Kln|\varepsilon|+O(1))(S^\frac{5}{4}+O(\varepsilon^5))^\frac{2}{4^*-2}}\\
&=\frac{K'_1(1+o(1))^\frac{5}{2}(1+O(\varepsilon))^\frac{4^*}{4^*-2}}
{K''_1(Kln|\varepsilon|+O(1))(1+O(\varepsilon^5))^\frac{2}{4^*-2}}\\
&=\frac{K'_1(1+o(1))(1+O(\varepsilon))}
{K''_1(Kln|\varepsilon|+O(1))(1+O(\varepsilon^5))}\to 0\ \ \mbox{as}\ \ \varepsilon\to 0
\end{aligned}
$$
for positive constants $K$, $K_1$, $K'_1$ and $K''_1$.

Case $(ii)$: $N=5$ and $5<p<10=4^*$.
$$
\begin{aligned}
\frac{e^{4s_{v_\varepsilon}}\|\nabla v_\varepsilon\|^2_2}{\|v_\varepsilon\|^{p}_{p}}&\leq \frac{\|u_\varepsilon\|^{p}_2\|\nabla u_\varepsilon\|^2_2(\|\Delta u_\varepsilon\|^2_2+\frac{1}{2e^{2s'}}\|\nabla u_\varepsilon\|^2_2)^\frac{2}{4^*-2}}{c^\frac{p}{2}
\|u_\varepsilon\|^{p}_{p}(\|u_\varepsilon\|^{4^*}_{4^*})^\frac{2}{4^*-2}}\\
&\leq \frac{(K\varepsilon+o(\varepsilon))^\frac{p}{2}(\frac{1}{2}S^\frac{5}{4}+O(\varepsilon))(S^\frac{5}{4}+O(\varepsilon)
+\frac{1}{2e^{2s'}}(\frac{1}{2}S^\frac{5}{4}+O(\varepsilon)))^\frac{2}{4^*-2}}
{c^\frac{p}{2}(K\varepsilon^{5-\frac{p}{2}}+o(\varepsilon^{5-\frac{p}{2}}))(S^\frac{5}{4}+O(\varepsilon^5))^\frac{2}{4^*-2}}\\
&\leq \frac{(K\varepsilon+o(\varepsilon))^\frac{p}{2}(K_1S^\frac{5}{4}+O(\varepsilon))^\frac{4^*}{4^*-2}}
{c^\frac{p}{2}(K\varepsilon^{5-\frac{p}{2}}+o(\varepsilon^{5-\frac{p}{2}}))(S^\frac{5}{4}+O(\varepsilon^5))^\frac{2}{4^*-2}}\\
&=\frac{\varepsilon^\frac{p}{2}(K+o(1))^\frac{p}{2}(K_1S^\frac{5}{4}+O(\varepsilon))^\frac{4^*}{4^*-2}}
{\varepsilon^{5-\frac{p}{2}}c^\frac{p}{2}(K+o(1))(S^\frac{5}{4}+O(\varepsilon^5))^\frac{2}{4^*-2}}\\
&=\frac{\varepsilon^{p-5}\widetilde{K'_1}(1+o(1))(1+O(\varepsilon))}
{\widetilde{K''_1}(1+o(1))(1+O(\varepsilon^5))}\to 0\ \ \mbox{as}\ \ \varepsilon\to 0
\end{aligned}
$$
for positive constants $K$, $K_1$, $\widetilde{K'_1}$ and $\widetilde{K''_1}$.

Case $(iii)$: $N=6$ and $\bar{p}<4<p<6=4^*$.
$$
\begin{aligned}
\frac{e^{4s_{v_\varepsilon}}\|\nabla v_\varepsilon\|^2_2}{\|v_\varepsilon\|^{p}_{p}}&\leq \frac{\|u_\varepsilon\|^{p}_2\|\nabla u_\varepsilon\|^2_2(\|\Delta u_\varepsilon\|^2_2+\frac{1}{2e^{2s'}}\|\nabla u_\varepsilon\|^2_2)^\frac{2}{4^*-2}}{c^\frac{p}{2}
\|u_\varepsilon\|^{p}_{p}(\|u_\varepsilon\|^{4^*}_{4^*})^\frac{2}{4^*-2}}\\
&\leq \frac{(K\varepsilon+o(\varepsilon))^\frac{p}{2}(\frac{1}{2}S^\frac{3}{2}+O(\varepsilon^2))(S^\frac{3}{2}+O(\varepsilon^2)
+\frac{1}{2e^{2s'}}(\frac{1}{2}S^\frac{3}{2}+O(\varepsilon^2)))^\frac{2}{4^*-2}}
{c^\frac{p}{2}(K\varepsilon^{6-p}+o(\varepsilon^{6-p}))(S^\frac{3}{2}+O(\varepsilon^6))^\frac{2}{4^*-2}}\\
&\leq \frac{(K\varepsilon+o(\varepsilon))^\frac{p}{2}(K_2S^\frac{3}{2}+O(\varepsilon^2))^\frac{4^*}{4^*-2}}
{c^\frac{p}{2}(K\varepsilon^{6-p}+o(\varepsilon^{6-p}))(S^\frac{3}{2}+O(\varepsilon^6))^\frac{2}{4^*-2}}\\
&=\frac{\varepsilon^\frac{p}{2}(K+o(1))^\frac{p}{2}(K_2S^\frac{3}{2}+O(\varepsilon^2))^\frac{4^*}{4^*-2}}
{\varepsilon^{6-p}c^\frac{p}{2}(K+o(1))(S^\frac{3}{2}+O(\varepsilon^6))^\frac{2}{4^*-2}}\\
&=\frac{\varepsilon^{\frac{3p}{2}-6}K'_2(1+o(1))(1+O(\varepsilon^2))}
{K''_2(1+o(1))(1+O(\varepsilon^6))}\to 0\ \ \mbox{as}\ \ \varepsilon\to 0
\end{aligned}
$$
for positive constants $K$, $K_2$, $K'_2$ and $K''_2$.

Case $(iv)$: $N=7$ and $\bar{p}<\frac{7}{2}<p<\frac{14}{3}=4^*$.
$$
\begin{aligned}
\frac{e^{4s_{v_\varepsilon}}\|\nabla v_\varepsilon\|^2_2}{\|v_\varepsilon\|^{p}_{p}}&\leq \frac{\|u_\varepsilon\|^{p}_2\|\nabla u_\varepsilon\|^2_2(\|\Delta u_\varepsilon\|^2_2+\frac{1}{2e^{2s'}}\|\nabla u_\varepsilon\|^2_2)^\frac{2}{4^*-2}}{c^\frac{p}{2}
\|u_\varepsilon\|^{p}_{p}(\|u_\varepsilon\|^{4^*}_{4^*})^\frac{2}{4^*-2}}\\
&\leq \frac{(K\varepsilon+o(\varepsilon))^\frac{p}{2}(\frac{1}{2}S^\frac{7}{4}+O(\varepsilon^3))(S^\frac{7}{4}+O(\varepsilon^3)
+\frac{1}{2e^{2s'}}(\frac{1}{2}S^\frac{7}{4}+O(\varepsilon^3)))^\frac{2}{4^*-2}}
{c^\frac{p}{2}(K\varepsilon^{7-\frac{3p}{2}}+o(\varepsilon^{7-\frac{3p}{2}}))(S^\frac{7}{4}+O(\varepsilon^7))^\frac{2}{4^*-2}}\\
&\leq \frac{(K\varepsilon+o(\varepsilon))^\frac{p}{2}(K_3S^\frac{7}{4}+O(\varepsilon^3))^\frac{4^*}{4^*-2}}
{c^\frac{p}{2}(K\varepsilon^{7-\frac{3p}{2}}+o(\varepsilon^{7-\frac{3p}{2}}))(S^\frac{7}{4}+O(\varepsilon^7))^\frac{2}{4^*-2}}\\
&=\frac{\varepsilon^\frac{p}{2}(K+o(1))^\frac{p}{2}(K_3S^\frac{7}{4}+O(\varepsilon^3))^\frac{4^*}{4^*-2}}
{\varepsilon^{7-\frac{3p}{2}} c^\frac{p}{2}(K+o(1))(S^\frac{7}{4}+O(\varepsilon^7))^\frac{2}{4^*-2}}\\
&=\frac{\varepsilon^{2p-7}K'_3(1+o(1))(1+O(\varepsilon^3))}
{K''_3(1+o(1))(1+O(\varepsilon^7))}\to 0\ \ \mbox{as}\ \ \varepsilon\to 0
\end{aligned}
$$
for positive constants $K$, $K_3$, $K'_3$ and $K''_3$.

Case $(v)$: $N=8$ and $\bar{p}=3<p<4=4^*$.
$$
\begin{aligned}
\frac{e^{4s_{v_\varepsilon}}\|\nabla v_\varepsilon\|^2_2}{\|v_\varepsilon\|^{p}_{p}}&\leq \frac{\|u_\varepsilon\|^{p}_2\|\nabla u_\varepsilon\|^2_2(\|\Delta u_\varepsilon\|^2_2+\frac{1}{2e^{2s'}}\|\nabla u_\varepsilon\|^2_2)^\frac{2}{4^*-2}}{c^\frac{p}{2}
\|u_\varepsilon\|^{p}_{p}(\|u_\varepsilon\|^{4^*}_{4^*})^\frac{2}{4^*-2}}\\
&\leq \frac{(K\varepsilon^4|ln\varepsilon|+O(\varepsilon^4))^\frac{p}{2}(\frac{1}{2}S^2+\frac{1}{2}K\varepsilon^4|ln\varepsilon|
+O(\varepsilon^4))}
{c^\frac{p}{2}(K\varepsilon^{8-2p}+o(\varepsilon^{8-2p}))}\\
&\ \ \ \cdot\frac{(S^2+O(\varepsilon^4)
+\frac{1}{2e^{2s'}}(\frac{1}{2}S^2+\frac{1}{2}K\varepsilon^4|ln\varepsilon|
+O(\varepsilon^4)))^\frac{2}{4^*-2}}{(S^2+O(\varepsilon^8))^\frac{2}{4^*-2}}\\
&\leq \frac{(K\varepsilon^4|ln\varepsilon|+O(\varepsilon^4))^\frac{p}{2}(K_4S^2+K_4\varepsilon^4|ln\varepsilon|
+O(\varepsilon^4))^\frac{4^*}{4^*-2}}
{c^\frac{p}{2}(K\varepsilon^{8-2p}+o(\varepsilon^{8-2p}))(S^2+O(\varepsilon^8))^\frac{2}{4^*-2}}\\
&=\frac{\varepsilon^{2p}(K|ln\varepsilon|+O(1))^\frac{p}{2}(K_4S^2+K_4\varepsilon^4|ln\varepsilon|
+O(\varepsilon^4))^\frac{4^*}{4^*-2}}
{\varepsilon^{8-2p}c^\frac{p}{2}(K+o(1))(S^2+O(\varepsilon^8))^\frac{2}{4^*-2}}\\
&=\frac{\varepsilon^{4p-8}(K|ln\varepsilon|+O(1))^\frac{p}{2}(K_4S^2+K_4\varepsilon^4|ln\varepsilon|
+O(\varepsilon^4))^\frac{4^*}{4^*-2}}
{c^\frac{p}{2}(K+o(1))(S^2+O(\varepsilon^8))^\frac{2}{4^*-2}}\to 0 \ \ \mbox{as}\ \ \varepsilon\to 0
\end{aligned}
$$
for positive constants $K$, $K_4$, $K'_4$ and $K''_4$.

Case $(vi)$: $N\geq 9$ and $\bar{p}<p<4^*$.
$$
\begin{aligned}
\frac{e^{4s_{v_\varepsilon}}\|\nabla v_\varepsilon\|^2_2}{\|v_\varepsilon\|^{p}_{p}}&\leq \frac{\|u_\varepsilon\|^{p}_2\|\nabla u_\varepsilon\|^2_2(\|\Delta u_\varepsilon\|^2_2+\frac{1}{2e^{2s'}}\|\nabla u_\varepsilon\|^2_2)^\frac{2}{4^*-2}}{c^\frac{p}{2}
\|u_\varepsilon\|^{p}_{p}(\|u_\varepsilon\|^{4^*}_{4^*})^\frac{2}{4^*-2}}\\
&\leq \frac{(K\varepsilon^{4}+O(\varepsilon^{4}))^\frac{p}{2}
(\frac{1}{2}S^\frac{N}{4}+O(\varepsilon^4))(S^\frac{N}{4}+O(\varepsilon^{N-4})
+\frac{1}{2e^{2s'}}(\frac{1}{2}S^\frac{N}{4}+O(\varepsilon^4)))^\frac{2}{4^*-2}}
{c^\frac{p}{2}(K\varepsilon^{N-\frac{(N-4)p}{2}}+o(\varepsilon^{N-\frac{(N-4)p}{2}}))(S^\frac{N}{4}+O(\varepsilon^N))^\frac{2}{4^*-2}}\\
&\leq \frac{(K\varepsilon^{4}+O(\varepsilon^{4}))^\frac{p}{2}
(K_5S^\frac{N}{4}+O(\varepsilon^4))^\frac{4^*}{4^*-2}}
{c^\frac{p}{2}(K\varepsilon^{N-\frac{(N-4)p}{2}}+o(\varepsilon^{N-\frac{(N-4)p}{2}}))(S^\frac{N}{4}+O(\varepsilon^N))^\frac{2}{4^*-2}}\\
&= \frac{\varepsilon^{2p}K'_5(1+O(1))
(1+O(\varepsilon^4))}
{\varepsilon^{N-\frac{(N-4)p}{2}}K''_5(1+o(1))(1+O(\varepsilon^N))}\\
&= \frac{\varepsilon^\frac{(p-2)N}{2}K'_5(1+O(1))
(1+O(\varepsilon^4))}
{K''_5(1+o(1))(1+O(\varepsilon^N))}\to 0\ \ \mbox{as}\ \ \varepsilon\to 0
\end{aligned}
$$
for positive constants $K$, $K_5$, $K'_5$ and $K''_5$.

Hence, by virtue of cases $(i)$-$(vi)$ and the fact that $e^{s_{v_\epsilon}}\in[e^{s'},e^{s''}]$, there holds $\frac{\|\nabla v_\varepsilon\|^2_2}{\|v_\varepsilon\|^{p}_{p}}\to 0$ as $\varepsilon\to 0$. Thus, we can find $\varepsilon_0>0$ small enough such that
$$
\frac{e^{2s_{v_\varepsilon}}}{2}\|\nabla v_\varepsilon\|^2_2-\frac{\mu e^{2p\gamma_p s_{v_\varepsilon}}}{p}\|v_\varepsilon\|^p_p<0
$$
for $0<\varepsilon<\varepsilon_0$. Together this with (\ref{eqn:I-H-S-g}) and Lemma \ref{Lem:gamma-tildegamma-equal}, we conclude that $0<m_r(c)<\frac{2}{N}S^\frac{N}{4}$.

\end{proof}

\textbf{Proof of Theorem \ref{Thm:normalized-bScritical-solutions-L2super}.} Lemmas \ref{Lem:PS-true} and \ref{Lem:gamma-mu-estimate} ensure the existence of a \emph{(PS)} sequence $\{u_n\}\subset S_r(c)$ for $I|_{S_r(c)}$ at the level $m_r(c)\in(0,\frac{2}{N}S^\frac{N}{4})$ and $P(u_n)\to 0$ as $n\to\infty$. Thus, one of the two alternatives in Lemma \ref{pro:solution-either-or-true} holds. Suppose that $(i)$ of Lemma \ref{pro:solution-either-or-true} occurs, that is, up to a subsequence, there exists $u_{\mu,c} \in H^2_r(\mathbb{R}^N)$ such that $u_n \rightharpoonup u_{\mu,c} \neq 0$ in $H^2_r(\mathbb{R}^N)$ and $u_{\mu,c}$ solves (\ref{eqn:BS-equation-L2-Super+Critical}) for some $\lambda_{\mu,c}\in\mathbb R$. Meanwhile, we also have
\begin{equation}\label{eqn:Iu-geq-0}
I(u_{\mu,c}) \leq m_r(c)-\frac{2}{N}S^\frac{N}{4}<0.
\end{equation}
However, since $P(u_{\mu,c})=0$ and $u_{\mu,c} \neq 0$, for $\bar{p}< p<4^*$, it gives that
$$
\begin{aligned}
I(u_{\mu,c})&=\frac{1}{2}\|\Delta u_{\mu,c}\|^2_2+\frac{1}{2}\|\nabla u_{\mu,c}\|^2_2-\frac{\mu}{p}\|u_{\mu,c}\|^p_p-\frac{1}{4^*}\|u_{\mu,c}\|^{4^*}_{4^*}\\
&=\frac{1}{4}\|\nabla u_{\mu,c}\|^2_2+\Big(\frac{\gamma_p}{2}-\frac{1}{p}\Big)\mu \|u_{\mu,c}\|^p_p+\Big(\frac{1}{2}-\frac{1}{ 4^*}\Big) \|u_{\mu,c}\|^{4^*}_{4^*}>0,
\end{aligned}
$$
an obvious contradiction with (\ref{eqn:Iu-geq-0}). Then, $(ii)$ of Lemma \ref{pro:solution-either-or-true} holds true, in other words, $u_n \rightarrow u_{\mu,c}$ in $H^2_r(\mathbb{R}^N)$ and $u_{\mu,c}$ is a normalized solution to (\ref{eqn:BS-equation-L2-Super+Critical}) for $\lambda_{\mu,c}<0$ with $I(u_{\mu,c})=m_r(c)$ when $\mu>0$ is large enough.

In what follows, we demonstrate that $u_{\mu,c}$ is non-negative. Indeed, since the functional $I$ is even, the continuous path $g_n(t)=((g_n)_1(t),0)$ used in Lemma \ref{Lem:PS-true}-\emph{(iv)} to verify the inequality
$$
\min\limits_{t\in[0,1]}\|(v_n,s_n)-g_n(t)\|^2_E\leq\frac{1}{n}\ \mbox{for all}\ n\in \mathbb N
$$
can be fixed with $(g_n)_1(t)\geq0$ for all $t\in[0,1]$. From this fact, it follows that there exists $t_n\in[0,1]$ such that $\nu_n:=(g_n)_1(t_n)\geq0$ and
$$
\|v_n-\nu_n\|^2_{H^2}+|s_n|^2_{\mathbb{R}}=\min\limits_{t\in[0,1]}\|(v_n,s_n)-g_n(t)\|^2_E\leq\frac{1}{n}\ \mbox{for all}\ n\in \mathbb N.
$$
As a consequence, we infer that
$$
\|\Delta \mathcal H(v_n,s_n)-\Delta \mathcal H(\nu_n,s_n)\|^2_2=e^{4s_n}\|\Delta v_n-\Delta \nu_n\|^2_2\leq\frac{e^{4s_n}}{n},
$$
$$
\|\nabla \mathcal H(v_n,s_n)-\nabla \mathcal H(\nu_n,s_n)\|^2_2=e^{2s_n}\|\nabla v_n-\nabla \nu_n\|^2_2\leq\frac{e^{2s_n}}{n}
$$
and
$$
\|\mathcal H(v_n,s_n)-\mathcal H(\nu_n,s_n)\|^2_2=\|v_n-\nu_n\|^2_2\leq\frac{1}{n},
$$
which state that $\|u_n-\mathcal H(\nu_n,s_n)\|_{H^2}\!\to \!0$ as $n\!\to\! \infty.$ As
$$
\mathcal H(\nu_n,s_n)(x)=e^\frac{Ns_n}{2}\nu_n(e^{s_n}x)\geq0
$$ and $u_n\to u_{\mu,c}$ in
$H^2_r(\mathbb{R}^N)$, up to a subsequence, we must have $u_{\mu,c}\geq0$ for almost every $x\in\mathbb{R}^N$.
\qed
\\

\noindent\textbf{Funding}
H.R. Sun was partially supported by the NSFC (Grant No. 11671181) and NSF of Gansu Province of China (Grant No. 21JR7RA535, 24JRRA414). Z.H. Zhang was partially supported by the NSFC (Grant No. 12371402).
\\[-0.1cm]

\noindent \textbf{Data Availability}
The authors declare that data sharing is not applicable to this article as no data sets were generated or analyzed during the current study.
\\[-0.1cm]

\noindent \textbf{Conflict of interest}
On behalf of all authors, the corresponding author states that there is no conflict of interest.

\bibliographystyle{elsarticle-num}

\begin{thebibliography}{99}

\bibitem{Alves-doO2002}C.O. Alves and J. do \'{O},
\textit{Positive solutions of a fourth-order semilinear problem involving critical growth},
Adv. Nonlinear Stud., {\bf2}(2002), 437-458.


\bibitem{Alves-Ji-Miyagaki2021}C.O. Alves, C. Ji and O.H. Miyagaki,
\textit{Multiplicity of normalized solutions for a Schr\"{o}dinger equation with critical growth in $\mathbb R^N$},
Arxiv preprint, (2021), doi:2103.07940v2.


\bibitem{Bartsch-Soave-2019}T. Bartsch and N. Soave,
\textit{Multiple normalized solutions for a competing system of Schr\"{o}dinger equations},
Calc. Var. Partial Differential Equations, {\bf58}(2019), 22.


\bibitem{Bellazzini-Visciglia2010}J. Bellazzini and N. Visciglia,
\textit{On the orbital stability for a class of nonautonomous NLS},
Indiana Univ. Math. J., {\bf59}(2010), 1211-1230.


\bibitem{Berestycki-Lions1983}H. Berestycki and P.L. Lions,
\textit{Nonlinear scalar field equation II},
Arch. Rat. Mech. Anal., {\bf82}(1983), 347-376.


\bibitem{Bonheure-Casteras-Gou-Jeanjean2017}D. Bonheure, J.B. Casteras, T. Gou and L. Jeanjean,
\textit{Strong instability of ground states to a fourth order Schr\"{o}dinger equation},
Int. Math. Res. Notices, {\bf2019}(2017), 5299-5315.


\bibitem{Bonheure-Casteras-Gou-Jeanjean2019}D. Bonheure, J.B. Casteras, T. Gou and L. Jeanjean,
\textit{Normalized solutions to the mixed dispersion nonlinear Schr\"{o}dinger equation in the mass critical and supercritical regime},
Trans. Amer. Math. Soc., {\bf372}(2019), 2167-2212.


\bibitem{Bonheure-Casteras-Santos-Nascimento2018}D. Bonheure, J.B. Casteras, E. dos Santos and R. Nascimento,
\textit{Orbitally stable standing waves of a mixed dispersion nonlinear Schr\"{o}dinger equation},
SIAM J. Math. Anal., {\bf50}(2018), 5027-5071.


\bibitem{Boussaid-Fernandez-Jeanjean2019}N. Boussa\"{\i}d, A.J. Fern\'{a}ndez and L. Jeanjean,
\textit{Some remarks on a minimization problem associated to a fourth order nonlinear Schr\"{o}dinger equation},
ArXiv preprint, (2019), doi:1910.13177v1.


\bibitem{Chabrowski-doO2002}J. Chabrowski and J. do \'{O},
\textit{On some fourth-order semilinear elliptic problems in $\mathbb R^N$},
Nonlinear Anal., {\bf49}(2002), 861-884.


\bibitem{Chang-Hajaiej-Ma-Song2023}X. Chang, H. Hajaiej, Z. Ma and L. Song
\textit{Existence and instability of standing waves for the biharmonic nonlinear Schr\"{o}dinger equation with combined nonlinearities},
Arxiv preprint, (2023), doi:2305.00327v1.


\bibitem{Fernandez-Jeanjean-Mandel-Maris2022}A.J. Fern\'{a}ndez, L. Jeanjean, R. Mandel and M. Maris,
\textit{ Non-homogeneous Gagliardo-Nirenberg inequalities in $\mathbb{R}^N$ and application to a biharmonic non-linear Schr\"{o}dinger equation},
J. Differential Equations, {\bf330}(2022), 1-65.



\bibitem{Fibich-Ilan-Papaniclaou2002}G. Fibich, B. Ilan and G. Papaniclaou,
\textit{Self-focusing fourth order dispersion},
SIAM J. Appl. Math., {\bf62}(2002), 1437-1462.


\bibitem{Ivanov-Kosevich1983}B.A. Ivanov and A.M. Kosevich,
\textit{Stable three-dimensional small-amplitude soliton in magnetic materials},
Sov. J. Low Temp. Phys., {\bf9}(1983), 439-442.



\bibitem{Jeanjean1997}L. Jeanjean,
\textit{Existence of solutions with prescribed norm for semilinear elliptic equations},
Nonlinear Anal., {\bf28}(1997), 1633-1659.


\bibitem{Jeanjean-Lu2019}L. Jeanjean and S.S. Lu,
\textit{Nonradial normalized solutions for nonlinear scalar field equations},
Nonlinearity, {\bf32}(2019), 4942-4966.


\bibitem{Karpman1996}V.I. Karpman,
\textit{Stabilization of soliton instabilities by higher-order dispersion: fourth-order nonlinear Schr\"{o}dinger-type equations},
Phys. Rev. E, {\bf53}(1996), 1336-1339.


\bibitem{Karpman-Shagalov2000}V.I. Karpman and A.G. Shagalov,
\textit{Stability of soliton described by nonlinear Schr\"{o}dinger-type equations with higher-order dispersion},
Phys. D, {\bf144}(2000), 194-210.



\bibitem{Liu-Zhang2023}J. Liu and Z. Zhang,
\textit{Normalized solutions to biharmonic Schr\"{o}dinger equation with critical growth in $\mathbb R^N$},
Comput. Appl. Math., {\bf42}(2023), 276.


\bibitem{Luo-Yang2023}X. Luo and T. Yang,
\textit{Normalized solutions for a fourth-order Schr\"{o}dinger equation with a positive second-order dispersion coefficient},
Sci. China Math., {\bf66}(2023), 1237-1262.


\bibitem{Luo-Zhang2022}H. Luo and Z. Zhang,
\textit{Existence and stability of normalized solutions to the mixed dispersion nonlinear Schr\"{o}dinger equations},
Electron. Res. Arch., {\bf30}(2022), 2871-2898.


\bibitem{Luo-Zheng-Zhu2023}T. Luo, S. Zheng and S. Zhu,
\textit{The Existence and stability of normalized solutions for a bi-harmonic nonlinear Schr\"{o}dinger equation with mixed dispersion},
Acta Math. Sci., {\bf43}(2023), 539-563.


\bibitem{Ma-Chang2022}Z. Ma and X. Chang,
\textit{Normalized ground states of nonlinear biharmonic Schr\"{o}dinger equations with Sobolev critical growth and combined nonlinearities},
Appl. Math. Lett., {\bf22}(2022), 108388.


\bibitem{Ma-Chang-Feng2024}Z. Ma, X. Chang and Z. Feng,
\textit{Normalized ground state of a mixed dispersion nonlinear Schr\"{o}dinger equation with combined power-type nonlinearities},
Electron. J. Differential Equations, {\bf2024}(2024), 29.


\bibitem{Miao-Xu-Zhao2009}C. Miao, G. Xu and L. Zhao,
\textit{Global well-posedness and scattering for the focusing energy-critical nonlinear Schr\"{o}dinger equations of fourth order in the radial case},
J. Differential Equations, {\bf246}(2009), 3715-3749


\bibitem{Nirenberg1959}L. Nirenberg,
\textit{On elliptic partial differential equations},
Ann. Scuola Norm. Sup. Pisa (3), {\bf13}(1959), 115-162.


\bibitem{Pausader2009}B. Pausader,
\textit{The cubic fourth-order Schr\"{o}dinger equation},
J. Funct. Anal., {\bf256}(2009), 2473-2517.


\bibitem{Phan2018}T.V. Phan,
\textit{Blowup for biharmonic Schr\"{o}dinger equation with critical nonlinearity},
Z. Angew. Math. Phys., {\bf69}(2018), 31.


\bibitem{Turitsyn1985}S.K. Turitsyn,
\textit{Three-dimensional dispersion of nonlinearity and stability of multidimensional solitons},
Teoret. Mat. Fiz., {\bf64}(1985), 226-232.


\bibitem{Willem1996}M. Willem, Minimax Theorems, Birkh\"{a}user, Boston, (1996).


\bibitem{Zhang-Zheng2010}J. Zhang and J. Zheng,
\textit{Energy critical fourth-order Schr\"{o}dinger equation with subcritical perturbations},
Nonlinear Anal., {\bf73}(2010), 1004-1014.


\bibitem{Zhang-Wang2024}Z. Zhang and Y. Wang,
\textit{Normalized ground state solutions of the biharmonic Schr\"{o}dinger equation with general mass supercritical nonlinearities},
Appl. Math. Lett., {\bf163}(2024), 109415.


\bibitem{Zhang-Zhang2022}Z. Zhang and Z. Zhang,
\textit{Normalized solutions of mass subcritical Schr\"{o}dinger equations in exterior domains},
Nonlinear Differ. Equ. Appl., {\bf29}(2022), 32.


\bibitem{Zhu-Zhang-Yang2010}S. Zhu, J. Zhang and H. Yang,
\textit{Limiting profile of the blow-up solutions for the fourth-order nonlinear Schr\"{o}dinger equation},
Dyn. Partial. Differ. Equ., {\bf7}(2010), 187-205.


\bibitem{Zou-Schechter2006}W. Zou and M. Schechter, Critical Point Theory and its Applications, Springer New York, New York, (2006).

\end{thebibliography}

\end{document}